\newcommand{\tens}[1]{\otimes_{#1}}
\newcommand{\Iso}{\operatorname{Iso}}
\newcommand{\dist}{\operatorname{dist}}
\newcommand{\rk}{\operatorname{rk}}
\newcommand{\Aff}{\operatorname{Aff}}
\newcommand{\N}{\mathds N}
\newcommand{\Z}{\mathds Z}
\newcommand{\R}{\mathds R}
\newcommand{\GL}{\mathsf{GL}}
\renewcommand{\O}{\mathsf O}
\newcommand{\g}{\mathrm g}
\newcommand{\Ker}{\ker}
\renewcommand{\span}{\operatorname{span}}
\newtheorem{theorem}{Theorem}[]
\newtheorem{lemma}[theorem]{Lemma}
\newtheorem{proposition}[theorem]{Proposition}
\newtheorem{corollary}[theorem]{Corollary}
\newtheorem{definition}[theorem]{Definition}
\newtheorem{claim}[theorem]{Claim}
\newtheorem*{theorem*}{Theorem}
\newtheorem{mainthm}{\sc Theorem}
\theoremstyle{remark}
\newtheorem{remark}[theorem]{Remark}
\numberwithin{equation}{section}
\numberwithin{theorem}{section}
\title[Subspace foliations and collapse of closed flat manifolds]{Subspace foliations and collapse\\ of closed flat manifolds}
\author[R. G. Bettiol]{Renato G. Bettiol}
\address{City University of New York (Lehman College) \newline
\indent Department of Mathematics  \newline
\indent 250 Bedford Park Blvd W\newline
\indent Bronx, NY, 10468, USA }
\email{r.bettiol@lehman.cuny.edu}
\author[A. Derdzinski]{Andrzej Derdzinski}
\address{The Ohio State University \newline
\indent Department of Mathematics  \newline
\indent 231 W.~18th Avenue \newline
\indent Columbus, OH 43210, USA}
\email{andrzej@math.ohio-state.edu}
\author[R. Mossa]{Roberto Mossa}
\address{Universidade de S\~ao Paulo \newline
\indent Departamento de Matem\'atica \newline
\indent Rua do Mat\~ao, 1010 \newline
\indent S\~ao Paulo, SP, 05508-090, Brazil}
\email{robertom@ime.usp.br}
\author[P. Piccione]{Paolo Piccione}
\address{Universidade de S\~ao Paulo \newline
\indent Departamento de Matem\'atica \newline
\indent Rua do Mat\~ao, 1010 \newline
\indent S\~ao Paulo, SP, 05508-090, Brazil}
\email{piccione@ime.usp.br}
\subjclass{22E40, 53C12, 53C15, 53C29, 51F25, 57R18}
\date{\today}
\begin{document}
\begin{abstract}
We study relations between certain totally geodesic foliations of a closed flat manifold and its collapsed Gromov--Hausdorff limits.  Our main results explicitly identify such collapsed limits as flat orbifolds, and provide algebraic and geometric criteria to determine whether they are singular.
\end{abstract}
\maketitle

\section{Introduction}

Any sequence of closed flat $n$-manifolds with bounded diameter is (trivially) precompact in Gromov--Hausdorff topology.
Although the limit of such a (possibly collapsing) sequence is known to be a closed flat orbifold~\cite{BetDerPic2017}, aside from low-dimensional cases, there seems to be no general method available to explicitly identify this Gromov--Hausdorff limit, or to determine whether it is smooth. In the present paper, we use  certain naturally occurring Riemannian foliations of closed flat manifolds, called \emph{subspace foliations}, to provide such methods. This answers a broad question of Fukaya~\cite[Problem 11.1]{fukaya} in the special case of flat manifolds.

It is well known that every closed flat $n$-manifold is of the form $M_\pi=\R^n/\pi$, where $\pi\subset\Iso(\R^n)$ is a Bieberbach group, i.e., a torsion-free crystallographic group. By the classical Bieberbach Theorems~\cite{bieberbach}, see also~\cite{charlap, szczepa-book,Wolf_book}, the maximal abelian subgroup $L_\pi\subset \pi$ is a lattice in $\R^n$, and there is a short exact sequence $0\to L_\pi\to \pi\to H_\pi\to 0$, where $H_\pi\subset \O(n)$ is a finite group identified with the holonomy group of $M_\pi$. Remarkably, this orthogonal $H_\pi$-representation on $\R^n$ is always reducible~\cite{HisSzc91}, i.e., admits proper invariant subspaces $W\subset\R^n$.
Every such $H_\pi$-invariant subspace $W\subset \R^n$ induces a \emph{subspace foliation} $\mathcal F_W$ on~$M_\pi$, whose leaves are the totally geodesic submanifolds
\begin{equation}\label{eq:leaves}
\mathcal F_W(u)=P_\pi(W+u), \quad u\in W^\perp,
\end{equation}
where $P_\pi\colon \R^n\to M_\pi$ is the covering map. These leaves are themselves flat manifolds, and are either all compact or noncompact. For instance, if $W$ is a line with irrational slope in $\R^2$, then the corresponding leaves $\mathcal F_W(u)$ are dense in the $2$-torus $\R^2/\Z^2$, a flat manifold with trivial holonomy.
More generally, the leaves \eqref{eq:leaves} are compact if and only if the subspace $W$ is \emph{$L_\pi$-generated}, i.e., $W=\operatorname{span}_\R (W\cap L_\pi)$, see Proposition~\ref{thm:leavescompact}.
Any $H_\pi$-invariant subspace $W\subset\R^n$ has an \emph{$L_\pi$-closure} $\widehat W$, which is the smallest $L_\pi$-generated subspace of $\R^n$ containing~$W$, see Section~\ref{sec:labases} for details.
In the above example on the $2$-torus, $\widehat W=\R^2$. In general, the $L_\pi$-closure $\widehat W$ of any $H_\pi$-invariant subspace $W$ is also $H_\pi$-invariant, and the corresponding subspace foliation $\mathcal F_{\widehat W}$ is the (foliation) closure of the subspace foliation $\mathcal F_W$, as shown in Propositions~\ref{thm:Lclosgroupinv} and \ref{thm:closureleaves}. Since the foliation $\mathcal F_{\widehat W}$ is Riemannian, i.e., its leaves are equidistant, the leaf space $M_\pi/\mathcal F_{\widehat W}$ has a natural metric structure. Moreover, since $\mathcal F_{\widehat W}$ is hyperpolar, i.e., there is a closed flat submanifold that intersects all leaves orthogonally, it follows that $M_\pi/\mathcal F_{\widehat W}$ is a flat orbifold.

In each dimension $n\in\N$, there are only finitely many closed flat $n$-manifolds (by the Bieberbach Theorems), hence, up to discarding finitely many elements, any convergent Gromov--Hausdorff sequence of flat $n$-manifolds consists of a sequence of flat metrics on a fixed closed flat $n$-manifold $M_\pi$.
Moreover, all flat metrics on $M_\pi$ are obtained by rescaling a given flat metric in the directions tangent to each different subspace foliation $\mathcal F_{W_i}$, provided the $H_\pi$-repre\-sentation has no repeated irreducible summands~\cite[Thm.~B]{BetDerPic2017}. By a standard diagonal argument, any such collapsing Gromov--Hausdorff limit is the same as one obtained collapsing along a single (largest) subspace foliation $\mathcal F_{W}$. 
Note that the orthogonal directions can be kept unchanged, up to replacing non-collapsing directions in the sequence with their limits. Thus, with no loss of generality, we may fix an $H_\pi$-invariant subspace $W$, an arbitrary flat metric $g$ on $M_\pi$, and consider the family of flat metrics $g^s_W$, $s>0$, realizing the collapse of $g$ along the subspace foliation defined by $W$, that is,
\begin{equation}\label{eq:gs}
g^s_W=s^2g|_{T\mathcal F_W}\oplus g|_{T\mathcal F_W^\perp}, \quad s>0.
\end{equation}
The resulting collapsed limit as $s\searrow0$ is explicitly identified in our first main result:

\begin{mainthm}\label{mainthm:A}
The Gromov--Hausdorff limit of the collapsing family of flat manifolds $(M_\pi,g^s_W)$ as $s\searrow0$ is the leaf space $M_\pi/\mathcal F_{\widehat W}$, where $\widehat W$ is the $L_\pi$-closure of~$W$. Moreover, $M_\pi/\mathcal F_{\widehat W}$ is a flat orbifold isometric to the orbit space of the action on $\widehat W^\perp\subset\R^n$ of the crystallographic group given by the image of the homomorphism
\begin{equation*}
\pi\ni(A,v)\longmapsto \big(A\vert_{\widehat W^\perp},P_{\widehat W^\perp}(v)\big)\in\Iso\!\big(\widehat W^\perp\big),
\end{equation*}
where $P_{\widehat W^\perp}\colon \R^n\to \widehat W^\perp$ denotes the orthogonal projection, and $(A,v)\cdot x=Ax+v$.
\end{mainthm}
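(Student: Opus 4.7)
The first step is to establish that $M_\pi/\mathcal F_{\widehat W}$ is isometric to the orbit space $\widehat W^\perp/\pi'$ described in the statement, independently of any Gromov--Hausdorff argument. By Proposition~\ref{thm:Lclosgroupinv}, $\widehat W$ is $H_\pi$-invariant, so each $(A,v) \in \pi$ preserves the orthogonal splitting $\R^n = \widehat W \oplus \widehat W^\perp$; a direct computation confirms that $\rho(A,v) := (A|_{\widehat W^\perp}, P_{\widehat W^\perp}(v))$ defines a group homomorphism $\pi \to \Iso(\widehat W^\perp)$. The image $\pi'$ is crystallographic: its point group is a quotient of the finite group $H_\pi$, and its translational part $P_{\widehat W^\perp}(L_\pi)$ is a full-rank lattice in $\widehat W^\perp$, because $L_\pi \cap \widehat W$ is a full-rank lattice in $\widehat W$ (since $\widehat W$ is $L_\pi$-generated) and the closedness of the leaves of $\mathcal F_{\widehat W}$ (Proposition~\ref{thm:closureleaves}) forces the discreteness of $P_{\widehat W^\perp}(L_\pi)$. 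The isometry $M_\pi/\mathcal F_{\widehat W} \cong \widehat W^\perp/\pi'$ then follows by noting that $\widehat W^\perp \hookrightarrow \R^n \to M_\pi \to M_\pi/\mathcal F_{\widehat W}$ is surjective (every leaf of $\mathcal F_{\widehat W}$ meets $\widehat W^\perp$) with fibers equal to $\pi'$-orbits.

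\textbf{Gromov--Hausdorff convergence via the lifted distance formula.} Lifting to $\R^n$, the metric $\tilde g^s_W = s^2 \tilde g|_W \oplus \tilde g|_{W^\perp}$ yields
\[
d_{\tilde g^s_W}(p,q)^2 = s^2 |P_W(q-p)|^2 + |P_{W^\perp}(q-p)|^2,
\]
so $d_{g^s_W}(P_\pi x, P_\pi y) = \inf_{(A,v)\in\pi} d_{\tilde g^s_W}(x, Ay + v)$. For $x, y \in \widehat W^\perp$, using $Ay \in \widehat W^\perp$ and the further splitting $W^\perp = (\widehat W \cap W^\perp) \oplus \widehat W^\perp$, one finds
\[
d_{\tilde g^s_W}(x, Ay+v)^2 = s^2 |P_W(v)|^2 + |P_{\widehat W \cap W^\perp}(v)|^2 + |\rho(A,v)\cdot y - x|^2.
\]
Replacing $(A,v)$ by $(A, v + \ell)$ for $\ell \in L_\pi \cap \widehat W$ preserves $\rho(A,v)$ but shifts $P_{\widehat W \cap W^\perp}(v)$ by $P_{\widehat W \cap W^\perp}(\ell)$. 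Since $\widehat W$ is the $L_\pi$-closure of $W$, the image $P_{\widehat W \cap W^\perp}(L_\pi \cap \widehat W)$ is dense in $\widehat W \cap W^\perp$ (a consequence of Proposition~\ref{thm:closureleaves}), so $\ell$ can be chosen to make $|P_{\widehat W \cap W^\perp}(v + \ell)|$ arbitrarily small. Letting $s \searrow 0$ kills the first term, and minimizing over $(A,v)$ gives $\lim_{s \searrow 0} d_{g^s_W}(P_\pi x, P_\pi y) = d_{\widehat W^\perp/\pi'}([x],[y])$ for any $x, y \in \widehat W^\perp$. The matching lower bound is immediate: $\widehat W^\perp \subset W^\perp$ implies $d_{\tilde g^s_W}(x, Ay+v) \geq |P_{\widehat W^\perp}(Ay+v-x)| = |\rho(A,v) \cdot y - x|$.

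\textbf{The main obstacle.} The principal difficulty is promoting this pointwise distance convergence to a genuine Gromov--Hausdorff convergence, i.e., producing for each $\epsilon > 0$ an $\epsilon$-isometric, $\epsilon$-dense correspondence valid for all $s$ small enough. The issue is uniformity: the $\ell \in L_\pi \cap \widehat W$ required to achieve $\epsilon$-accuracy depends on $(x,y)$ and may be arbitrarily large, forcing $s$ to shrink faster. I would overcome this by exploiting compactness of a fundamental domain of $\pi'$ in $\widehat W^\perp$: cover it by finitely many $\epsilon$-balls, select one $\ell$ per ball that suffices uniformly, and then choose $s_0(\epsilon)$ to handle the resulting finite collection. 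An analogous compactness argument establishes $\epsilon$-density of the image of $\widehat W^\perp$ in $(M_\pi, g^s_W)$: any point of the universal cover can be moved into a bounded neighborhood of $\widehat W^\perp$ by a suitable $\pi$-translate, with $W$-components rendered small by the rescaling and $\widehat W \cap W^\perp$-components made small via the density of $P_{\widehat W \cap W^\perp}(L_\pi \cap \widehat W)$.
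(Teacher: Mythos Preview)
Your approach is correct and parallels the paper's, but is organized differently. The paper does not compute the lifted distance formula; instead it isolates your uniformity obstacle into a single scalar statement (Theorem~\ref{unifo}): the diameter $d(s)$ of the leaf torus $\widehat W/\widehat L$ under the restricted collapsing metric tends to $0$ as $s\searrow0$. That theorem is proved by exactly the compactness-and-density mechanism you describe---a finite cover of a compact fundamental domain together with the density of $P_{\widehat W\cap W^\perp}(L_\pi\cap\widehat W)$ in $\widehat W\cap W^\perp$---but applied once on the torus $\widehat W/\widehat L$ rather than pointwise for each pair $(x,y)$. With $d(s)$ in hand, a geometric triangle-inequality argument on $M_\pi$ (join the $\mathcal F_{\widehat W}$-leaves through $x$ and $y$ by a shortest geodesic orthogonal to $\widehat W$, then move within each leaf at cost $\le d(s)$) yields the uniform sandwich $\delta(\varPhi(x),\varPhi(y))\le\rho^s(x,y)\le\delta(\varPhi(x),\varPhi(y))+2d(s)$, and Gromov--Hausdorff convergence follows from a one-line lemma (Lemma~\ref{ghcnv}). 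Your explicit distance computation is a legitimate replacement for the triangle-inequality step, though there is a small slip in your uniformity sketch: the lattice element $\ell$ you need depends on $P_{\widehat W\cap W^\perp}(v)$, not directly on the pair $(x,y)$; the finiteness comes rather from the fact that only finitely many $(A',w')\in\pi'$ can realize the infimum when $x,y$ range over a fixed compact set, so only finitely many lifts $(A,v)$ must be handled. The paper's packaging via the single scalar $d(s)$ sidesteps this bookkeeping entirely, and also delivers the $\epsilon$-density of $P_\pi(\widehat W^\perp)$ in $(M_\pi,g^s_W)$ for free, since every point lies on some leaf of $\mathcal F_{\widehat W}$ and is therefore within $d(s)$ of that leaf's intersection with $P_\pi(\widehat W^\perp)$.
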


Clearly, Theorem~\ref{mainthm:A} refines our earlier result \cite[Thm.~A]{BetDerPic2017}. Moreover,
it fits the general framework of collapsing manifolds with bounded curvature, whose foundations were laid by Cheeger and Gromov~\cite{cheeger-gromov1,cheeger-gromov2} and Fukaya~\cite{fukaya1,fukaya3,fukaya2}. Indeed, the collapsing family of metrics \eqref{eq:gs} corresponds to an $F$-structure on $M_\pi$. Nevertheless, results from the above references hold in far too great generality to yield an explicit description of this $F$-structure, and of its collapsed limit. Meanwhile, specializing only to flat manifolds, 
it becomes possible to precisely identify these objects and describe them algebraically in terms of the subspace foliation~$\mathcal F_{\widehat W}$, as above.
In addition, Theorem~\ref{mainthm:A} sheds light on the inverse problem of \emph{flat desingularization}, i.e., that of constructing a collapsing sequence of closed flat manifolds that converges to a prescribed closed flat orbifold.

In light of Theorem~\ref{mainthm:A}, we shall henceforth assume (without loss of generality) that the $H_\pi$-invariant subspace $W\subset\R^n$ is $L_\pi$-generated, up to replacing it with its $L_\pi$-closure $\widehat W$.
Our next main result provides both geometric and algebraic criteria to determine whether collapsing $M_\pi$ along a subspace foliation produces a singular limit space:

\begin{mainthm}\label{mainthm:B}
Let $M_\pi$ be a closed flat manifold, and $W\subset\mathds R^n$ be an $H_\pi$-invariant and $L_\pi$-generated subspace. The following are equivalent:
\begin{enumerate}[\rm (i)]
\item $M_\pi/\mathcal F_W$ is a smooth closed flat manifold, and $M_\pi\to M_\pi/\mathcal F_W$ is a fiber bundle;
\item All leaves of the subspace foliation $\mathcal F_W$ are isometric;
\item The subspace foliation $\mathcal F_W$ contains no exceptional leaves;
\item $P_{W^\perp}(v)\not\in\mathrm{Im}(A-\mathrm{Id})$ for all $(A,v)\in\pi$ with $A\vert_{W^\perp}\ne\mathrm{Id}$.
\end{enumerate}
\end{mainthm}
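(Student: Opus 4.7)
The plan is to prove the cycle (i)$\Rightarrow$(ii)$\Rightarrow$(iii)$\Rightarrow$(iv)$\Rightarrow$(i), with Theorem~\ref{mainthm:A} as the principal tool. Since $W$ is $L_\pi$-generated, Theorem~\ref{mainthm:A} identifies $M_\pi/\mathcal F_W$ with $W^\perp/\Gamma$, where $\Gamma\subset\Iso(W^\perp)$ is the image of $\rho\colon(A,v)\mapsto(A\vert_{W^\perp},P_{W^\perp}(v))$. Because $A$ preserves $W^\perp$, one has $\mathrm{Im}(A-\mathrm{Id})\cap W^\perp=\mathrm{Im}(A\vert_{W^\perp}-\mathrm{Id})$, so condition (iv) is equivalent to the statement that no non-identity element of $\Gamma$ has a fixed point in $W^\perp$, i.e.\ $\Gamma$ acts freely. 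Thus (iv) is equivalent to $W^\perp/\Gamma=M_\pi/\mathcal F_W$ being a smooth closed flat manifold, and (iv)$\Rightarrow$(i) follows because a Riemannian foliation with compact leaves over a smooth leaf space is a locally trivial fiber bundle.

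The implication (i)$\Rightarrow$(ii) is immediate: a fiber bundle over the connected base $M_\pi/\mathcal F_W$ has pairwise diffeomorphic fibers, and a Riemannian submersion with totally geodesic fibers has pairwise isometric fibers. Symmetrically, (iii)$\Rightarrow$(iv) follows by contrapositive: if some $(A,v)\in\pi$ with $A\vert_{W^\perp}\ne\mathrm{Id}$ violates (iv), then $\rho(A,v)\in\Gamma$ has a fixed point $u_0\in W^\perp$, and the $\pi$-stabilizer of $W+u_0$ strictly contains $\ker\rho$, endowing the leaf through $u_0$ with non-trivial holonomy and making it exceptional.

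The core of the argument, and the step I expect to be the main obstacle, is (ii)$\Rightarrow$(iii). For each $u\in W^\perp$ let $\pi_u=\{(A,v)\in\pi:A(W+u)+v=W+u\}=\rho^{-1}(\Gamma_u)$, so that the leaf $\mathcal F_W(u)$ is isometric to $(W+u)/\pi_u$. Writing $K=\ker\rho$, the first task is to verify (using $Au=u$ for $(A,v)\in K$ and $u\in W^\perp$) that $K\subset\pi_u$ for every $u$, and that under the parametrization $W+u\cong W$ via $y+u\leftrightarrow y$ the $K$-action becomes $y\mapsto A\vert_Wy+v$, independent of $u$. Hence every leaf is canonically covered by a common ``generic'' leaf $L_0$, and $\pi_u/K\cong\Gamma_u$ acts on this cover. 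Since $\pi$ is torsion-free and discrete, it acts freely on $\R^n$, and a stabilizer chase then shows that the induced $\Gamma_u$-action on $L_0$ is also free. Consequently, any leaf with $\Gamma_u\ne\{e\}$ has volume $|\Gamma_u|^{-1}\vol(L_0)<\vol(L_0)$, and cannot be isometric to the generic leaf; so (ii) forces $\Gamma_u=\{e\}$ for every $u$, yielding (iii). The most delicate piece is the freeness of the $\Gamma_u$-action on $L_0$, which is what makes the $|\Gamma_u|^{-1}$ volume ratio exact.
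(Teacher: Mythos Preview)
Your argument is essentially correct and closely parallels the paper's, with a different logical organization (a cycle rather than pairwise equivalences) and a clean packaging of the volume argument via the model $L_0 = W/K$ and its quotients $L_0/\Gamma_u$. The paper establishes (ii)$\Leftrightarrow$(iii) via the closest-point-projection covering of Proposition~\ref{thm:leafcovering}, which is the same covering $L_0\to L_0/\Gamma_u$ you describe; and it derives both (i)$\Leftrightarrow$(iii) and (iii)$\Leftrightarrow$(iv) from the characterization of $\pi_W^{\text{sing}}$ in Proposition~\ref{thm:existenceexcept}, which amounts to your observation that (iv) is precisely the freeness of the $\Gamma$-action on $W^\perp$.

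Two points deserve tightening. First, your step (ii)$\Rightarrow$(iii) tacitly assumes that \emph{some} leaf is principal, i.e., that the ``generic leaf'' $L_0$ actually occurs as a leaf; otherwise your volume formula does not preclude all $\Gamma_u$ from having the same nontrivial order. This is exactly where the paper invokes Proposition~\ref{thm:existenceexcept}; you can cite that, or argue directly that the fixed-point sets of nontrivial elements of $\Gamma$ form a countable union of proper affine subspaces of $W^\perp$, so $\Gamma_u=\{e\}$ for generic $u$. Second, your appeals to general Riemannian-foliation theory in (i)$\Rightarrow$(ii) (Hermann-type isometry of totally geodesic fibers) and (iv)$\Rightarrow$(i) (Ehresmann-type local triviality) are valid, but note that the paper deliberately supplies elementary direct arguments here; if self-containment is a goal, you should replace those citations with the explicit constructions in Lemma~\ref{thm:nonexceptisometric} and Proposition~\ref{thm:leafcovering}.
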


The algebraic smoothness criterion given by the equivalence between (i) and (iv) answers a question in~\cite[p.~1250]{BetDerPic2017}.
In the above, an \emph{exceptional leaf} $\mathcal F_W(u)$ is one whose fundamental group is strictly larger than that of some other leaf $\mathcal F_W(u')$, when seen (injected) inside the ambient fundamental group $\pi$, see Definition~\ref{def:exceptionalleaf} for details.
In the context of subspace foliations, this coincides with the standard definition of exceptional leaf in foliation theory (of having nontrivial \emph{leaf holonomy}, cf.~Remarks~\ref{rem:holonomies} and \ref{rem:samedef}). It should be noted that (i), (ii), and (iii) are known to be equivalent for any (regular) Riemannian foliation with totally geodesic leaves, see e.g.~\cite{molino,radeschi}. However, we include them in Theorem~\ref{mainthm:B}, since we shall supply direct proofs of these equivalences, that are more accessible than and independent of the arguments needed to establish them in full generality.
In addition, we also provide an elementary proof of the fact that if one (and hence all) of the equivalent statements in Theorem~\ref{mainthm:B} does not hold, then the set of points in $M_\pi$ that belong to exceptional leaves of $\mathcal F_W$ is meager, see Proposition~\ref{thm:existenceexcept}.

Another interesting question is determining to how many different collapsed limits can a given flat manifold converge. Since all closed flat manifolds $M_\pi$ admit a pair of strongly transverse nontrivial subspace foliations with compact leaves (see Corollary~\ref{thm:flatfoliationstructure}), a natural strategy is to show that collapsing $M_\pi$ along each of these subspace foliations gives rise to different collapsed limits. 
Indeed, we are able to distinguish these collapsed limits by means of an invariant defined in terms of their rational holonomy representation, see Definition~\ref{thm:defisequence}. In particular, combining this invariant with a recent result of Lutowski~\cite{lutowski18} yields the following:

\begin{mainthm}\label{mainthm:C}
Every odd-dimensional closed flat manifold $M_\pi$ admits (at least) two nontrivial collapsing limits $M_\pi/\mathcal F_{W_1}$ and $M_\pi/\mathcal F_{W_2}$ that are not affinely equivalent.
\end{mainthm}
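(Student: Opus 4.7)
The plan is to invoke Corollary~\ref{thm:flatfoliationstructure} to produce two strongly transverse nontrivial subspace foliations $\mathcal F_{W_1}, \mathcal F_{W_2}$ of $M_\pi$ with compact leaves; without loss of generality the underlying $H_\pi$-invariant, $L_\pi$-generated subspaces are orthogonal complements, $W_2=W_1^\perp$. By Theorem~\ref{mainthm:A}, each collapsed limit $M_\pi/\mathcal F_{W_i}$ is a flat orbifold whose orbifold fundamental group is the image of $\pi$ in $\Iso(W_i^\perp)$, a crystallographic group whose rational holonomy representation is (a quotient of) the restriction of the rational $H_\pi$-representation to $W_i^\perp$.

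Since an affine equivalence between flat orbifolds must conjugate their orbifold fundamental groups, and therefore induces an isomorphism of their rational holonomy representations, it suffices to produce a $\Q[H_\pi]$-invariant attached to $W_i^\perp$ that distinguishes the two. The invariant from Definition~\ref{thm:defisequence}---which I expect to encode the ordered sequence of multiplicities of the rational-irreducible constituents of the restricted representation---should serve this purpose, provided one checks two things: (i) that it really is an affine invariant of $M_\pi/\mathcal F_{W_i}$, in particular insensitive to passing from $H_\pi$ to its faithful quotient on $W_i^\perp$ (which merely strips off trivial summands in a canonical way); and (ii) that it genuinely separates $W_1^\perp=W_2$ from $W_2^\perp=W_1$.

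The key input for (ii) is the result of Lutowski in \cite{lutowski18}, which (for our purposes) asserts that for odd $n$, no Bieberbach group $\pi\subset\Iso(\R^n)$ admits a splitting $\R^n=W\oplus W^\perp$ into $H_\pi$-invariant complementary subspaces whose rational $H_\pi$-representations are $\Q[H_\pi]$-isomorphic. Because $W_1,W_2$ are $L_\pi$-generated, they are defined over $\Q$, and Lutowski's theorem applies to force the invariants of $M_\pi/\mathcal F_{W_1}$ and $M_\pi/\mathcal F_{W_2}$ to differ, whence the two collapsed limits cannot be affinely equivalent. The main obstacle---and where I expect the bulk of the technical work---is verifying (i): checking that the invariant really descends to a well-defined affine invariant of the underlying flat orbifold, independently of its presentation as $M_\pi/\mathcal F_W$, so that Lutowski's asymmetry at the level of $\R^n=W_1\oplus W_2$ faithfully propagates to an asymmetry between the two orbifolds themselves.
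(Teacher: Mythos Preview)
Your overall strategy---take complementary $H_\pi$-invariant, $L_\pi$-generated subspaces $W_1,W_2$ and show the two collapsed limits cannot be affinely equivalent---is sound, but the argument is both overcomplicated and mis-attributed.

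The statement you ascribe to Lutowski (``for odd $n$, no splitting $\R^n=W\oplus W^\perp$ into $H_\pi$-invariant pieces with $W\cong W^\perp$ as $\Q[H_\pi]$-modules'') is immediate from dimension: if $W\cong W^\perp$ then $n=2\dim W$ is even. Lutowski's theorem (Theorem~\ref{thm:lutowski}) actually asserts that the rational holonomy has at least two \emph{inequivalent} irreducible summands, which is a different and deeper fact, and is not needed here. In fact you need no representation-theoretic invariant at all: since $\dim W_1+\dim W_2=n$ is odd, the two collapsed limits $M_\pi/\mathcal F_{W_1}\cong W_1^\perp/\pi_1^\perp$ and $M_\pi/\mathcal F_{W_2}\cong W_2^\perp/\pi_2^\perp$ have distinct dimensions $n-\dim W_1\ne n-\dim W_2$, and affinely equivalent flat orbifolds have the same dimension. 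Two minor points: Corollary~\ref{thm:flatfoliationstructure} gives a direct-sum complement $W_2$ of $W_1$, not necessarily the orthogonal complement, so your ``without loss of generality $W_2=W_1^\perp$'' is unjustified (though harmless, since only $\dim W_i$ enters); and your concern (i) is exactly Corollary~\ref{thm:iseqaffequivorb}, which follows directly from Proposition~\ref{thm:affequivflatorbifolds} and requires no further work.

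The paper's route is organized differently: it shows (Lemma~\ref{thm:collapseafew}) that the i-sequence of any collapse along a sum of rational irreducibles is the corresponding subsequence of $i_\pi$, and then observes (Proposition~\ref{thm:existemncenontrivialcollapse2}) that whenever $i_\pi$ is not of the form $(k,k)$ one can extract two distinct nontrivial proper subsequences, yielding two non--affinely-equivalent collapsed limits via Corollary~\ref{thm:iseqaffequivorb}. Theorem~\ref{mainthm:C} then follows because the entries of $i_\pi$ sum to $n$, which is odd. Your argument is in effect the special case where the two subsequences are complementary, in which case the i-sequence comparison degenerates to a comparison of total dimension; the paper's formulation has the advantage of also applying to many even-dimensional $M_\pi$.
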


Aside from its intrinsic geometric relevance, the existence of different collapsed limits of $M_\pi=\R^n/\pi$ enables one to construct different $\pi$-periodic solutions in $\R^n$ to several geometric variational problems. For instance,
this method was used to construct $\pi$-periodic solutions to the Yamabe problem on $S^m\times \R^n$ in \cite{BetPic2016}. 

The paper is organized as follows. In Section~\ref{sec:aux}, we recall basic facts about flat manifolds and flat orbifolds, and prove some auxiliary results. 
Abstract lattice-generated subspaces are studied in Section~\ref{sec:labases}, together with the notion of $L$-closure of a subspace, and their interactions with finite groups of orthogonal transformations.
Some of the results in Sections \ref{sec:aux} and \ref{sec:labases} have also appeared in~\cite[Sec.~4]{derdzinski-piccione}.
Section~\ref{sec:foliation} discusses geometric and algebraic properties of subspace foliations and their leaf spaces.
In Section~\ref{sec:collapse}, we identify the Gromov--Hausdorff limit of a flat manifold as it collapses along a subspace foliation, proving Theorem~\ref{mainthm:A}. Singularities of this collapsed limit and their relation to exceptional leaves are analyzed in Section~\ref{sec:excleaves}, where Theorem~\ref{mainthm:B} is proven. Finally, Section~\ref{sec:existence} contains an abstract criterion for the existence of two distinct collapsed limits, which implies Theorem~\ref{mainthm:C}.

\subsection*{Acknowledgements}
It is our great pleasure to thank Marco Radeschi for useful conversations about foliation theory, and the anonymous referees for their commendable attention to detail and
meticulous suggestions that led to this much improved final version of the paper.

The first-named author was supported by grants from the National Science Foundation (DMS-1904342), PSC-CUNY (Award \# 62074-00 50), and Fapesp (2019/19891-9),
the second- and fourth-named authors were supported by a bilateral grant Fapesp-OSU (2015/50265-6), the third-named author was supported by a grant from Fapesp (2018/08971-9), and the fourth-named author was supported by grants from Fapesp (2016/23746-6, 2019/16286-7).

\section{Preliminaries}\label{sec:aux}
\subsection{Conventions and notations} Throughout this paper, we shall assume:
\begin{enumerate}[\rm (i)]
\item A (full) \emph{lattice} in a finite-dimensional real vector space $V$ is any subgroup $L$ of the additive group of $V$ generated (as a group) by a basis of $V$, which then must also be a $\mathds Z$-basis of $L$. In particular, $L\subset V$ is discrete. If $L'\subset L$ is a subgroup that spans $V$, then $L'$ has finite index in $L$. 
\item Given a subspace $W\subset\mathds R^n$, we denote by $W^\perp$ the orthogonal complement of $W$ relative to the Euclidean inner product, and by $P_W\colon\mathds R^n\to W$ the orthogonal projection onto $W$.
\item We identify elements $(A,v)$ of the affine group $\Aff(\R^n)=\GL(n)\ltimes\mathds R^n$ with the affine isomorphism $\mathds R^n\ni x\mapsto Ax+v\in\mathds R^n$. In particular, given an affine subspace $W+u\subset\mathds R^n$ invariant under the affine map $(A,v)$, we denote by $(A,v)\vert_{W+u}$ the restriction of $(A,v)$ to $W+u$ which \emph{also takes values} in $W+u$.
\end{enumerate}

\subsection{Closed flat manifolds and orbifolds}\label{sub:genflatman}
Denote by $\Aff(\R^n)=\GL(n)\ltimes\R^n$ and $\Iso(\R^n)=\O(n)\ltimes\R^n$ the affine group and the isometry group of $\R^n$, respectively. An \emph{$n$-dimensional crystallographic group} is a  discrete subgroup $\pi$ of $\Iso(\R^n)$ with compact fundamental domain in  $\mathds R^n$, i.e., such that there exists a compact subset of $\mathds R^n$ that intersects every orbit of its action 
\begin{equation}\label{eq:action}
\pi\times\mathds R^n\ni\big((A,v),x\big)\longmapsto Ax+v\in\mathds R^n.
\end{equation}
An \emph{$n$-dimensional Bieberbach group} is a torsion-free $n$-dimensional crystallographic group. Note that a crystallographic group is torsion-free if and only if it acts freely on $\mathds R^n$, see \cite[Thm.~3.1.3]{Wolf_book}. By the Clifford--Klein Theorem, closed $n$-dimensional \emph{flat manifolds} are precisely the orbit spaces $\R^n/\pi$ of the isometric action \eqref{eq:action} of $n$-dimensional Bieberbach groups $\pi$. Similarly, $n$-dimensional compact \emph{flat orbifolds} are precisely the orbit spaces $\R^n/\pi$ of the isometric action \eqref{eq:action} of $n$-dimensional crystallographic groups $\pi$, see e.g.~\cite[p.~1251]{BetDerPic2017}.

As discussed in the Introduction, from the Bieberbach theorems, see e.g.~\cite{BetDerPic2017, charlap, szczepa-book, Wolf_book,bieberbach}, if $\pi\subset\Iso(\mathds R^n)$ is a Bieberbach group, then $\pi$ has a maximal normal abelian subgroup $L_\pi$ of finite index, which is a lattice in $\mathds R^n$, and $0\to L_\pi\to\pi\to H_\pi\to0$ is a short exact sequence. The finite group $H_\pi\subset \O(n)$ is identified with the holonomy group of $M_\pi=\mathds R^n/\pi$, and the inclusion $H_\pi\hookrightarrow \O(n)$ is (identified with) its holonomy representation~\cite[Thm.~3.4.5]{Wolf_book}.
Moreover, $L_\pi$ is $H_\pi$-invariant, since $L_\pi$ is normal in $\pi$. 
It also follows from the Bieberbach Theorems that (the isomorphism class of) the holonomy group of $(M_\pi,g)$ does not depend on the choice of flat metric $g$ on~$M_\pi$.

\begin{remark}\label{thm:remconjugate}
By the Bieberbach theorems, isomorphic crystallographic subgroups $\pi_1,\pi_2\subset\Iso(\mathds R^n)$ are \emph{conjugate} in $\Aff(\R^n)$, i.e., there exists $(B,v)\in\Aff(\R^n)$ such that $(B,v)\pi_1(B^{-1},-B^{-1}v)=\pi_2$. Denoting respectively by $L_{\pi_i}$ and $H_{\pi_i}$, $i=1,2$, the lattice and holonomy of $\pi_i$, we have $L_{\pi_2}=B(L_{\pi_1})$ and $BH_{\pi_1}B^{-1}=H_{\pi_2}$. 
\end{remark}

\subsection{Covering torus}\label{subsec:covtorus}
The quotient $\mathds R^n/L_\pi$, which is an $n$-torus, carries a free isometric $H_\pi$-action, whose quotient map is a $k$-sheeted Riemannian covering map $\mathds R^n/L_\pi\to M_\pi$. 
In order to describe this $H_\pi$-action on $\mathds R^n/L_\pi$ via deck transformations, note that for all $A\in H_\pi$, there exists $v\in\mathds R^n$ such that $(A,v)\in\pi$, and $v$ is unique up to elements of $L_\pi$, so the map
\begin{equation}\label{eq:idcohomol}
H_\pi\ni A\longmapsto \overline v_A\in\mathds R^n/L_\pi
\end{equation}
is well-defined, and $(A,v)\in\pi$ if and only if $v\in\overline v_A$. 
For $A\in H_\pi$, denote by $\overline A\colon\mathds R^n/L_\pi\to\mathds R^n/L_\pi$ the corresponding linear isometry of the torus $\R^n/L_\pi$.
The free isometric action of $H_\pi$ on $\mathds R^n/L_\pi$ is given by:
\begin{equation}\label{eq:holactiontorus}
\phantom{,\quad A\in H_\pi,\ x\in\mathds R^n/L_\pi.}(A,x)\longmapsto \overline Ax+\overline v_A,\quad A\in H_\pi,\ x\in\mathds R^n/L_\pi.
\end{equation}
Moreover, \eqref{eq:idcohomol} satisfies $\overline v_{AB}=\overline A\overline v_B+\overline v_A$ and $\overline v_{A^{-1}}=-\overline A^{-1}\overline v_A$, for all $A,B\in H_\pi$.

\subsection{Holonomy invariant subspaces}
For all $A\in H_\pi$, one has $\Ker(A-\mathrm{Id})\ne\{0\}$. 
Indeed, if $k\in\mathds N$ is the order of $A$ and $(A,v)\in\pi$, then 
\begin{equation*}
(A,v)^k = \big(A^k,(\mathrm{Id}+A+\ldots+A^{k-1})v\big).
\end{equation*}
Since $\pi$ is torsion-free, $u=(\mathrm{Id}+A+\ldots+A^{k-1})v\neq 0$ and clearly $u\in\Ker(A-\mathrm{Id})$.
Moreover, by orthogonality, one has:
\begin{equation}\label{eq:orthimker}
\Ker(A-\mathrm{Id})^\perp=\mathrm{Im}(A-\mathrm{Id}).
\end{equation}
Restricting $\mathrm{Id}+A+\ldots+A^{k-1}$ to each summand in $\R^n=\Ker(A-\mathrm{Id})\oplus \mathrm{Im}(A-\mathrm{Id})$, we see that 
$
\mathrm{Id}+A+\ldots+A^{k-1}=k\, P_{\Ker(A-\mathrm{Id})}.
$
In particular, if $A\in H_\pi$ commutes with every other element of $H_\pi$, then $W=\ker(A-\mathrm{Id})$ is a nontrivial $H_\pi$-invariant subspace of $\R^n$.
Remarkably, an invariant subspace always exists, even if $H_\pi$ has trivial center, due to the following result about Bieberbach groups:

\begin{theorem}[Hiss--Szczepa\'nski~\cite{HisSzc91}]\label{thm:hissszczepa91}
Let $\pi\subset\Iso(\R^n)$, $n\geq2$, be any Bieberbach group.  
The rational holonomy representation of $H_\pi$ is not irreducible.
\end{theorem}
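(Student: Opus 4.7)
The plan is to argue by contradiction, assuming that $V_\Q := L_\pi \otimes_\Z \Q$ is an irreducible $\Q[H_\pi]$-module. The case $H_\pi=\{\mathrm{Id}\}$ is immediate, as $V_\Q$ is then the trivial $n$-dimensional representation, which for $n\ge 2$ is reducible; so assume $H_\pi$ nontrivial. The first step is to reformulate torsion-freeness cohomologically: the extension $0 \to L_\pi \to \pi \to H_\pi \to 0$ is classified by $\alpha \in H^2(H_\pi; L_\pi)$, and $\pi$ is torsion-free iff $\alpha|_C \neq 0$ in $H^2(C; L_\pi)$ for every cyclic subgroup $C \subset H_\pi$ of prime order. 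For $C = \langle A\rangle$ of order $p$, the periodicity of cohomology of cyclic groups gives $H^2(C; L_\pi) \cong L_\pi^A / N_A L_\pi$, where $N_A = \mathrm{Id} + A + \ldots + A^{p-1}$; hence torsion-freeness forces $L_\pi^A \neq N_A L_\pi$ for every prime-order element $A \in H_\pi$.

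I would then extract structure from the irreducibility hypothesis. By Schur's lemma, $D := \mathrm{End}_{\Q[H_\pi]}(V_\Q)$ is a finite-dimensional division $\Q$-algebra, so $L_\pi$ becomes a lattice over a maximal order $\mathcal O \subset D$. Since each $A \in H_\pi$ preserves the lattice $L_\pi$, it acts by a $\Z$-linear endomorphism, so $V_\Q^A$ is a $\Q$-rational subspace; the computation in the excerpt showing that $\ker(A-\mathrm{Id})\neq \{0\}$ in $\R^n$ then gives $V_\Q^A \neq 0$ for every nonidentity $A \in H_\pi$. The subspace $W := \sum_{A \in H_\pi \setminus \{\mathrm{Id}\}} V_\Q^A$ is $H_\pi$-invariant, since $g\cdot V_\Q^A = V_\Q^{g A g^{-1}}$, and nonzero, so by irreducibility $W = V_\Q$. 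Combining this with the excerpt's identity $\mathrm{Id}+A+\ldots+A^{p-1} = p \cdot P_{\ker(A-\mathrm{Id})}$ and the $\mathcal O$-module structure of $L_\pi$, the goal is to force $L_\pi^A = N_A L_\pi$ for some $A$ of prime order, contradicting the integral condition obtained in the first step.

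I expect the main obstacle to lie in that last step, namely translating rational irreducibility into an integral coincidence of sublattices. Rational irreducibility of $V_\Q$ is compatible with many non-isomorphic $\Z[H_\pi]$-lattices $L_\pi \subset V_\Q$, and the torsion-freeness criterion depends on the particular integral structure, not merely on $V_\Q$. Following Hiss and Szczepa\'nski, I expect the argument to require a delicate case analysis on the arithmetic type of $D$ (whether $D$ is a number field or a genuine noncommutative division algebra, its Schur index, totally real versus CM, and so on), using ideal-theoretic properties of maximal orders in each case to rule out a torsion-free extension realizing the representation $V_\Q$. This arithmetic part is the conceptual and technical heart of the proof, and it is where a purely representation-theoretic argument alone appears insufficient.
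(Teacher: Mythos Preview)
The paper does not give a proof of this theorem; it is stated with attribution to Hiss and Szczepa\'nski~\cite{HisSzc91} and then used as a black box for later results (e.g., Corollary~\ref{thm:flatfoliationstructure} and the discussion of i-sequences in Section~\ref{sec:existence}). There is thus no proof in the paper to compare your attempt against.

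As a standalone sketch, your outline is incomplete by your own admission. You correctly set up the cohomological criterion for torsion-freeness (nonvanishing of the restriction of the extension class to every prime-order cyclic subgroup, i.e., $L_\pi^A \neq N_A L_\pi$) and you invoke Schur's lemma to obtain the division algebra $D=\mathrm{End}_{\Q[H_\pi]}(V_\Q)$; these are indeed the ingredients one expects. However, the decisive step---showing that rational irreducibility forces $L_\pi^A = N_A L_\pi$ for some prime-order $A$---is stated only as an expectation, not as an argument. Your intermediate observation that $W=\sum_{A\neq\mathrm{Id}} V_\Q^A$ is $H_\pi$-invariant and hence equals $V_\Q$ is correct, but it is a rational statement and does not visibly connect to the integral lattice identity you need; rational irreducibility is compatible with many inequivalent $\Z[H_\pi]$-lattices, as you yourself note, so some additional arithmetic input specific to $D$ and its orders is unavoidable. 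In short, the proposal identifies the right framework and the right obstacle, but does not surmount it.
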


In the above, the \emph{rational holonomy representation} is the $H_\pi$-representation on the rational vector space $L_\pi\tens{\mathds Z}\mathds Q$.
The following generalization of Theorem~\ref{thm:hissszczepa91} has been very recently obtained by Lutowski~\cite{lutowski18}:

\begin{theorem}[Lutowski~\cite{lutowski18}]\label{thm:lutowski}
Let $\pi\subset\Iso(\R^n)$, $n\geq2$, be a Bieberbach group with nontrivial holonomy $H_\pi$.
The rational holonomy representation of $H_\pi$ has at least two inequivalent irreducible subrepresentations.
\end{theorem}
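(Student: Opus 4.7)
The plan is to argue by contradiction, refining the strategy behind Theorem~\ref{thm:hissszczepa91}. Suppose, for contradiction, that the rational holonomy representation $V := L_\pi \otimes_\Z \Q$ is isotypic as a $\Q[H_\pi]$-module, i.e., $V \cong U^k$ for a single irreducible $\Q[H_\pi]$-module $U$ and some integer $k \geq 1$.

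First, I would quickly dispose of the trivial case: if $U$ is the trivial representation, then $H_\pi$ acts trivially on $V$, hence on $L_\pi$, and hence on $\R^n = L_\pi \otimes_\Z \R$, contradicting the faithful inclusion $H_\pi \hookrightarrow \O(n)$ when $H_\pi \neq \{\mathrm{Id}\}$. So $U$ is non-trivial, and Theorem~\ref{thm:hissszczepa91} already rules out the case $k=1$; thus one may assume $k \geq 2$.

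The central tool is the large commutant of an isotypic representation: by Schur's lemma, $\operatorname{End}_{\Q[H_\pi]}(V) \cong M_k(D)$, where $D := \operatorname{End}_{\Q[H_\pi]}(U)$ is a finite-dimensional division algebra over $\Q$, so $V$ is a free $D$-module of rank $k \geq 2$ and $\GL_k(D)$ acts on $V$ by $H_\pi$-equivariant $\Q$-linear automorphisms. After replacing $L_\pi$ by a commensurable $H_\pi$-stable sublattice---which preserves the rational holonomy representation and its isotypic decomposition---one can arrange that an order $\mathcal{O} \subset D$ preserves this sublattice, so that $\GL_k(\mathcal{O})$, and in particular the symmetric group permuting the $k$ copies of $U$, acts by $H_\pi$-equivariant lattice automorphisms. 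I would then translate torsion-freeness of the extension $0 \to L_\pi \to \pi \to H_\pi \to 0$ into the standard cohomological criterion: its classifying class $[\alpha] \in H^2(H_\pi, L_\pi)$ must restrict to a non-zero element of $H^2(\langle A\rangle, L_\pi) = L_\pi^A / N_A L_\pi$ for every non-identity cyclic subgroup $\langle A\rangle \subset H_\pi$, where $N_A = \mathrm{Id} + A + \cdots + A^{\mathrm{ord}(A)-1}$.

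The main obstacle---and the reason this refinement of Hiss--Szczepa\'nski is non-trivial---lies in bridging the rational and integral structures. The isotypic decomposition lives naturally over $\Q$, whereas torsion-freeness is a condition on the finite torsion group $H^2(H_\pi, L_\pi)$, whose rationalization $H^2(H_\pi, V)$ vanishes because $V$ is divisible. The crux is to show that the rich $\GL_k(\mathcal{O})$-symmetry available at the integral level, combined with the $k\geq 2$ isotypic structure over $\Q$, forces at least one cyclic restriction of $[\alpha]$ to be zero, contradicting torsion-freeness. I would expect the detailed analysis to compare the $\mathcal{O}$-module structures of $L_\pi^A$ and $N_A L_\pi$ as $A$ ranges over cyclic subgroups of $H_\pi$, plausibly via a Mackey-style restriction-induction computation that exploits how the isotypic decomposition descends under cyclic restriction, producing the desired vanishing.
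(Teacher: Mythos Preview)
The paper does not prove this theorem: it is quoted as a result of Lutowski~\cite{lutowski18} and used as a black box (see Section~\ref{sec:existence}). There is therefore no proof in the paper to compare your proposal against.

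As to the proposal itself, it is an outline rather than a proof, and it contains a genuine gap. The step ``after replacing $L_\pi$ by a commensurable $H_\pi$-stable sublattice'' is not innocuous: torsion-freeness of $\pi$ is encoded in the specific extension class $[\alpha]\in H^2(H_\pi,L_\pi)$, and neither passing to a finite-index sublattice nor to a finite-index superlattice need preserve the property that $[\alpha]$ restricts nontrivially to every cyclic subgroup. Concretely, for a sublattice $L'\subset L_\pi$ the map $H^2(H_\pi,L')\to H^2(H_\pi,L_\pi)$ is not surjective in general, so $[\alpha]$ may fail to lift; for a superlattice the pushforward may kill $[\alpha]$. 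Thus the very hypothesis you need later (nonvanishing on all cyclic subgroups) can be lost at this step. Moreover, the final paragraph explicitly defers the heart of the matter---producing a cyclic subgroup on which the restriction vanishes---to an ``expected'' Mackey-style computation that is not carried out. Since that is precisely the nontrivial content of Lutowski's theorem beyond Hiss--Szczepa\'nski, the proposal does not close the gap it identifies.
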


Some geometric consequences of Theorem~\ref{thm:lutowski} are discussed in Section~\ref{sec:existence}.

\subsection{Affine equivalences of compact flat orbifolds}\label{sub:affineequiorb}
Recall that two compact $n$-dimensional flat orbifolds are \emph{affinely equivalent} if the corresponding crystallographic groups are conjugate in $\Aff(\R^n)$.
The following statement, which is useful in the sequel, is a consequence of a more general algebraic result~\cite[Thm.~III.2.2]{charlap}.

\begin{proposition}\label{thm:affequivflatorbifolds}
For $i=1,2$, let $E_i\cong\R^n$ be Euclidean spaces, $\pi^i\subset\Iso(E_i)$ a crystallographic group with associated short exact sequence
\[0\longrightarrow L^i\longrightarrow\pi^i\longrightarrow H^{(i)}\longrightarrow1,\]
where $L^i$ is a lattice in $E_i$, and $H^{(i)}\subset\O(E_i)$.
If the corresponding compact flat orbifolds $\mathcal O_1=E_1/\pi^1$ and $\mathcal O_2=E_2/\pi^2$ are affinely equivalent, then the rational holonomy representations of $\mathcal O_1$ and of $\mathcal O_2$ are equivalent, i.e., there exists an isomorphism of $\mathds Q$-vector spaces $T\colon L^1\otimes\mathds Q\to L^2\otimes\mathds Q$ such that $H^{(2)}=TH^{(1)}T^{-1}$.
\end{proposition}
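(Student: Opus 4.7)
The plan is to extract the desired $\Q$-vector space isomorphism $T$ directly from the affine conjugation witnessing the equivalence of $\mathcal{O}_1$ and $\mathcal{O}_2$. By hypothesis, after fixing isomorphisms $E_i\cong\R^n$, there exists $(B,v)\in\Aff(\R^n)$ such that $(B,v)\,\pi^1(B^{-1},-B^{-1}v)=\pi^2$. Expanding the conjugation $(B,v)(A,w)(B,v)^{-1}=(BAB^{-1},\,Bw-BAB^{-1}v+v)$ shows that the linear part of every element of $\pi^2$ is the $B$-conjugate of the linear part of the corresponding element of $\pi^1$. Passing to the holonomy quotients yields $H^{(2)}=BH^{(1)}B^{-1}$. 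Moreover, applied to a pure translation $(\mathrm{Id},w)$, the conjugation produces $(\mathrm{Id},Bw)$, so the restriction of $B$ to $L^1$ is a group isomorphism onto $L^2$; these are exactly the assertions recorded in Remark~\ref{thm:remconjugate}.

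Next, I would define $T\colon L^1\tens{\Z}\Q\to L^2\tens{\Z}\Q$ as the $\Q$-linear extension of $B|_{L^1}\colon L^1\to L^2$. Since $B|_{L^1}$ is a $\Z$-module isomorphism, $T$ is a well-defined isomorphism of $\Q$-vector spaces. It remains to verify that $T$ intertwines the rational holonomy representations. For $A\in H^{(1)}$, its action on $L^1\tens{}\Q$ is the restriction of the linear map $A\in\O(E_1)$, and the element of $H^{(2)}$ corresponding to it under the identification $H^{(2)}=BH^{(1)}B^{-1}$ is $A'=BAB^{-1}$. Then, for any $x\in L^1\tens{}\Q$,
\[
T(A\cdot x)=B(Ax)=(BAB^{-1})(Bx)=A'\cdot T(x),
\]
so $TAT^{-1}=A'$ for every $A\in H^{(1)}$, and hence $H^{(2)}=TH^{(1)}T^{-1}$, as required.

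The argument is short and presents no serious obstacle: the main step is the bookkeeping of the identifications $E_1\cong\R^n\cong E_2$ and the careful passage from the affine conjugation to the induced actions on the lattices and on the holonomy. Once these are made explicit, the conclusion is immediate, and the proposition becomes essentially a reformulation of Remark~\ref{thm:remconjugate} tailored to the language of rational holonomy representations, rather than a genuine appeal to the deeper cohomological content of Charlap's Theorem~III.2.2.
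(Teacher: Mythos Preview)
Your proof is correct and follows essentially the same approach as the paper's: both transport $\pi^i$ to $\Iso(\R^n)$ via identifications $E_i\cong\R^n$, use the affine conjugacy $(B,v)$ to obtain $B(L^1)=L^2$ and $H^{(2)}=BH^{(1)}B^{-1}$ (the content of Remark~\ref{thm:remconjugate}), and then take $T$ to be the $\Q$-linear extension of the resulting lattice isomorphism. Your write-up simply makes the intertwining verification and the conjugation formula explicit, whereas the paper leaves these to the reader.
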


\begin{proof}
Identify the lattices $L^i$ with subgroups of $\pi^i$. Set $n=\dim E_1=\dim E_2$, choose isometries $I_i\colon E_i\to\mathds R^n$, and set $\widetilde\pi_i=I_i\pi_iI_i^{-1}$, $i=1,2$. The orbifolds $\widetilde{\mathcal{O}_i}:=\mathds R^n/\widetilde\pi_i\cong\mathcal O_i$ are affinely equivalent, i.e., there exists $(B,v)\in\Aff(\R^n)$ such that $(B,v)\widetilde\pi_1(B^{-1},-B^{-1}v)=\widetilde\pi_2$.  The desired map $T$ is induced by the group isomorphism 
$I_2^{-1}BI_1\colon L^1\to L^2$, 
see Remark~\ref{thm:remconjugate}.
\end{proof}

In particular, Proposition~\ref{thm:affequivflatorbifolds} implies that a subspace $V_1\subset L^1\otimes\mathds Q$ is $H^{(1)}$-invariant if and only if $T(V_1)$ is $H^{(2)}$-invariant. Similarly, if $V_1$ is $H^{(1)}$-invariant, then $V_1$ is irreducible if and only if $T(V_1)$ is irreducible, motivating the following:

\begin{definition}\label{thm:defisequence}
Given a completely reducible representation $\rho\colon H\to\GL(V)$ of a group $H$ on a finite-dimensional vector space $V$ (over any field), the \emph{i-sequence} of $\rho$ is the $s$-tuple of non-decreasing positive integers $i_\rho=(n_1,\ldots,n_s)$, 
where $s\ge1$ is the number of distinct irreducible $\rho$-invariant subspaces $V_1,\dots, V_s$,
and $n_i=\dim V_i$ for each $1\leq i\leq s$. The positive integer $s$ is called the \emph{length} of the sequence $i_\rho$.
\end{definition}

Note that there may exist $i\neq j$ such that $V_i\cong V_j$ are isomorphic. Furthermore, if the i-sequence of $\rho$ is $i_\rho=(n_1,\cdots,n_s)$, then clearly $n_1+\cdots+n_s=\dim V$.

By the above, the i-sequence of the rational holonomy is an affine invariant:

\begin{corollary}\label{thm:iseqaffequivorb}
Rational holonomy representations of affinely equivalent compact flat orbifolds have the same i-sequence.
\end{corollary}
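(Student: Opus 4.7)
The plan is to derive this corollary directly from Proposition~\ref{thm:affequivflatorbifolds} together with the observations immediately following it. Given two affinely equivalent compact flat orbifolds $\mathcal O_1$ and $\mathcal O_2$, Proposition~\ref{thm:affequivflatorbifolds} supplies a $\mathds Q$-linear isomorphism $T\colon L^1\otimes\mathds Q\to L^2\otimes\mathds Q$ intertwining the rational holonomy representations, in the sense that $H^{(2)}=TH^{(1)}T^{-1}$. Thus $T$ is an equivalence of the two rational holonomy representations.

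The main step is to translate this equivalence of representations into equality of i-sequences. Fix a decomposition $L^1\otimes\mathds Q=V_1\oplus\cdots\oplus V_s$ into irreducible $H^{(1)}$-invariant subspaces, with $\dim V_i=n_i$ arranged in non-decreasing order. By the remark following Proposition~\ref{thm:affequivflatorbifolds}, each $T(V_i)\subset L^2\otimes\mathds Q$ is $H^{(2)}$-invariant and irreducible, and since $T$ is a linear isomorphism it preserves direct sums and dimensions. Consequently $L^2\otimes\mathds Q=T(V_1)\oplus\cdots\oplus T(V_s)$ is a decomposition into irreducible $H^{(2)}$-invariant subspaces with $\dim T(V_i)=n_i$. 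Rearranging (if necessary) in non-decreasing order yields the same tuple, so the i-sequences of the two rational holonomy representations coincide.

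There is no real obstacle here: the entire analytic and algebraic content is already packaged inside Proposition~\ref{thm:affequivflatorbifolds}, and the corollary just records that the numerical invariant introduced in Definition~\ref{thm:defisequence} depends only on the equivalence class of the representation. Accordingly, the write-up can be a two- or three-line proof that cites the proposition and the preceding remark.
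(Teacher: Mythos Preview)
Your proposal is correct and follows essentially the same approach as the paper: the corollary is stated there with no formal proof, only the preceding sentence ``By the above, the i-sequence of the rational holonomy is an affine invariant,'' where ``the above'' refers precisely to Proposition~\ref{thm:affequivflatorbifolds} and the remark that $T$ carries $H^{(1)}$-invariant (respectively, irreducible) subspaces to $H^{(2)}$-invariant (respectively, irreducible) ones. Your write-up simply spells this out.
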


Finally, note that Theorem~\ref{thm:hissszczepa91} states that the i-sequence $(n_1,\ldots,n_s)$ of the rational holonomy representation of any closed flat manifold $M_\pi$ has length $s\ge2$. Meanwhile, the i-sequence of the rational holonomy representation of a flat orbifold may have length $s=1$, see \cite[Sec.~5.3]{BetDerPic2017} for examples where $H_\pi$ is irreducible.

\subsection{Closed subgroups of vector spaces}
A closed subgroup of a finite dimensional vector space is the sum of a vector subspace and a discrete sugroup.
For the reader's convenience, we include a precise statement and a short proof of this fact:

\begin{proposition}\label{thm:closedsubgroupvectorspace}
Let $V$ be a finite dimensional real vector space, and let $\Gamma\subset V$ be a closed subgroup of $V$. If $\Gamma_0$ is the connected component of $\Gamma$ containing $0$, then $\Gamma_0$ is a vector subspace of $V$. Given any complement $V'$ of $\Gamma_0$ in $V$, $\Gamma'=V'\cap\Gamma$ is a discrete subgroup of $V'$, and $\Gamma=\Gamma_0+\Gamma'$. 
	If $\Gamma$ spans $V$, then $\Gamma'$ spans $V'$.
\end{proposition}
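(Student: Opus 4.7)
The plan is to verify the four assertions in order, the decomposition $\Gamma=\Gamma_0+\Gamma'$ and the spanning statement following easily once the structure of $\Gamma_0$ and $\Gamma'$ is pinned down. For the first claim, that $\Gamma_0$ is a vector subspace of $V$, I would invoke the closed subgroup theorem: $\Gamma_0$ is a closed connected subgroup of the abelian Lie group $V$, hence an embedded Lie subgroup whose Lie algebra $\mathfrak{g}$ is a linear subspace of $V$. Since the exponential map on $V$ is the identity, $\exp(\mathfrak{g})=\mathfrak{g}$ generates the connected group $\Gamma_0$, forcing $\Gamma_0=\mathfrak{g}$.

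The core step, and where I expect the main difficulty, is proving that $\Gamma'=V'\cap\Gamma$ is discrete. Closedness of $\Gamma'$ in $V'$ is automatic; the task is to isolate $0$ in $\Gamma'$. I would argue by contradiction: assuming $\gamma_n\in\Gamma'\setminus\{0\}$ with $\gamma_n\to 0$, I extract a convergent subsequence of the unit vectors $\gamma_n/\|\gamma_n\|$ with limit $w\in V'$, and then for each $t\in\R$ set $k_n=\lfloor t/\|\gamma_n\|\rfloor\in\Z$. Using the identity
\[
k_n\gamma_n - tw = (k_n\|\gamma_n\|-t)\,w + k_n\|\gamma_n\|\bigl(\gamma_n/\|\gamma_n\|-w\bigr)
\]
together with the bound $|k_n\|\gamma_n\|-t|\le\|\gamma_n\|$, I would show that $k_n\gamma_n\to tw$. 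Closedness of $\Gamma$ then places the whole line $\R w$ in $\Gamma$, and its connectedness places it in $\Gamma_0$, contradicting $w\in V'\cap\Gamma_0=\{0\}$.

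With the structural claims for $\Gamma_0$ and $\Gamma'$ in hand, I would deduce the decomposition by writing an arbitrary $\gamma\in\Gamma$ as $\gamma=a+b$ with $a\in\Gamma_0$, $b\in V'$ via $V=\Gamma_0\oplus V'$; since $a\in\Gamma_0\subset\Gamma$, one has $b=\gamma-a\in\Gamma\cap V'=\Gamma'$. For the spanning claim, assuming $\operatorname{span}(\Gamma)=V$ gives $V=\Gamma_0+\operatorname{span}(\Gamma')$, and any $v'\in V'$ written as $v'=a+w$ with $a\in\Gamma_0$ and $w\in\operatorname{span}(\Gamma')\subset V'$ forces $a=v'-w\in\Gamma_0\cap V'=\{0\}$, yielding $v'=w\in\operatorname{span}(\Gamma')$ and hence $V'=\operatorname{span}(\Gamma')$.
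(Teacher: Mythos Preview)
Your argument is correct. The key technical device---producing a full line $\R w\subset\Gamma$ out of a null sequence in $\Gamma$ by rescaling and using closedness---is the same one the paper uses, but the two proofs are organized differently. The paper deliberately avoids Lie theory: it applies the line-accumulation trick first, to show that any non-discrete closed subgroup contains a nonzero subspace, then takes the \emph{largest} subspace $S\subset\Gamma$, proves $\Gamma/S$ is discrete in $V/S$ (using that the image of $\Gamma$ is closed and, by maximality of $S$, contains no line), and only then identifies $S$ with $\Gamma_0$. You instead invoke the closed-subgroup theorem to get $\Gamma_0$ as a subspace immediately, and then run the line-accumulation argument directly on $\Gamma'$ to force a contradiction via $\R w\subset\Gamma_0\cap V'=\{0\}$. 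Your route is shorter and avoids the quotient step, at the cost of the Lie-theoretic black box; the paper's route is entirely elementary and self-contained, which is the explicit intent (the authors note that a Lie-theoretic proof exists but choose not to use it). The decomposition and spanning arguments are essentially identical in both.
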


\begin{proof}
Although the first statement above has a short Lie-theoretic proof, see e.g.~\cite[Prop.~3.1]{BetDerPic2017}, we now provide an elementary and direct argument.
First, observe that if $\Gamma$ is discrete, then $\Gamma$ is generated by an $\mathds R$-linearly independent subset of $V$. In particular, $\Gamma$ is a free abelian finitely generated group of rank $\leq \dim V$.

Now, if $\Gamma$ is not discrete, then $\Gamma$ contains a nonzero vector subspace of $V$. Namely, if $\Gamma$ is not discrete, then $0$ is not isolated in $\Gamma$, and there is a sequence $g_k\in\Gamma\setminus\{0\}$ with $\lim g_k=0$. 
Up to taking subsequences, we may assume that $\lim g_k/\Vert g_k\Vert=v\in V$, with $\Vert v\Vert=1$. 
We claim that $\mathds R\cdot v\subset\Gamma$. Indeed, if $t>0$, set $\alpha_k=t\Vert g_k\Vert^{-1}$, so that $\lim\alpha_k=+\infty$ and $\lim\alpha_k\,g_k=tv$. 
Defining $n_k=\lfloor\alpha_k\rfloor$, we have $n_k>0$ for $k$ large, so that $1\le\alpha_k/n_k\le 1+1/n_k$, and therefore $\lim\alpha_k/n_k=1$. This yields $\lim n_kg_k=\lim\alpha_k\g_k=t\,v$. Since $n_kg_k\in\Gamma$ and $\Gamma$ is closed, it follows that $t\,v\in\Gamma$. Clearly, also $-t\,v\in\Gamma$, i.e., $\mathds R\cdot v\subset\Gamma$.

Since $\Gamma$ is closed under taking sums, we may consider  the \emph{largest} subspace $S$ of $V$ contained in $\Gamma$. 
Note that $\Gamma/S$ is a discrete subgroup of $V/S$. Namely, if $P\colon V\to V/S$ is the quotient map, since $\Gamma$ is a closed $P$-saturated subset of $V$, it follows that $P(\Gamma)=\Gamma/S$ is closed in $V/S$. Moreover, the subgroup $\Gamma/S$ does not contain any nontrivial vector subspace of $V/S$, by the maximality of $S$. As we proved above, $\Gamma/S$ must then be discrete in $V/S$.

Since the quotient map $\Gamma\to\Gamma/S$ is continuous, $S$ is open in~$\Gamma$. Clearly, it is also closed, and therefore $S=\Gamma_0$ is the connected component of $\Gamma$ containing $0$. If $V'$ is a complement of $\Gamma_0$, let $P_{V'}\colon V\to V'$ be the projection corresponding to the direct sum decomposition $V=\Gamma_0\oplus V'$. Thus, 
$\Gamma=\Gamma_0+P_{V'}(\Gamma)$, and, by identifying $V'$ with $V/S$ and using the previous statement, $P_{V'}(\Gamma)$ is a closed and discrete subgroup of $V'$. Clearly, $P_{V'}(\Gamma)=\Gamma\cap V'$. As shown above, $P_{V'}(\Gamma)$ is then the $\mathds Z$-span of a linearly independent subset of $V'$, so the last statement follows.
\end{proof}

\section{Lattice-generated subspaces and lattice-closure}\label{sec:labases}

In this section, we develop some abstract elements in the theory of lattice-generated subspaces, including the construction of the lattice-closure of a subspace.

Denote by $V$ an $n$-dimensional real vector space, and by $L\subset V$ a fixed lattice.

\begin{definition}
A subspace $W\subset V$ is \emph{$L$-generated} if $W\cap L$ spans $W$. 
\end{definition}

If $W$ is $L$-generated, then $L\cap W$ is a lattice in $W$; namely, it is discrete and contains a basis of $W$.
Clearly, the sum of a family of $L$-generated subspaces is also $L$-generated. Less obvious is that the intersection of $L$-generated subspaces is also $L$-generated,
which we prove using the following characterization~\cite[Lemma~4.2]{derdzinski-piccione}:

\begin{proposition}\label{thm:charLgen}
A subspace $W\subset V$ is $L$-generated if and only if its projection onto the quotient torus $V/L$ is closed (equivalently, compact).
\end{proposition}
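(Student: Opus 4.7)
Let $p\colon V \to V/L$ denote the quotient map. The plan is to handle the two directions separately. The easy direction is $(\Leftarrow)$: if $W$ is $L$-generated, then $W \cap L$ is a (full) lattice in $W$, so the inclusion $W \hookrightarrow V$ descends to a continuous map from the compact torus $W/(W\cap L)$ onto $p(W) \subset V/L$. Hence $p(W)$ is compact, and so closed in the Hausdorff space $V/L$.

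For the converse, suppose $p(W)$ is closed, so that $W + L = p^{-1}(p(W))$ is closed in $V$. I would apply Proposition~\ref{thm:closedsubgroupvectorspace} to $\Gamma := W + L$, setting $S := \Gamma_0$ and fixing a complement $V'$ of $S$ with associated projection $P_{V'}\colon V \to V'$. Since $W$ is connected and contains $0$, it follows that $W \subset S$. The first substep is to show $S = W$: given $s \in S \subset W + L$, writing $s = w + \ell$ with $w \in W \subset S$ forces $\ell = s - w \in L \cap S$, whence $S = W + (L \cap S)$, so $S/W$ is a homomorphic image of $L \cap S$. Since $L \cap S$ is a subgroup of the finitely generated abelian group $L$, the quotient $S/W$ is finitely generated as an abelian group; being simultaneously a real vector space, it must be trivial, so $W = S$.

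The second substep is to show that $W = S$ is $L$-generated. From $\Gamma = S + \Gamma'$ with $\Gamma' := \Gamma \cap V'$ discrete, every $\gamma' \in \Gamma'$ has the form $\gamma' = w + \ell$ with $w \in W \subset S$, so $\gamma' = P_{V'}(\gamma') = P_{V'}(\ell)$; conversely every $P_{V'}(\ell)$ lies in $\Gamma'$. Hence $P_{V'}(L) = \Gamma'$. Because $\Gamma \supset L$ spans $V$, the last clause of Proposition~\ref{thm:closedsubgroupvectorspace} ensures that $\Gamma'$ spans $V'$, so $\Gamma'$ is a lattice in $V'$ of rank $n - \dim S$. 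The first isomorphism theorem then gives $L/(L\cap S) \cong \Gamma'$, forcing $L \cap S$ to have rank $\dim S$ and therefore---being discrete in $S$---to span $S = W$. Hence $W$ is $L$-generated.

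The main obstacle I anticipate is the temptation to use heavier machinery (structure theorems for closed connected subgroups of tori, or the open mapping theorem for $\sigma$-compact Polish groups to transfer compactness from $p(W)$ back to $W/(W\cap L)$). The argument above is deliberately elementary: it leverages Proposition~\ref{thm:closedsubgroupvectorspace} twice, once to extract $S$ containing $W$ and once to recognize $P_{V'}(L)$ as a full lattice in $V'$, with the vector-space-versus-finitely-generated observation serving as the bridge that collapses $S$ onto $W$.
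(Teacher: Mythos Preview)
Your proof is correct (aside from a harmless mislabel: the easy direction you prove first is $(\Rightarrow)$, not $(\Leftarrow)$). The approach, however, differs from the paper's. The paper fixes an inner product and observes that $p(W)$ is closed in $V/L$ if and only if $P_{W^\perp}(L)$ is discrete in $W^\perp$; it then settles both directions simultaneously with a single $\Z$-basis computation: choosing a $\Z$-basis $\ell_1,\dots,\ell_n$ of $L$ with $\span_\R(W\cap L)=\span_\R\{\ell_1,\dots,\ell_s\}$, discreteness of $P_{W^\perp}(L)$ is equivalent to linear independence of $P_{W^\perp}(\ell_{s+1}),\dots,P_{W^\perp}(\ell_n)$, which by a dimension count is equivalent to $s=\dim W$. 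You instead work with the closed subgroup $\Gamma=W+L$ and invoke Proposition~\ref{thm:closedsubgroupvectorspace}, first collapsing its identity component onto $W$ via the ``a finitely generated real vector space is zero'' observation, and then extracting $\rk(L\cap W)=\dim W$ from $L/(L\cap W)\cong P_{V'}(L)=\Gamma'$. The paper's route is shorter and more computational; yours is more structural, avoids any choice of inner product, and showcases Proposition~\ref{thm:closedsubgroupvectorspace} nicely, at the cost of a slightly longer argument.
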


\begin{proof}
Choose an inner product in $V$ and identify the quotient $V/W$ with $W^\perp$. Clearly, the image of $W$ in $V/L$ is closed if and only if there exists $\varepsilon>0$ such that $\dist(0,W+\ell)<\varepsilon$ for $\ell\in L$ implies $\ell\in W$, i.e., if and only if $P_{W^\perp}(L)$ is discrete. 

Choose a $\mathds Z$-basis $(\ell_1,\ldots,\ell_n)$ of $L$ such that $\span_\mathds R(W\cap L)=\span_\mathds R\{\ell_1,\ldots,\ell_s\}$, with $s\le\dim W$. Then, $P_{W^\perp}(L)$ is freely generated by $P_{W^\perp}(\ell_{s+1}),\ldots,P_{W^\perp}(\ell_n)$. If $P_{W^\perp}(L)$ is discrete, then 
$P_{W^\perp}(\ell_{s+1}),\ldots,P_{W^\perp}(\ell_n)$ are linearly independent, and therefore $n-s\le\dim W^\perp=n-\dim W$, i.e., $s=\dim W$ and $\span_\mathds R(W\cap L)=W$. Conversely, if $s=\dim W$, then since $P_{W^\perp}(\ell_{s+1}),\ldots,P_{W^\perp}(\ell_n)$ generate $W^\perp$, they must be linearly independent, and therefore $P_{W^\perp}(L)$ is discrete.
\end{proof}

Note that, by Proposition~\ref{thm:charLgen}, a subspace $W\subset V$ is $L$-generated if and only if the associated foliation $\mathcal F_W$ as in \eqref{eq:leaves} on the torus $M=V/L$ has compact leaves.
In particular, the following intersection property also holds, see also~\cite[Lemma~4.4]{derdzinski-piccione}:

\begin{corollary}\label{thm:intersectionLgen}
The intersection of a family of  $L$-generated subspaces of $V$  is also $L$-generated. 
\end{corollary}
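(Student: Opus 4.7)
The plan is to leverage the characterization implicit in the proof of Proposition~\ref{thm:charLgen}, namely that a subspace $W\subset V$ is $L$-generated if and only if $P_{W^\perp}(L)$ is discrete in $W^\perp$ (after fixing an inner product on $V$). Before invoking this dual criterion, I would first reduce to the case of two subspaces: since $V$ is finite-dimensional, any intersection of a family of subspaces equals an intersection of finitely many of its members (the descending chain of partial intersections must stabilize in dimension), so a simple induction reduces the statement to showing that the intersection of two $L$-generated subspaces $W_1,W_2$ is $L$-generated.

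For the two-subspace case, I would consider the linear map $\phi\colon V\to W_1^\perp\oplus W_2^\perp$ given by $\phi(v)=(P_{W_1^\perp}(v),P_{W_2^\perp}(v))$, whose kernel is precisely $W_1\cap W_2$. The first key step is to show that $\phi(L)$ is discrete: if $\ell_k\in L$ with $\phi(\ell_k)\to 0$, then each coordinate $P_{W_i^\perp}(\ell_k)$ converges to $0$ in the discrete set $P_{W_i^\perp}(L)$ and hence vanishes for $k$ large; thus $\ell_k\in W_1\cap W_2=\ker\phi$ for such $k$, so $\phi(\ell_k)=0$, showing that $0$ is isolated in $\phi(L)$. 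Discreteness everywhere then follows by translation, since $\phi(L)$ is a subgroup of $W_1^\perp\oplus W_2^\perp$.

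The final step is to transfer this discreteness to the orthogonal complement of $W_1\cap W_2$: I would factor $\phi=\bar\phi\circ P_{(W_1\cap W_2)^\perp}$, where $\bar\phi\colon(W_1\cap W_2)^\perp\to W_1^\perp\oplus W_2^\perp$ is the injective linear map induced by $\phi$. Being a continuous injection between finite-dimensional Euclidean spaces, $\bar\phi$ is a topological embedding, so the discreteness of $\phi(L)=\bar\phi\bigl(P_{(W_1\cap W_2)^\perp}(L)\bigr)$ forces $P_{(W_1\cap W_2)^\perp}(L)$ to be discrete in $(W_1\cap W_2)^\perp$. Proposition~\ref{thm:charLgen} then yields that $W_1\cap W_2$ is $L$-generated.

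The main obstacle I anticipate is resisting the tempting but inadequate approach via torus images: one might hope that $\pi_L(W_1\cap W_2)=\pi_L(W_1)\cap\pi_L(W_2)$, and conclude closedness of the left-hand side from that of the $\pi_L(W_i)$. This identity fails in general, since the right-hand side can be a finite union of translates of the left-hand side, so a more refined argument is needed. Dualizing via orthogonal projections onto complements, as above, sidesteps this subtlety cleanly and reduces the problem to elementary discreteness considerations.
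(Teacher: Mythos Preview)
Your argument is correct, but it takes a genuinely different route from the paper's. The paper works on the ``torus side'' of Proposition~\ref{thm:charLgen}: the images $\pi_L(W_1),\pi_L(W_2)\subset V/L$ are compact subtori, their intersection is a compact subgroup of $V/L$, and the identity component of that intersection is again a torus whose tangent space at $0$ is $W_1\cap W_2$; then Proposition~\ref{thm:charLgen} applies. You instead work on the dual ``discreteness side'' extracted from the proof of Proposition~\ref{thm:charLgen}: pairing the two orthogonal projections into a single injective map $\bar\phi$ on $(W_1\cap W_2)^\perp$ lets you pull back discreteness of $\phi(L)$ to $P_{(W_1\cap W_2)^\perp}(L)$. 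Your approach is more elementary in that it avoids any appeal to the structure of closed subgroups of tori, while the paper's is a bit more conceptual and geometric. Both reduce the arbitrary-family case to two subspaces by the same finite-dimensionality argument.

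One comment on your final paragraph: the ``obstacle'' you flag is real for the naive claim $\pi_L(W_1\cap W_2)=\pi_L(W_1)\cap\pi_L(W_2)$, but it is not an obstruction to the torus-image approach as such. Passing to the identity component of $\pi_L(W_1)\cap\pi_L(W_2)$, as the paper does, neutralizes precisely the finite-union-of-translates issue you describe; so the torus route is not inadequate, merely in need of that one extra step.
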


\begin{proof}
Given $L$-generated subspaces $W_1$ and $W_2$ of $V$, the projections of $W_1$ and $W_2$ onto $V/L$ are compact totally geodesic submanifolds (in fact, tori). The intersection of these projections is a compact subgroup of a torus, whose $0$-connected component is a closed, connected subgroup of a torus, hence a torus $\mathcal T$ itself. The tangent space to $\mathcal T$ at $0$ is the intersection $W_1\cap W_2$, and it follows from Proposition~\ref{thm:charLgen} that $W_1\cap W_2$ is $L$-generated. By induction, one easily obtains that the intersection of a finite family of $L$-generated subspaces of $V$ is also $L$-generated. Finally, given an arbitrary family $\mathfrak W=\{W_\alpha\}_{\alpha\in A}$ of $L$-generated subspaces of $V$, if $\{V_{\alpha_1},\ldots,V_{\alpha_k}\}$ is a finite subfamily of $\mathfrak W$ whose intersection has minimal dimension among all finite subfamilies of $\mathfrak W$, then $\bigcap_{\alpha\in A} W_\alpha=\bigcap_{j=1}^k W_{\alpha_j}$, hence
$\bigcap_{\alpha\in A} W_\alpha$ is $L$-generated.
\end{proof}

\subsection{\texorpdfstring{$L$-closure}{Lattice-closure}}
With the above intersection property at hand, we may define:

\begin{definition}
The \emph{$L$-closure} of a subspace $W\subset V$ is the intersection of all $L$-generated subspaces of $V$ that contain $W$. In other words, the $L$-closure of $W$ is the smallest $L$-generated subspace containing $W$. 
\end{definition}

\subsection{\texorpdfstring{Construction of the $L$-closure}{Construction of the closure}}
We now provide details of an explicit construction of the $L$-closure of a subspace, and describe some of its properties.

\begin{lemma}\label{thm:obs1}
If $G_1$, $G_2$ are free abelian groups, $\varphi\colon G_1\to G_2$ is a surjective homomorphism, and $x_j,y_a\in G_1$ (with indices $j,a$ ranging over finite sets) are  such that $x_j$ form a $\mathds Z$-basis of $\Ker \varphi$ and $\varphi(y_a)$ form a $\mathds Z$-basis of $G_2$, then the family consisting of all $x_j$ and $y_a$ forms a $\mathds Z$-basis of $G_1$.
\end{lemma}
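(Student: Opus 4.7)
The strategy is the standard splitting argument for a short exact sequence of abelian groups in which the quotient is free. Since $\varphi(y_a)$ is assumed to be a $\mathbb{Z}$-basis of $G_2$, the family $\{y_a\}$ provides a set-theoretic section of $\varphi$, extended by $\mathbb{Z}$-linearity, and this section splits the sequence
\begin{equation*}
0\longrightarrow \ker\varphi\longrightarrow G_1\stackrel{\varphi}{\longrightarrow} G_2\longrightarrow 0.
\end{equation*}
The claim then amounts to saying that the union of a $\mathbb{Z}$-basis of $\ker\varphi$ and a lift of a $\mathbb{Z}$-basis of $G_2$ is a $\mathbb{Z}$-basis of $G_1$, which I would verify directly in two steps.

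\textbf{Spanning.} Given any $g\in G_1$, the element $\varphi(g)\in G_2$ admits a unique expression $\varphi(g)=\sum_a n_a\,\varphi(y_a)$ with $n_a\in\mathbb{Z}$, since the $\varphi(y_a)$ form a $\mathbb{Z}$-basis of $G_2$. Setting $g':=g-\sum_a n_a\,y_a$, one has $\varphi(g')=0$, so $g'\in\ker\varphi$, and hence $g'=\sum_j m_j\,x_j$ for some $m_j\in\mathbb{Z}$, because the $x_j$ form a $\mathbb{Z}$-basis of $\ker\varphi$. Thus $g=\sum_j m_j\,x_j+\sum_a n_a\,y_a$.

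\textbf{Linear independence.} Suppose $\sum_j m_j\,x_j+\sum_a n_a\,y_a=0$ in $G_1$. Applying $\varphi$ and using that $\varphi(x_j)=0$ yields $\sum_a n_a\,\varphi(y_a)=0$, which forces all $n_a=0$ by $\mathbb{Z}$-linear independence of the $\varphi(y_a)$. The equation then reduces to $\sum_j m_j\,x_j=0$, and $\mathbb{Z}$-linear independence of the $x_j$ gives all $m_j=0$.

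There is no real obstacle here: the only thing one has to be careful about is that everything uses only integer coefficients (so that the conclusion is about a $\mathbb{Z}$-basis, not merely a set of generators), but this is automatic since the $n_a$ arising from the hypothesis that $\varphi(y_a)$ is a $\mathbb{Z}$-basis of $G_2$ are integers, and subtracting $\sum_a n_a y_a$ from $g\in G_1$ yields an element of $\ker\varphi$ which then expands integrally in the $x_j$.
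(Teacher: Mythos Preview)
Your proof is correct and is precisely the argument the paper has in mind: the paper's own proof simply states that every $g\in G_1$ can be uniquely expressed as an integer combination of the $x_j$ and $y_a$, leaving the verification to the reader. You have filled in exactly that verification via the standard splitting argument.
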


\begin{proof} 
It is easy to see that every $g\in G_1$ can be uniquely expressed as an integer combination of $x_j$ and $y_a$.
\end{proof}

\begin{lemma}\label{thm:directsummand}
If $G\subset V$ is a finitely generated (additive) subgroup, then for any subspace $W\subset V$, the intersection $G\cap W$ is a direct summand subgroup of $G$.
\end{lemma}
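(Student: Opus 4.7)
The plan is to realize $G\cap W$ as the kernel of a homomorphism from $G$ onto a free abelian group, and then invoke Lemma~\ref{thm:obs1} to splice together a $\mathds Z$-basis of $G\cap W$ with a lift of a $\mathds Z$-basis of the quotient, producing the desired direct sum decomposition.

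First I would recall that $G$, being a finitely generated torsion-free abelian group (it is a finitely generated subgroup of the torsion-free group $(V,+)$), is free abelian of finite rank. Consider the quotient homomorphism $\varphi\colon G\to G/(G\cap W)$. The crux of the argument is to show that $G/(G\cap W)$ is torsion-free: if $g\in G$ and $ng\in G\cap W$ for some positive integer $n$, then $ng\in W$, and since $W$ is a vector subspace, $g=\frac1n(ng)\in W$, so $g\in G\cap W$. Combined with the fact that $G/(G\cap W)$ is finitely generated (as a quotient of a finitely generated group), this forces $G/(G\cap W)$ to be free abelian of finite rank.

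Once this is established, the proof concludes quickly. Choose a $\mathds Z$-basis $\{x_j\}$ of $\Ker\varphi=G\cap W$, which exists since $G\cap W$ is a subgroup of the free abelian group $G$ and is therefore itself free abelian. Next, pick elements $y_a\in G$ such that $\{\varphi(y_a)\}$ is a $\mathds Z$-basis of the free abelian group $G/(G\cap W)$. By Lemma~\ref{thm:obs1}, the collection $\{x_j\}\cup\{y_a\}$ is a $\mathds Z$-basis of $G$. Setting $H=\span_{\mathds Z}\{y_a\}$, we obtain $G=(G\cap W)\oplus H$, which exhibits $G\cap W$ as a direct summand of $G$.

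The only real point of substance is the torsion-freeness of $G/(G\cap W)$, which rests entirely on the fact that $W$ is a vector subspace (closed under division by nonzero integers). Everything else is a standard application of the structure theory of finitely generated abelian groups, packaged neatly by the previous lemma.
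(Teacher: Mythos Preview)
Your proof is correct and follows essentially the same approach as the paper: both arguments reduce the claim to showing that $G/(G\cap W)$ is torsion-free (so free abelian) and then invoke Lemma~\ref{thm:obs1} to obtain the splitting. Your version is simply more explicit about the torsion-freeness step, spelling out the division-by-$n$ argument that the paper leaves implicit.
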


\begin{proof}
Since for a finitely generated abelian group $G$ being free is equivalent to being torsion-free, it follows from Lemma~\ref{thm:obs1} that a subgroup $G'\subset G$ is a direct summand of $G$ if and only if the quotient $G/G'$ is torsion-free. This holds, in particular, when $G$ is a finitely generated subgroup of a finite-dimensional real vector space $V$, and when $G'=G\cap W$ for some subspace $W$ of $V$.
\end{proof}

\begin{lemma}\label{thm:lemstarshapneighbasis}
Let $G$ be a finitely generated subgroup of the vector space $V$. If $G$ is dense in $V$, then every neighborhood of $0$ in $V$ contains a $\mathds Z$-basis of $G$.
\end{lemma}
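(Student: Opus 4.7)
The plan is to induct on the rank $r$ of $G$, which is finite since $G$ is finitely generated and torsion-free. The cases $r\le 1$ are trivial: a cyclic subgroup $\Z g$ is closed and discrete, hence density forces $V=\{0\}$, and the empty tuple is the desired basis.

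For $r\ge 2$, I would first extract an arbitrarily small primitive element. Density of $G$ in $V$ yields a nonzero $g_0\in G$ of arbitrarily small norm; writing $g_0=m\,g$ with $m\in\N$ maximal, the element $g\in G$ is primitive and satisfies $\|g\|\le\|g_0\|$, so $\Z g$ is a direct summand of $G$. I then set $L=\R g$ and $G_0=G\cap L$; Lemma~\ref{thm:directsummand} provides a direct sum decomposition $G=G_0\oplus G''$, and I let $k=\rk G_0\ge1$.

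If $\dim V=1$ I handle the problem directly: extend $g$ to a $\Z$-basis $(g,h_2,\dots,h_r)$ of $G$ using primitivity, and replace each $h_i$ by $h_i-m_i g$ for the unique $m_i\in\Z$ with $\|h_i-m_i g\|\le\|g\|/2$. If $\dim V\ge 2$, I project via $\pi\colon V\to V/L$: the image $G'=\pi(G)$ is dense in $V/L$, and $\pi|_{G''}$ is an injection with image $G'$ (since $G''\cap L=G''\cap G_0=\{0\}$), so $G'\cong G''$ has rank $r-k$, which is strictly less than $r$ because $G\not\subset L$. By the inductive hypothesis applied to $G'\subset V/L$, I obtain, for any $\delta>0$, a $\Z$-basis $h_1',\dots,h_{r-k}'$ of $G'$ with each $\|h_j'\|<\delta$; lifting along $\pi|_{G''}$ gives $h_j\in G''$ that are small modulo $L$ but not yet small in $V$.

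The absorbing step---making the $h_j$ small in $V$ by subtracting suitable elements of $G_0$---is the main technical obstacle and splits into two subcases. If $k=1$, then primitivity of $g$ forces $G_0=\Z g$, and I subtract the nearest integer multiple of $g$ from each $h_j$, obtaining $\tilde h_j$ of norm at most $\|g\|/2$ plus the $V/L$-error $\delta$. If $k\ge 2$, then $G_0$ is a finitely generated dense subgroup of $L\cong\R$ of rank $k<r$, so the inductive hypothesis applied to $G_0\subset L$ yields a small $\Z$-basis $g_1,\dots,g_k$ of $G_0$, and density of $G_0$ in $L$ allows me to approximate the $L$-component of each $h_j$ by an integer combination $\sum_i n_{ij}g_i$ within any prescribed tolerance. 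In either subcase, the new tuple $(g_1,\dots,g_k,\tilde h_1,\dots,\tilde h_{r-k})$ differs from $(g_1,\dots,g_k,h_1,\dots,h_{r-k})$ by the unimodular operation of adding integer combinations of $g_i$ to the $h_j$, hence is still a $\Z$-basis of $G$. The delicate point is to choose the tolerances $\|g\|$, $\delta$, and $\|g_i\|$ small enough at the outset---taking into account the norm of a fixed section of $\pi$---so that every element of the final basis lies inside the prescribed neighborhood.
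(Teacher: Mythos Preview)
Your argument is correct and follows the same inductive skeleton as the paper: choose a short primitive element $g$, project to $V/\R g$, apply the inductive hypothesis to the image, lift, and then correct the lifts by subtracting suitable elements lying along $\R g$. The difference is that you first split off $G_0=G\cap L$ as a direct summand via Lemma~\ref{thm:directsummand} and handle the two subcases $k=\rk G_0=1$ and $k\ge2$ separately, invoking the inductive hypothesis a \emph{second} time on the dense rank-$k$ subgroup $G_0\subset L\cong\R$ in the latter case (and treating $\dim V=1$ directly). The paper's proof, by contrast, tacitly works as if $k=1$: its final family has $1+\rk G'$ elements, which is a $\Z$-basis of $G$ only when $G\cap\R e_1=\Z e_1$. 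That equality is not forced by primitivity of $e_1$; for instance, in $G=\Z[\sqrt2\,]^2\subset\R^2$ one has $\sqrt2\,e_1\in G$ for every $e_1\in G$, so $\rk(G\cap\R e_1)\ge2$ always. Your extra case analysis is therefore not redundant but genuinely needed. One cosmetic simplification: fix an inner product on $V$ and identify $V/L$ with $L^\perp$; then the orthogonal section of $\pi$ is an isometry, the phrase ``norm of a fixed section of $\pi$'' becomes vacuous, and the estimate $\|\tilde h_j\|\le\|g\|/2+\delta$ (respectively $\eta+\delta$) is immediate from the orthogonal decomposition $h_j=a_j+b_j$ with $a_j\in L$, $b_j\in L^\perp$.
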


\begin{proof}
We proceed by induction on the rank of $G$, denoted $m=\rk G\ge2$. Note that $m>n=\dim V$, since $G$ is dense in $V$. In particular, if $m=2$ then $n \le1$, and the statement follows trivially. Assume the statement holds for all groups of rank less than $m$. Fix a group $G$ of rank $m$ and an Euclidean norm in $V$. Replace the neighborhood of $0$ by an $\varepsilon$-ball around $0$, and choose a $\mathds Z$-basis 
$e_1,\ldots,e_m$ of $G$ such that $0<|e_1 |<\varepsilon/2$. Note that this $\Z$-basis exists since we may choose $e_1\in G$ with this property and, dividing it by a suitable positive integer, ensure (via Lemma~\ref{thm:directsummand}) that it 
generates a di\-rect-sum\-mand sub\-group of $G$. 
Denote by $P\colon V\to V/\R e_1$ the quo\-tient space projection. The images $P(e_2),\ldots,P(e_m)$ generate a dense sub\-group $G'$ in $V/\mathds Re_1$ of rank less than $m$, and so all elements of some new $\mathds Z$-basis $P(\hat e_2),\ldots,P(\hat e_s)$ of $G'$, with $s\le m$, have norm 
less that $\varepsilon/2$. The desired $\mathds Z$-basis of $G$ consists of $e_1$ and $\hat e_2+k_2 e_1,\ldots,\hat e_s+k_s e_1$ for suitable integers $k_2,\ldots,k_s$. More precisely, we project $\hat e_2,\dots,\hat e_s$ orthogonally onto $e_1^\perp$, obtaining $\hat e_2+r_2e_1,\ldots,\hat e_s+r_s e_1$ with some $r_2,\ldots r_s\in\mathds R$. The desired $k_2,\ldots,k_s\in\mathds Z$ are obtained by choosing any integers satisfying $|k_j-r_j|\le1$ for $2\le j\le s$.
\end{proof}

\begin{lemma}\label{thm:Lclosuredenseprojection}
Let $L\subset V$ be a lattice, and $P\colon V\to V/W$ be the quotient map. Then $W$ is $L$-generated if and only if $P(L)$ is discrete in $V/W$.
\end{lemma}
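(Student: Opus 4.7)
The plan is to reduce both directions to rank considerations for the short exact sequence $0\to L\cap W\to L\to P(L)\to 0$, which holds because $\ker P\cap L=W\cap L$. Since $L$ is a finitely generated free abelian group of rank $n=\dim V$, we get $\operatorname{rk}P(L)=n-\operatorname{rk}(L\cap W)$, and moreover $\operatorname{rk}(L\cap W)\le\dim W$ since $L\cap W$ is discrete in $W$.

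For the forward direction, assume $W$ is $L$-generated, so $\operatorname{rk}(L\cap W)=\dim W$. By Lemma~\ref{thm:directsummand}, $L\cap W$ is a direct summand of $L$, so one can choose a $\mathds Z$-basis $\ell_1,\ldots,\ell_n$ of $L$ whose first $\dim W$ vectors form a $\mathds Z$-basis of $L\cap W$. Then $P(L)$ is generated as a group by $P(\ell_{\dim W+1}),\ldots,P(\ell_n)$. Since $\ell_1,\ldots,\ell_n$ is an $\mathds R$-basis of $V$ and the first $\dim W$ of them span $W$, the remaining $n-\dim W=\dim(V/W)$ images descend to an $\mathds R$-basis of $V/W$; hence $P(L)$ is a full lattice in $V/W$, and in particular discrete.

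For the converse, assume $P(L)$ is discrete. Being a discrete subgroup of the Hausdorff topological group $V/W$, $P(L)$ is automatically closed; Proposition~\ref{thm:closedsubgroupvectorspace} (with trivial identity component) then implies that $P(L)$ is generated by an $\mathds R$-linearly independent subset of $V/W$, so $\operatorname{rk}P(L)\le\dim(V/W)=n-\dim W$. Combining with the rank equation yields $\operatorname{rk}(L\cap W)\ge\dim W$; together with the opposite inequality noted above, we obtain $\operatorname{rk}(L\cap W)=\dim W$, i.e.\ $L\cap W$ spans $W$, so $W$ is $L$-generated.

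There is no real obstacle: the argument is essentially the content already appearing in the proof of Proposition~\ref{thm:charLgen}, recast in terms of the quotient map $P\colon V\to V/W$ instead of an orthogonal projection, so no choice of inner product is needed. The only delicate point is invoking Lemma~\ref{thm:directsummand} to extend a $\mathds Z$-basis of $L\cap W$ to one of $L$ in the forward direction.
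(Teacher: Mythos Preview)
Your proof is correct and follows essentially the same approach as the paper. The forward direction is identical (choose a $\Z$-basis of $L$ compatible with $L\cap W$ and check that the remaining images are $\R$-linearly independent in $V/W$), while for the converse the paper simply writes ``the converse is trivial,'' implicitly relying on the rank-counting argument already carried out in the proof of Proposition~\ref{thm:charLgen}; your version makes that argument explicit via Proposition~\ref{thm:closedsubgroupvectorspace}, which is exactly the same idea.
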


\begin{proof}
If $W$ is $L$-generated, let $\{\ell_1,\ldots,\ell_n\}$ be a $\mathds Z$-basis of $L$, with $\{\ell_1,\ldots,\ell_k\}$ a basis of $W$. Then, $P(L)$ is discrete if and only if $P(\ell_{k+1}),\ldots,P(\ell_n)\in V/W$ are linearly independent. If $\sum_{j=k+1}^n\alpha_jP(\ell_j)=0$, then $\sum_{j=k+1}^n\alpha_j\ell_j\in W$, hence $\alpha_{k+1}=\ldots=\alpha_n=0$, so $P(L)$ is discrete in $V/W$. The converse is trivial.
\end{proof}

We are now in position to give an explicit construction (and establish further structural properties) of the $L$-closure $\widehat W$ of a subspace $W$. 

\begin{proposition}\label{xtens}
Given a fi\-nite-di\-men\-sion\-al real vector space $V$, a lattice 
$L\subset V$, and a vector sub\-space $W\subset V$, denote by $P\colon V\to V/W$ the quo\-tient space projection.
Let $\mathfrak L$ be the closure in $V/W$ of the image $P(L)$, and $K$ be the connected component of $\mathfrak L$ that contains $0$. Set $\widehat W=P^{-1}(K)$. Then the following hold:
\begin{enumerate}[\rm (a)]
\renewcommand{\theenumi}{\textrm{\alph{enumi}}}
\item\label{itm:a} $K$ and $\widehat W$ are vector sub\-spaces of, respectively,  $V/W$ and $V$;
\item\label{itm:b} $L$ has a $\mathds Z$-basis of the form 
$\{w_j,v_a,u_\lambda\}$, with indices $j,a,\lambda$ ranging over 
finite sets, such that the vectors $w_j$ generate $L\cap W$, 
while $w_j$ and $v_a$ together span~$\widehat W$;
\item\label{itm:c} $\widehat W$ is an $L$-generated sub\-space of $V$, containing  $W$, and spanned by the group $L'=L\cap\widehat W$;
\item\label{itm:d} every $L$-generated sub\-space of $V$ that contains $W$ also contains $\widehat W$;
\item\label{itm:e} $P(\widehat W)=K$, and 
$K\cap P(L)=P(L')$ is a dense subset of 
$K$;
\item\label{itm:f} the inclusions $P(L)\subseteq \mathfrak L\subseteq V/W$ and 
$P(L')\subseteq K$ induce a group iso\-mor\-phism 
$P(L)/P(L')\to \mathfrak L/K$ and an injective
homo\-mor\-phism $\mathfrak L/K\to (V/W)/K$, whose image is a full lattice in the quotient vector space $(V/W)/K$.
\end{enumerate}
Furthermore, $w_j,v_a,u_\lambda$ in {\rm (b)} can be chosen so that $\{v_a\}_a\cup \{w_j\}_j$ is a $\Z$-basis of $L\cap W$, $\{P(v_a)\}_a$ is a $\Z$-basis of $P(L')$, and $\{u_\lambda+P(L')\}_\lambda$ is a $\Z$-basis of $P(L)/P(L')$.
\end{proposition}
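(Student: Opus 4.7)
\medskip

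\noindent\textbf{Proof plan.}
The backbone of the argument is Proposition~\ref{thm:closedsubgroupvectorspace} applied to the closed subgroup $\mathfrak L\subset V/W$ and Lemma~\ref{thm:Lclosuredenseprojection} used as the characterization ``$U$ is $L$-generated iff $L$ projects to a discrete subgroup of $V/U$.'' I would first dispose of (a): since $\mathfrak L$ is a closed subgroup of the finite-dimensional vector space $V/W$, Proposition~\ref{thm:closedsubgroupvectorspace} identifies its $0$-component $K$ with a vector subspace of $V/W$ and supplies a direct-sum decomposition $\mathfrak L=K\oplus(\mathfrak L\cap V')$ for any complement $V'$ of $K$, with $\mathfrak L\cap V'$ discrete. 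Continuity and linearity of $P$ then make $\widehat W=P^{-1}(K)$ a subspace of $V$ containing $W=P^{-1}(0)$.

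Next, I would prove (c) and (d) together using the canonical isomorphism $V/\widehat W\cong (V/W)/K$. For (c): the image of $L$ in $V/\widehat W$ equals the image of $P(L)$ modulo $K$, and because $P(L)$ is dense in $\mathfrak L$ while $\mathfrak L/K$ is discrete (from the decomposition above), that image is exactly $\mathfrak L/K$, hence discrete. Lemma~\ref{thm:Lclosuredenseprojection} then yields that $\widehat W$ is $L$-generated, so $\widehat W=\span_\R L'$. For (d): any $L$-generated $U\supseteq W$ forces the image of $P(L)$ in $(V/W)/(U/W)$ to be discrete; passing to the closure, the image of $\mathfrak L$ must be discrete, which forces the vector subspace $K$ to project to $\{0\}$, i.e.\ $K\subseteq U/W$, hence $\widehat W\subseteq U$.

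I would then obtain the $\Z$-basis in (b) by iterated application of Lemma~\ref{thm:directsummand}: with $G=L'$ and subspace $W$, the inclusion $L\cap W\subseteq L'$ is a direct-summand embedding, so pick a $\Z$-basis $\{w_j\}$ of $L\cap W$ and extend by $\{v_a\}$ to a $\Z$-basis of $L'$; with $G=L$ and subspace $\widehat W$, the inclusion $L'\subseteq L$ is also a direct-summand embedding, so extend by $\{u_\lambda\}$ to a $\Z$-basis of $L$. Since $\widehat W$ is $L$-generated by (c), $\{w_j,v_a\}$ spans $\widehat W$ as claimed. The ``furthermore'' properties are then immediate: the $P(v_a)$ $\Z$-generate $P(L')$ and are $\Z$-independent because any nontrivial integer relation among them would push a nonzero element of the complement of $L\cap W$ inside $W$; and the $P(u_\lambda)$ represent a $\Z$-basis of $P(L)/P(L')$ by the same direct-summand argument applied after projection.

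Finally, (e) and (f) are short diagram chases. The equality $P(\widehat W)=K$ is surjectivity of $P|_{\widehat W}$ onto $K$; the identification $K\cap P(L)=P(L')$ is the observation that $\ker(P|_L)=L\cap W\subset L'$, so $P(\ell)\in K$ iff $\ell\in L\cap P^{-1}(K)=L'$; and density of $P(L')$ in $K$ is extracted from the decomposition $\mathfrak L=K\oplus(\mathfrak L\cap V')$ by noting that if $\ell_n\in P(L)$ converges to $k\in K$ then the $V'$-components lie in the discrete set $\mathfrak L\cap V'$ and must vanish eventually, landing $\ell_n$ in $K\cap P(L)=P(L')$. For (f), the first homomorphism is surjective (because $P(L)$ has dense image in $\mathfrak L/K$, which is discrete) with kernel $P(L)\cap K=P(L')$; the second is the tautological inclusion-induced map $\mathfrak L/K\hookrightarrow(V/W)/K$, whose image is discrete (by the decomposition) and full (since $P(L)\subseteq\mathfrak L$ already spans $V/W$). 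The most delicate step, and the one I would set up most carefully, is the density/discreteness dichotomy in the second paragraph: it is what promotes ``$P(L)$ dense in $\mathfrak L$'' to the lattice structure underlying all of (c)--(f).
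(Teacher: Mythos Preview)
Your proof is correct and, in several places, takes a genuinely different route from the paper's.

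The most notable divergence is in part~(d). The paper appeals to an external fact \cite[Rem.~4.10]{derdzinski-piccione} together with Lemma~\ref{thm:lemstarshapneighbasis} (the statement that a dense finitely generated subgroup admits a $\Z$-basis inside any neighborhood of~$0$) to show that an $L$-generated $U\supseteq W$ must contain the chosen $\Z$-basis vectors $v_a$, hence all of~$\widehat W$. Your argument is considerably more direct: since $U$ is $L$-generated, Lemma~\ref{thm:Lclosuredenseprojection} makes the image of $P(L)$ in $(V/W)/(U/W)$ discrete; continuity of the quotient map then forces the image of $\mathfrak L=\overline{P(L)}$ into that same discrete set, so the connected subspace $K$ is killed and $\widehat W\subseteq U$. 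This bypasses Lemma~\ref{thm:lemstarshapneighbasis} entirely.

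The logical order also differs. The paper proves (e) and (f) first, then builds the $\Z$-basis in (b) via two applications of Lemma~\ref{thm:obs1} to the surjections $P(L)\to P(L)/P(L')$ and $L\to P(L)$, and only then reads off (c). You instead establish (c) immediately from Lemma~\ref{thm:Lclosuredenseprojection} (the image of $L$ in $V/\widehat W\cong (V/W)/K$ is $\mathfrak L/K$, which is discrete by Proposition~\ref{thm:closedsubgroupvectorspace}), and then obtain the $\Z$-basis in (b) by two applications of Lemma~\ref{thm:directsummand} to the chain $L\cap W\subset L'\subset L$. Both constructions yield the same refined basis in the ``furthermore'' clause; yours is perhaps the more transparent way to see it, while the paper's has the virtue of making the isomorphism $P(L)/P(L')\cong\mathfrak L/K$ in (f) appear first and drive the basis choice.
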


\begin{proof}
Part \eqref{itm:a} follows readily from Proposition~\ref{thm:closedsubgroupvectorspace}. The first equality of \eqref{itm:e} is obvious, and yields $P(L')\subseteq K\cap P(L)$. For the reverse inclusion, note that any element of $K\cap P(L)=P(\widehat W)\cap P(L)$ may be expressed as $P(v)=P(u)$ with $v\in\widehat W$ and $u\in L$, so that $w=u-v\in W\subseteq\widehat W$, and 
$P(v+w)=P(u)
\in P(\widehat W)\cap P(L)$. The inclusions in \eqref{itm:f} clearly descend to group homo\-mor\-phisms, both of which are injective as $K\cap P(L)=P(L')$. The quotient 
$\mathfrak L/K$, forming a discrete sub\-group of the vector space 
$(V/W)/K$, is a full lattice. Indeed, it spans $(V/W)/K$, since 
$L$ and $P(L)\subseteq \mathfrak L$ span $V$ and $V/W$, respectively. Surjectivity of $P(L)/P(L')\to \mathfrak L/K$ follows; by the above-men\-tion\-ed discreteness of $\mathfrak L/K$, each coset of $K$ contained in $\mathfrak L$ coincides with the closure of its intersection with $P(L)$, hence the intersection is nonempty. 
In particular, $0+K=K$ is the closure of $K\cap P(L)$.
This completes the proof of \eqref{itm:e} and \eqref{itm:f}.

As a consequence of \eqref{itm:f}, we may choose vectors $u_\lambda\in L$, whose image under the composition of quo\-tient space projections $V\to V/W\to (V/W)/K$, or under $P\colon L\to P(L)$ followed by $P(L)\to P(L)/P(L')$, form any prescribed $\mathds Z$-basis of $\mathfrak L/K$ or, respectively, of 
$P(L)/P(L')$. We also fix $w_j\in L$ and $v_a\in L'$ such that $w_j$, or 
$P(v_a)$, constitute any given $\mathds Z$-basis of 
$L\cap W$ or, respectively, $P(L')$.
Lemma~\ref{thm:obs1} can now be applied first to the quo\-tient-pro\-jec\-tion homo\-mor\-phism $P(L)\to P(L)/P(L')$, and then to $P|_L\colon L\to P(L)$, whose kernel is $L\cap W$. The two successive applications show that $P(v_a),P(u_\lambda)$ and $w_j,v_a,u_\lambda$ are $\mathds Z$-bases of $P(L)$ and $L$. The first equality in \eqref{itm:e} implies that $P$ descends to a linear iso\-mor\-phism $V/\widehat W\to (V/W)/K$ which, when preceded by the quo\-tient-space projection $V\to V/\widehat W$, yields the surjective operator $V\to (V/W)/K$ with the kernel $\widehat W$ sending the vectors $w_j,v_a$ to $0$ (as $P(w_j)=0$, while $P(v_a)$ lie in 
$P(L')\subseteq K$), and $u_\lambda$ to a $\Z$-basis of the full 
lattice $\mathfrak L/K\subseteq (V/W)/K$.
Thus, $w_j$ and $v_a$ span $\widehat W$. This establishes \eqref{itm:b}, \eqref{itm:c} and the final statement in the Proposition. 

Finally, to prove \eqref{itm:d}, consider an $L$-generated sub\-space $\widehat V$ of $V$ containing $W$. According to \cite[Rem~4.10]{derdzinski-piccione}, for some open set $U\subseteq V$ equal to a union of cosets of $\widehat V$  (and hence also of $W$) one has $L\cap U=L\cap\widehat V$. Thus, by \eqref{itm:e} and Lemma~\ref{thm:lemstarshapneighbasis}, the open set $P(U)\subseteq V/W$ contains the $\mathds Z$-basis $P(v_a)$ of $P(L')$, and by the last statement in the Proposition, the vectors $v_a$, along with suitable $w_j\in W\subseteq\widehat V$, together span $\widehat W$. On the other hand, in view of the choice of $U$, all $v_a$ lie in $\widehat V$, so $\widehat V$ contains~$\widehat W$.
\end{proof}

By Proposition~\ref{xtens}  \eqref{itm:c} and \eqref{itm:d}, the subspace $\widehat W$ above is the $L$-closure of $W$.

\begin{remark}\label{thm:remquotientorthogonal}
When $V$ is endowed with an inner product, one can identify the quotient $V/W$ with the orthogonal complement $W^\perp\subset V$, and the quotient projection $P\colon V\to V/W$ with the orthogonal projection $P_{W^\perp}\colon V\to W^\perp$. Under these identifications, the subspace $K$ is the connected component of the closure $\overline{P_{W^\perp}(L)}$ in $W^\perp$ that contains $0$, while $\widehat W$ is given by the direct sum $W\oplus K$, and the quotient space $(V/W)/K$ is identified with the orthogonal complement $\widehat W^\perp$.
\end{remark}

\subsection{\texorpdfstring{Invariance by finite subgroups of $\GL(V)$}{Invariance by finite groups of automorphisms}}\label{sub:Lclosureinvariance}
We now discuss how the $L$-closure of subspaces behaves with respect to invariance under certain group actions.

\begin{proposition}\label{thm:Lclosgroupinv}
If $H\subset\GL(V)$ is a group, $L\subset V$ is an $H$-invariant lattice, and $W\subset V$ is an $H$-invariant subspace, then the $L$-closure of $W$ is $H$-invariant. 
\end{proposition}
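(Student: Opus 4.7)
The plan is to prove invariance by exploiting the minimality characterization of the $L$-closure established in Proposition~\ref{xtens}\eqref{itm:d}: $\widehat W$ is the \emph{smallest} $L$-generated subspace containing~$W$. So it suffices, for every $A\in H$, to produce an $L$-generated subspace containing $W$ that is equal to $A(\widehat W)$, and then play a symmetry argument with $A^{-1}$.

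First, observe that since $L$ is $H$-invariant and each $A\in H$ is a linear automorphism of $V$, one has $A(L)=L$ (the inclusion $A(L)\subseteq L$ holds by invariance, and applying it to $A^{-1}\in H$ gives the reverse inclusion). Consequently, for the $L$-generated subspace $\widehat W$, I would verify that the subspace $A(\widehat W)$ is again $L$-generated: using that $\widehat W = \operatorname{span}_{\R}(\widehat W\cap L)$ and that $A$ is linear,
\[
A(\widehat W) \;=\; \operatorname{span}_{\R}\!\bigl(A(\widehat W\cap L)\bigr) \;\subseteq\; \operatorname{span}_{\R}\!\bigl(A(\widehat W)\cap A(L)\bigr) \;=\; \operatorname{span}_{\R}\!\bigl(A(\widehat W)\cap L\bigr),
\]
and the reverse inclusion is trivial. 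Thus $A(\widehat W)$ is $L$-generated.

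Next, since $W$ is $H$-invariant, $A(\widehat W)\supseteq A(W)=W$, so $A(\widehat W)$ is an $L$-generated subspace containing $W$. By Proposition~\ref{xtens}\eqref{itm:d}, this forces $\widehat W\subseteq A(\widehat W)$. Applying the same reasoning to $A^{-1}\in H$ yields $\widehat W\subseteq A^{-1}(\widehat W)$, i.e., $A(\widehat W)\subseteq \widehat W$. Combining the two inclusions gives $A(\widehat W)=\widehat W$, as required.

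I do not expect any real obstacle here; the whole argument is a formal consequence of the minimality of $\widehat W$ together with the fact that $H$ acts by linear automorphisms preserving $L$. The only subtlety is making sure that the image of an $L$-generated subspace under an $L$-preserving linear isomorphism is itself $L$-generated, which is the small computation carried out above.
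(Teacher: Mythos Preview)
Your proof is correct and takes essentially the same approach as the paper's: both rest on the observation that each $A\in H$ sends $L$-generated subspaces containing $W$ to $L$-generated subspaces containing $W$. The paper phrases this as ``$H$ permutes the family of such subspaces, hence fixes their intersection $\widehat W$,'' while you use the equivalent minimality formulation applied to $\widehat W$ itself together with $A^{-1}$ to obtain both inclusions.
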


\begin{proof}
For every $h\in H$ and $L$-generated subspace $W'\subset V$ that contains $W$, we have that $h(W')$ is $L$-generated because $L$ is $H$-invariant, and contains $W$ since $W$ is $H$-invariant. Thus, the family of all $L$-generated subspaces that contain $W$ is $H$-invariant, though each individual subspace need not be. Therefore, the intersection of all members of the family, which is the $L$-closure of $W$, is also $H$-invariant. 
\end{proof}

\begin{lemma}\label{thm:ZbasisLgen}
If $W\subset V$ is $L$-generated, and $k=\dim W$, then there exists a $\mathds Z$-basis $\{\ell_1,\ldots,\ell_n\}$ of $L$ such that $\{\ell_1,\ldots,\ell_k\}$ is a basis of $W$.
\end{lemma}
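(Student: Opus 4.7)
The proof will be short and will amount to assembling pieces already developed in the excerpt.

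The plan is to observe that once $W$ is $L$-generated, $L\cap W$ is a lattice in $W$, and to use the direct-summand property (Lemma~\ref{thm:directsummand}) to extend any $\mathbb Z$-basis of $L\cap W$ to a $\mathbb Z$-basis of $L$. More precisely, since $W$ is $L$-generated, by definition $L\cap W$ spans $W$, and $L\cap W$ is discrete in $W$ (being a subset of the discrete set $L$). Thus $L\cap W$ is a full lattice in $W$, and hence a free abelian group of rank $k=\dim W$.

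Next, I invoke Lemma~\ref{thm:directsummand} applied to $G=L$ and to the subspace $W\subset V$: it tells us that $L\cap W$ is a direct summand of $L$. Hence there exists a subgroup $L''\subset L$ with $L=(L\cap W)\oplus L''$, and $L''$ is itself free abelian of rank $n-k$. Choosing any $\mathbb Z$-basis $\{\ell_1,\ldots,\ell_k\}$ of $L\cap W$ (which is automatically an $\mathbb R$-basis of $W$ because $L\cap W$ has rank $k=\dim W$ in $W$) together with any $\mathbb Z$-basis $\{\ell_{k+1},\ldots,\ell_n\}$ of $L''$ produces the desired $\mathbb Z$-basis of $L$ whose first $k$ elements form a basis of $W$.

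There is essentially no obstacle here: all the nontrivial content is packaged into Lemma~\ref{thm:directsummand} (equivalently, into the fact that $L/(L\cap W)$ is torsion-free, since any $\ell\in L$ with $m\ell\in W$ for some positive integer $m$ already lies in $W$). The only thing worth double-checking is the rank count: $L$ has rank $n$, $L\cap W$ has rank $k$, so $L''$ has rank $n-k$, matching the dimension of a complement of $W$ in $V$.
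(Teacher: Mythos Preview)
Your proof is correct and follows essentially the same route as the paper: both argue that $L\cap W$ is a direct summand of $L$ (the paper states this via the torsion-freeness of $L/(L\cap W)$, which is exactly what underlies Lemma~\ref{thm:directsummand}), and then complete a $\mathds Z$-basis of $L\cap W$ to one of $L$ using a $\mathds Z$-basis of a direct-sum complement.
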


\begin{proof}
Since $L/(L\cap W)$ is torsion-free, $L\cap W$ is a direct summand in $L$. Take a $\mathds Z$-basis of $W\cap L$ and complete it to a basis of $L$ by joining it with a $\mathds Z$-basis of a (direct sum) complement of $W\cap L$ in $L$.
\end{proof}

In particular, note that Lemma~\ref{thm:ZbasisLgen} implies that any $L$-generated subspace of $V$ admits a complement which is also $L$-generated. This can be refined as follows, see also~\cite[Thm 4.8]{derdzinski-piccione}:

\begin{proposition}\label{thm:ExistcomplHinvLgen}
Let $H\subset\GL(V)$ be a finite group, and suppose $L$ is $H$-invariant. Given an $L$-generated and $H$-invariant subspace $W\subset V$,  there exists a complement $W'$ of $W$ in $V$ which is $L$-generated and $H$-invariant.
\end{proposition}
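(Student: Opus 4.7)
The plan is to produce $W'$ by working inside the rational structure on $V$ determined by the lattice $L$, and then extending scalars to $\R$. Both of the desired properties of $W'$ translate cleanly to the rational level: $H$-invariance is preserved by extension of scalars, while being $L$-generated is precisely the condition that $W'$ comes from a $\Q$-subspace of $L\otimes_\Z\Q$. So if I can find an $H$-invariant $\Q$-complement of $W\cap(L\otimes_\Z\Q)$ inside $L\otimes_\Z\Q$, tensoring with $\R$ will finish the job.

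First I would set up the rational structure. Let $V_\Q:=L\otimes_\Z\Q$, viewed as a $\Q$-subspace of $V$ via the inclusion $L\hookrightarrow V$; note $V_\Q\otimes_\Q\R=V$ because a $\Z$-basis of $L$ is also an $\R$-basis of $V$. Since $L$ is $H$-invariant, the action of $H$ on $V$ restricts to a $\Q$-linear representation on $V_\Q$. The hypothesis that $W$ is $L$-generated and $H$-invariant implies $W\cap L$ is an $H$-invariant subgroup of $L$ that spans $W$ over $\R$, and an application of Lemma~\ref{thm:ZbasisLgen} (or just Lemma~\ref{thm:directsummand}) shows $W_\Q:=(W\cap L)\otimes_\Z\Q=W\cap V_\Q$ is an $H$-invariant $\Q$-subspace of $V_\Q$ with $W_\Q\otimes_\Q\R=W$.

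Next I would apply Maschke's theorem over $\Q$: because $|H|$ is invertible in $\Q$, the representation $V_\Q$ of $H$ is completely reducible, so $W_\Q$ admits an $H$-invariant $\Q$-linear complement $W'_\Q\subset V_\Q$. Explicitly, one starts from any $\Q$-linear projection $p\colon V_\Q\to W_\Q$ and averages it to
\begin{equation*}
\tilde p=\frac{1}{|H|}\sum_{h\in H}h\circ p\circ h^{-1},
\end{equation*}
which is an $H$-equivariant projection onto $W_\Q$; then $W'_\Q:=\Ker\tilde p$ works.

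Finally, I would set $W':=W'_\Q\otimes_\Q\R\subset V$. Extension of scalars turns the decomposition $V_\Q=W_\Q\oplus W'_\Q$ into $V=W\oplus W'$, and $W'$ is $H$-invariant because $W'_\Q$ is. To see $W'$ is $L$-generated, take any $\Q$-basis of $W'_\Q$; clearing denominators (each basis vector lies in $\tfrac1N L$ for some $N\in\N$) yields a $\Q$-basis of $W'_\Q$ contained in $L\cap W'$, and the same vectors form an $\R$-basis of $W'$, so $W'\cap L$ spans $W'$ over $\R$. There is no serious obstacle here: the only nontrivial ingredient is Maschke's theorem, and its conclusion applies directly since we are working over $\Q$ with a finite group.
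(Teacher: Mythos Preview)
Your proof is correct and follows essentially the same approach as the paper: both pass to the rational vector space $V_\Q=L\otimes_\Z\Q$, identify $W_\Q=(W\cap L)\otimes_\Z\Q$ as an $H$-invariant $\Q$-subspace, and apply the Maschke averaging trick to a $\Q$-linear projection $V_\Q\to W_\Q$ to produce an $H$-equivariant projection whose kernel is the desired complement. The only cosmetic difference is that you spell out the verification that $W'$ is $L$-generated via clearing denominators, whereas the paper packages this as the observation that kernels of $\Q$-linear projections $V_\Q\to W_\Q$ are exactly the $L$-generated complements of $W$.
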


\begin{proof}
Consider the rational vector space $V_\mathds Q=L\otimes\mathds Q$, and set $W_\mathds Q=(W\cap L)\otimes\mathds Q$, which is a rational subspace of $V_\mathds Q$. Consider the set $\mathcal S$ of all $\mathds Q$-linear projections $P\colon V_\mathds Q\to W_\mathds Q$. We know that $\mathcal S$ is nonempty from Lemma~\ref{thm:ZbasisLgen}. Moreover, $P\mapsto\Ker P$  is clearly a bijection from $S$ to the set of $L$-generated complements of $W$.  Since $L$ is $H$-invariant, $H$ acts on $V_\mathds Q$. There is an action of $H$ on $\mathcal S$ given by
\[H\times\mathcal S\ni (h,P)\longmapsto P_h\in\mathcal S,\]
where $P_h(x)=h^{-1} P(hx)$, for all $x\in V_\mathds Q$. The average $\overline P=\frac1{\vert H\vert}\sum_{h\in H}P_h$ is easily seen to be an element of $\mathcal S$. Since $\overline P$ is $H$-equivariant, its kernel is $H$-invariant, and this is the desired $H$-invariant and $L$-generated complement of $W$.
\end{proof}

\section{Subspace foliations of flat manifolds}
\label{sec:foliation}

In this section, we study the geometry of subspace foliations $\mathcal F_W$ of a flat manifold $M_\pi=\R^n/\pi$, that is, partitions of $M_\pi$ into the totally geodesic submanifolds $\mathcal F_W(u)=P_\pi(W+u)$, $u\in W^\perp$, where $P_\pi\colon \R^n\to M_\pi$ is the covering map, cf.~\eqref{eq:leaves}.
Note that subspace foliations $\mathcal F_W$ are \emph{hyperpolar}, i.e., there exists a totally geodesic flat submanifold $P_\pi(W^\perp)\subset M_\pi$ that intersects all leaves of $\mathcal F_W$ orthogonally.

\begin{remark}\label{thm:remsameleaf}
It is straightforward to verify that the leaves $\mathcal F_W(u)$ and $\mathcal F_W(u')$ coincide if and only if there exists $(A,v)\in\pi$ with $Au+v-u'\in W$. 
\end{remark}

While the leaves $\mathcal F_W(u)$ of a subspace foliation are indexed with $u\in W^\perp$, we shall abuse notation and also write $\mathcal F_W(u)=P_\pi(W+u)$ for any $u\in \R^n$.

\subsection{Compactness} 
We begin by analyzing whether the leaves of $\mathcal F_W$ are compact.

\begin{proposition}\label{thm:leavescompact}
The leaves of $\mathcal F_W$ are compact if and only if  $W$ is $L_\pi$-generated.
\end{proposition}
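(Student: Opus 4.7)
The plan is to identify each leaf as an explicit quotient of an affine subspace by a discrete group of isometries, and then reduce compactness to the question of whether $L_\pi\cap W$ is a full lattice in $W$. Fix any $u\in\R^n$ and consider the affine stabilizer
\[
\pi_W^u=\bigl\{(A,v)\in\pi:(A,v)(W+u)=W+u\bigr\}.
\]
Because $W$ is $H_\pi$-invariant, any $(A,v)\in\pi$ carries $W+u$ onto the parallel affine subspace $W+Au+v$. Two parallel affine subspaces with common direction $W$ coincide as soon as they share a single point, so $(A,v)\in\pi_W^u$ if and only if $Au+v-u\in W$, and, crucially, any $(A,v)\in\pi$ which identifies two points of $W+u$ under the covering map must already belong to $\pi_W^u$. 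Consequently, $P_\pi|_{W+u}$ is exactly the quotient map $W+u\to(W+u)/\pi_W^u$, which we identify with the leaf $\mathcal F_W(u)$.

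Next I would record two structural facts about this stabilizer. First, $\pi_W^u\cap L_\pi=\{(\mathrm{Id},\ell):\ell\in L_\pi\cap W\}$, naturally identified with the translation group $L_\pi\cap W$. Second, since $L_\pi$ has finite index in $\pi$, the subgroup $L_\pi\cap W$ has finite index in $\pi_W^u$; using normality of $L_\pi$ in $\pi$ together with $H_\pi$-invariance of $W$, it is also normal in $\pi_W^u$, so the finite quotient group $\pi_W^u/(L_\pi\cap W)$ is well defined and acts on $(W+u)/(L_\pi\cap W)$.

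For ($\Leftarrow$), assume $W$ is $L_\pi$-generated. Then $L_\pi\cap W$ is a full lattice in $W$, so it acts cocompactly on $W+u$ by translations and $(W+u)/(L_\pi\cap W)$ is a flat torus; the finite further quotient by $\pi_W^u/(L_\pi\cap W)$ yields the compact leaf $\mathcal F_W(u)$. For ($\Rightarrow$), assume $\mathcal F_W(u)$ is compact. The finite quotient map $(W+u)/(L_\pi\cap W)\to(W+u)/\pi_W^u=\mathcal F_W(u)$ is proper, so $(W+u)/(L_\pi\cap W)$ is also compact. Because $L_\pi\cap W$ acts on $W+u$ purely by translations, cocompactness of this discrete translation action forces $L_\pi\cap W$ to be a full lattice in $W$, i.e.\ $W$ is $L_\pi$-generated.

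The main delicate point is the first step — showing that $P_\pi|_{W+u}$ really is the quotient by $\pi_W^u$, rather than some coarser identification. This rests squarely on the parallelism $(A,v)(W+u)=W+(Au+v)$, guaranteed by $H_\pi$-invariance of $W$, combined with the elementary fact that two parallel affine subspaces either coincide or are disjoint. Once this identification is in hand, the remaining arguments reduce to the standard principles that cocompactness passes to and from finite-index subgroups and that discrete cocompact translation subgroups of a Euclidean space are precisely the full lattices.
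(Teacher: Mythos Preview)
Your argument is correct. The approach differs from the paper's, though the underlying idea is the same.

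The paper factors $P_\pi$ through the torus $\R^n/L_\pi\to M_\pi$; since this last map is a finite Riemannian covering, compactness of $\mathcal F_W(u)$ is equivalent to compactness of the image of $W+u$ in $\R^n/L_\pi$, and for $u=0$ this is exactly Proposition~\ref{thm:charLgen}. You instead work at the level of the leaf: you identify $\mathcal F_W(u)$ with $(W+u)/\pi_W^u$ (this is the content of the paper's Proposition~\ref{thm:Bieberbachleaf}, which you are essentially re-deriving here), observe that $L_\pi\cap W$ is a finite-index normal subgroup of $\pi_W^u$, and then transfer compactness across the finite quotient $(W+u)/(L_\pi\cap W)\to(W+u)/\pi_W^u$. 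Both routes reduce to the same core fact---$W/(L_\pi\cap W)$ is compact iff $L_\pi\cap W$ spans $W$---but the paper reaches it via the ambient torus and the previously established Proposition~\ref{thm:charLgen}, whereas your argument is self-contained and anticipates the leaf-stabilizer machinery the paper only develops later in Section~\ref{sec:foliation}. Your version is longer but has the advantage of not depending on the equivalence ``image closed in $V/L$ $\Leftrightarrow$ $P_{W^\perp}(L)$ discrete'' worked out in Proposition~\ref{thm:charLgen}.
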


\begin{proof}
The projection $P_\pi\colon\mathds R^n\to M_\pi$ factors through the projections $\mathds R^n\to\mathds R^n/L_\pi$ and
$\mathds R^n/L_\pi\to M_\pi$. Thus, it suffices to show that, for all $v_0\in\mathds R^n$, the image of the affine subspace $W+v_0\subset\mathds R^n$ in the quotient $\mathds R^n/L_\pi$ is compact (or, equivalently, closed) if and only if $W$ is spanned by $W\cap L_\pi$. Clearly, it is sufficient to consider the case $v_0=0$; this is precisely the result of Proposition~\ref{thm:charLgen}. 
\end{proof}

A version of the above result (for the covering torus) appears in \cite[Lemma~4.2]{derdzinski-piccione}.

\begin{proposition}\label{thm:closureleaves}
The leaves of the subspace foliation $\mathcal F_{\widehat W}$,
where $\widehat W$ is the $L_\pi$-closure of the $H_\pi$-invariant subspace $W$, are the closures of the leaves of $\mathcal F_W$.
\end{proposition}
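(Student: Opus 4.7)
The plan is to prove the two inclusions $\overline{\mathcal F_W(u)}\subseteq \mathcal F_{\widehat W}(u)$ and $\mathcal F_{\widehat W}(u)\subseteq\overline{\mathcal F_W(u)}$ for each $u\in\R^n$ separately. The first inclusion is almost immediate: since $W\subseteq \widehat W$, we trivially have $\mathcal F_W(u)\subseteq \mathcal F_{\widehat W}(u)$; and because $\widehat W$ is $L_\pi$-generated by definition, Proposition~\ref{thm:leavescompact} tells us the leaf $\mathcal F_{\widehat W}(u)$ is compact, hence closed in $M_\pi$, so it already contains $\overline{\mathcal F_W(u)}$.

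For the reverse inclusion I would use the concrete description of the $L_\pi$-closure provided by Remark~\ref{thm:remquotientorthogonal} and Proposition~\ref{xtens}(e). Namely, identify $V/W$ with $W^\perp$ and the quotient map $V\to V/W$ with $P_{W^\perp}$; then $\widehat W=W\oplus K$ where $K\subseteq W^\perp$ is the connected component of $\overline{P_{W^\perp}(L_\pi)}$ containing $0$, and
\begin{equation*}
P_{W^\perp}(L_\pi\cap\widehat W)=K\cap P_{W^\perp}(L_\pi)\text{ is dense in }K.
\end{equation*}
Fix an arbitrary point $P_\pi(w+u+k)\in\mathcal F_{\widehat W}(u)$ with $w\in W$ and $k\in K$. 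Using the density above, choose a sequence $\ell_n\in L_\pi\cap\widehat W$ with $k_n:=P_{W^\perp}(\ell_n)\to k$, and write $\ell_n=w_n+k_n$ where $w_n\in W$. Since $\ell_n\in L_\pi$ acts trivially after applying $P_\pi$, we get
\begin{equation*}
P_\pi(w+u+k_n)=P_\pi(w+u+k_n-\ell_n)=P_\pi(w-w_n+u)\in\mathcal F_W(u),
\end{equation*}
and letting $n\to\infty$ yields $P_\pi(w+u+k)\in\overline{\mathcal F_W(u)}$, as required.

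The two potential technical wrinkles are (i) confirming that $\mathcal F_{\widehat W}(u)$ is in fact closed (which follows from compactness by Proposition~\ref{thm:leavescompact}, after invoking Proposition~\ref{thm:Lclosgroupinv} to ensure $\widehat W$ is still $H_\pi$-invariant so that it defines a subspace foliation) and (ii) making sure the density statement from Proposition~\ref{xtens}(e) is applied in the orthogonal-complement picture, which is precisely what Remark~\ref{thm:remquotientorthogonal} lets us do. I expect (ii) to be the only substantive step; everything else is a bookkeeping exercise with the identification $\widehat W=W\oplus K$.
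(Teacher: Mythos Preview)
Your argument is correct, but it differs from the paper's. For the reverse inclusion the paper factors through the torus $\R^n/L_\pi$ and argues abstractly: the closure of the image of $W$ in $\R^n/L_\pi$ is a closed subgroup, hence (via Proposition~\ref{thm:charLgen}) corresponds to an $L_\pi$-generated subspace $W'\supseteq W$, and the \emph{minimality} of $\widehat W$ among $L_\pi$-generated subspaces containing $W$ forces $W'=\widehat W$. No explicit approximation is built. Your route instead unpacks the structural description $\widehat W=W\oplus K$ and invokes the density statement of Proposition~\ref{xtens}\eqref{itm:e} to manufacture, for each target point, an explicit sequence in $\mathcal F_W(u)$ converging to it. The paper's proof is shorter and uses only the defining property of $\widehat W$; yours is more hands-on and makes the approximation visible, at the cost of relying on the heavier Proposition~\ref{xtens}. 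Both are valid; the paper's approach has the advantage that it would go through verbatim for any minimal closed overgroup, without needing the fine structure of the $L_\pi$-closure.
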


\begin{proof}
Clearly, each leaf of $\mathcal F_W$ is contained in a leaf of $\mathcal F_{\widehat W}$, which is closed by Proposition~\ref{thm:leavescompact}. As in the proof of Proposition~\ref{thm:leavescompact}, the result follows if we show that the projection of the affine subspace $W+v_0$ on the torus $\R^n/L_\pi$ is dense in the projection of $\widehat W+v_0$. As before, it suffices to consider $v_0=0$. The closure of the projection of $W$ on $\mathds R^n/L_\pi$ is a closed subgroup of $\R^n/L_\pi$, which hence corresponds to an $L_\pi$-generated subspace $W'\subset\R^n$ that contains $W$. Since the projection of $\widehat W$ is a closed subgroup containing the projection of $W$, we have $W'\subset \widehat W$. On the other hand, $\widehat W$ is the smallest $L_\pi$-generated subspace containing $W$, so $W'=\widehat W$.
\end{proof}

\begin{remark}
In foliation theory, the \emph{closure} $\overline{\mathcal F}$ of a (possibly singular) Riemannian foliation $\mathcal F$ on $M$ is defined as the partition of $M$ into the closures of leaves of $\mathcal F$, and this partition is again a (possibly singular) Riemannian foliation~\cite{molino,alexrad}. Thus, Proposition~\ref{thm:closureleaves}
can be restated as $\overline{\mathcal F_W}=\mathcal F_{\widehat W}$. Note that subspace foliations of flat manifolds are always \emph{regular}, i.e., all of its leaves have the same dimension.
\end{remark}

\begin{remark}\label{thm:remmetricstructleavessoace}
Since the leaves of the subspace foliation $\mathcal F_{\widehat W}$ are compact, of the same dimension, and equidistant, the leaf space $M_\pi/\mathcal F_{\widehat W}$ 
has the metric structure of a compact Riemannian orbifold.
Namely, distances on $M_\pi/\mathcal F_{\widehat W}$ are such that $\widehat W^\perp\ni v\mapsto\mathcal F_{\widehat W}(v)\in M_\pi/\mathcal F_{\widehat W}$ is a local isometry, i.e., a Riemannian covering map.
Furthermore, since $\mathcal F_{\widehat W}$ is hyperpolar, the Riemannian orbifold $M_\pi/\mathcal F_{\widehat W}$ is flat.
\end{remark}

\begin{remark}\label{intor}
Recall from Subsection~\ref{subsec:covtorus} that the projection $P_\pi\colon \R^n\to M_\pi$ factors as $\R^n \to \R^n /L_\pi \to M_\pi$, and the latter projection identifies $M_\pi$ with $(\R^n /L_\pi)/ H_\pi$, cf.~\eqref{eq:holactiontorus}.
Both $W$ and its $L_\pi$-closure $\widehat W$ give rise to subspace foliations on the torus $\R^n/L_\pi$, which we also denote by $\mathcal F_W$ and $\mathcal F_{\widehat W}$, respectively. These subspace foliations of $\R^n/L_\pi$ are invariant under the translational action of $\R^n/L_\pi$ on itself, and the leaves of $\mathcal F_{\widehat W}$ are pairwise isometric tori, see Proposition~\ref{thm:leavescompact} and also \cite[Lemma 4.2]{derdzinski-piccione}. Moreover, their images under the projection $\R^n /L_\pi \to M_\pi$ are precisely the leaves of the subspace foliation $\mathcal F_{\widehat W}$ on $M_\pi$, cf.~\cite[Thm~7.1(ii)]{derdzinski-piccione}.
\end{remark}

As claimed in the Introduction, every closed flat manifold $M_\pi$ of dimension $n\geq2$ admits nontrivial subspace foliations $\mathcal F_W$ with compact leaves, as a consequence of Theorem~\ref{thm:hissszczepa91} and Proposition~\ref{thm:leavescompact}.
More precisely, there is a basis $\{\ell_1,\ldots,\ell_n\}$ of $L_\pi$ and $1\leq k\leq n-1$ such that $\{\ell_1,\ldots,\ell_k\}$ spans an $H_\pi$-invariant subspace $W$. Indeed, by Theorem~\ref{thm:hissszczepa91}, one can find $\{\ell_1,\ldots,\ell_k\}\subset L_\pi$ whose $\mathds Q$-span is $H_\pi$-invari\-ant, so the claim follows from Lemma~\ref{thm:ZbasisLgen}. 
Moreover, Proposition~\ref{thm:ExistcomplHinvLgen} yields an even stronger conclusion, as $W$ has an $H_\pi$-invariant and $L_\pi$-generated complement~$W'$.

\begin{corollary}\label{thm:flatfoliationstructure}
Every closed flat manifold $M_\pi$ admits a pair of nontrivial \emph{strongly transversal} subspace foliations $\mathcal F_W$ and $\mathcal F_{W'}$ with compact leaves, that is, such that for all $p\in M_\pi$, $T_pM_\pi$ is the direct sum of the tangent spaces to the leaves through $p$ of each of these foliations.
\end{corollary}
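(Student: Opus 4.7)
The plan is to assemble the corollary by stringing together the tools established earlier in the paper. First, I would apply the Hiss--Szczepa\'nski Theorem (Theorem~\ref{thm:hissszczepa91}) to the rational holonomy representation on $L_\pi\otimes_{\mathds Z}\mathds Q$, producing a proper nontrivial $H_\pi$-invariant rational subspace $V_{\mathds Q}\subset L_\pi\otimes_{\mathds Z}\mathds Q$. Clearing denominators in any $\mathds Q$-basis of $V_{\mathds Q}$ yields elements of $L_\pi$ that span (over $\mathds R$) a proper nontrivial $H_\pi$-invariant subspace $W\subset\R^n$ which is automatically $L_\pi$-generated. By Proposition~\ref{thm:leavescompact}, the leaves of $\mathcal F_W$ are then compact.

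Next, I would invoke Proposition~\ref{thm:ExistcomplHinvLgen} to obtain a complement $W'\subset\R^n$ of $W$ that is simultaneously $H_\pi$-invariant and $L_\pi$-generated, so that $\R^n=W\oplus W'$. Since $W$ is a proper nontrivial subspace, so is $W'$, and a second application of Proposition~\ref{thm:leavescompact} shows that $\mathcal F_{W'}$ also has compact leaves.

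For the strong transversality, I would use that the covering projection $P_\pi\colon\R^n\to M_\pi$ is a local isometry. Fix $p\in M_\pi$ and lift it to some $u\in\R^n$; then, through the isomorphism $\dd P_\pi(u)\colon\R^n\to T_pM_\pi$, the tangent space to the leaf $\mathcal F_W(u)$ at $p$ corresponds to $W$ and the tangent space to $\mathcal F_{W'}(u)$ at $p$ corresponds to $W'$. The direct sum decomposition $\R^n=W\oplus W'$ therefore transfers to $T_pM_\pi=T_p\mathcal F_W\oplus T_p\mathcal F_{W'}$, which is exactly strong transversality.

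The only genuine obstacle here is arranging for the complement $W'$ to be $L_\pi$-generated \emph{and} $H_\pi$-invariant at the same time: a generic direct-sum complement will have neither property, and a generic $H_\pi$-invariant one (say, the Euclidean orthogonal complement $W^\perp$) need not be $L_\pi$-generated. This difficulty has, however, already been resolved by the averaging argument in Proposition~\ref{thm:ExistcomplHinvLgen}, so the proof of the corollary reduces to the clean bookkeeping sketched above.
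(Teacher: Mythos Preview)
Your argument is correct and follows essentially the same route as the paper: the discussion preceding the corollary uses Theorem~\ref{thm:hissszczepa91} to produce an $H_\pi$-invariant $L_\pi$-generated subspace $W$, then invokes Proposition~\ref{thm:ExistcomplHinvLgen} for the complement $W'$, with compactness of leaves coming from Proposition~\ref{thm:leavescompact}. The strong transversality, which you spell out explicitly via the local isometry $P_\pi$, is left implicit in the paper as an immediate consequence of the direct sum $\R^n=W\oplus W'$.
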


\subsection{Flat structure of leaves}\label{sub:geomleaves}
Henceforth, up to replacing $W$ by its $L_\pi$-closure, assume that $W\subset\R^n$ is $H_\pi$-invariant and $L_\pi$-generated. In particular, the leaves $\mathcal F_W(u)$, $u\in W^\perp$, are compact and totally geodesic submanifolds of $M_\pi$, and hence closed flat manifolds themselves. Thus, intrinsically, each leaf $\mathcal F_W(u)$ is isometric to $W/\pi_W(u)$, for some Bieberbach group $\pi_W(u)\subset\Iso(W)$, which we now identify.

\begin{proposition}\label{thm:Bieberbachleaf}
For all $u\in\mathds R^n$, the Bieberbach group of $\mathcal F_W(u)$ is isomorphic to the subgroup $G_{W}(u)\subset\pi$ that preserves the affine subspace $W+u$, namely
\begin{equation}\label{eq:defGuW}
G_{W}(u)=\big\{(A,v)\in\pi:(A-\mathrm{Id})u+v\in W\big\}.
\end{equation}
\end{proposition}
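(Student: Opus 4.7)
The plan is to show that $G_W(u)$ is precisely the setwise $\pi$-stabilizer of the affine subspace $W+u$, and that its action on $W+u$ is faithful, so that under the natural isometric identification $W+u\cong W$ the group $G_W(u)$ becomes the sought Bieberbach group of the leaf $\mathcal{F}_W(u)=(W+u)/G_W(u)$.

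First, I will verify that $G_W(u)$ is exactly the setwise stabilizer of $W+u$. Since $W$ is $H_\pi$-invariant, for any $(A,v)\in\pi$ one has $AW=W$, so $(A,v)(W+u)=W+Au+v$, which coincides with $W+u$ if and only if $(A-\mathrm{Id})u+v\in W$, matching \eqref{eq:defGuW}. Conversely, if $(A,v)\in\pi$ merely sends a single point $x\in W+u$ to a point $y\in W+u$, then $y$ lies in both $W+(Au+v)$ and $W+u$; as these two affine subspaces are parallel to $W$ and share the point $y$, they coincide, so $(A,v)$ already stabilizes $W+u$ setwise. Hence the identifications on $W+u$ produced by the $\pi$-action are exactly those induced by $G_W(u)$, giving a canonical identification of the leaf $\mathcal{F}_W(u)=P_\pi(W+u)$ with $(W+u)/G_W(u)$.

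Next, I use the isometry $W+u\to W$, $x\mapsto x-u$, to transport the $G_W(u)$-action to an action on $W$ by isometries, yielding a homomorphism
\[
\phi\colon G_W(u)\longrightarrow\Iso(W),\qquad (A,v)\longmapsto\bigl(A|_W,\,(A-\mathrm{Id})u+v\bigr),
\]
whose image acts on $W$ with quotient isometric to $\mathcal{F}_W(u)$. Since $\mathcal{F}_W(u)$ is a compact flat manifold and $G_W(u)\subset\pi$ inherits torsion-freeness, once $\phi$ is shown to be injective, the image $\phi(G_W(u))$ is automatically a Bieberbach group on $W$ isomorphic to $G_W(u)$, which is exactly the conclusion sought.

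The main subtlety lies in establishing this injectivity. Suppose $(B,w)\in\ker\phi$, i.e., $(B,w)$ acts trivially on $W+u$. Evaluating at the point $u$ yields $w=(\mathrm{Id}-B)u$, and evaluating at $u+\eta$ for arbitrary $\eta\in W$ then forces $B\eta=\eta$, so $W\subseteq\ker(B-\mathrm{Id})$. By \eqref{eq:orthimker} we obtain $\mathrm{Im}(B-\mathrm{Id})\subseteq W^\perp$, and in particular $w=-(B-\mathrm{Id})u\in\mathrm{Im}(B-\mathrm{Id})=\ker(B-\mathrm{Id})^\perp$. Letting $k$ denote the order of $B\in H_\pi$, the identity $\mathrm{Id}+B+\cdots+B^{k-1}=k\,P_{\ker(B-\mathrm{Id})}$ recalled in Section~\ref{sec:aux} shows that this operator annihilates $w$, hence $(B,w)^k=(\mathrm{Id},0)$. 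Torsion-freeness of $\pi$ then forces $(B,w)=(\mathrm{Id},0)$, establishing that $\phi$ is injective and completing the proof.
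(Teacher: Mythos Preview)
Your proof is correct and follows essentially the same strategy as the paper: identify $G_W(u)$ as the setwise stabilizer of $W+u$, observe that any $(A,v)\in\pi$ sending a single point of $W+u$ into $W+u$ already stabilizes the whole affine subspace, and conclude that the restriction homomorphism to $\Iso(W+u)$ is injective. The only difference is in the injectivity step: you argue indirectly by showing that any $(B,w)$ in the kernel satisfies $(B,w)^k=(\mathrm{Id},0)$ and then invoke torsion-freeness, whereas the paper simply notes that an element of $\ker\phi$ fixes every point of $W+u$, hence must be trivial because $\pi$ acts without fixed points---the same fact (torsion-free $\Leftrightarrow$ free action) used more directly.
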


\begin{proof}
A straightforward computation shows that \eqref{eq:defGuW} is the subgroup of $\pi$ consisting of elements that preserve $W+u$.
We now argue that $G_W(u)$ is isomorphic to the fundamental group of $\mathcal F_W(u)$. First, note that if $(A,v)$ maps some point in $W+u$ to some other point in $W+u$, then $(A,v)\in G_W(u)$. Namely, since $A$ preserves $W$, $(A,v)$ maps $W+u$ to some affine subspace of $\mathds R^n$ which is parallel to $W$. Thus, $(A,v)$ preserves $W+u$, since two distinct parallel affine subspaces are disjoint.

Clearly, the action of $G_W(u)$ on $W+u$ is properly discontinuous, and restricting the projection $P_\pi$ to $W+u$ gives a continuous surjection $P_\pi\colon(W+u)\to\mathcal F_W(u)$. Two points $w+u,w'+u\in W+u$ have the same image under $P_\pi$ if and only if there is $(A,v)\in\pi$ with $A(w+u)+v=w'+u$, i.e., if and only if $w'=Aw+(A-\mathrm{Id})u+v$. From the above, such $(A,v)$ must belong to $G_W(u)$. Therefore, $P_\pi\colon(W+u)\to\mathcal F_W(u)$ is a covering map and $G_W(u)$ is the group of deck transformations. Since $W+u$ is simply-connected, this shows that the fundamental group of $\mathcal F_W(u)$ is isomorphic to the image of the restriction map:
\begin{equation}\label{eq:restrmap}
G_W(u)\ni (A,v)\longmapsto (A,v)\vert_{W+u}\in\Iso(W+u).
\end{equation}
Since $\pi$ acts without fixed points, \eqref{eq:restrmap} is an injective map, concluding the proof.
\end{proof}

We remark that a version of the above result appears in \cite[Thm 7.1 (ii) (a)]{derdzinski-piccione}.

\begin{corollary}\label{thm:bieberleaves}
The closed flat manifold $\mathcal F_W(u)$, $u\in\mathds R^n$, is isometric to the orbit space $W/\pi_W(u)$ of the Bieberbach group $\pi_W(u)$ on the Euclidean space $W$, where
\begin{equation*}
\pi_W(u)=\Big\{\big(A\vert_W,(A-\mathrm{Id})u+v\big)\in\Iso(W):(A,v)\in G_W(u)\Big\}.
\end{equation*}
\end{corollary}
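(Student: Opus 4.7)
The plan is to deduce the Corollary directly from Proposition \ref{thm:Bieberbachleaf} by transporting the action of $G_W(u)$ on the affine subspace $W+u$ to an isometric action on the vector space $W$ via the translation $T_u\colon W\to W+u$, $w\mapsto w+u$. Since $T_u$ is an affine isometry, conjugating the restriction map \eqref{eq:restrmap} by $T_u$ yields an injective group homomorphism $G_W(u)\to\Iso(W)$, and the quotient of $W$ by its image is canonically isometric to $(W+u)/G_W(u)=\mathcal F_W(u)$.

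First, I would verify the conjugation computation explicitly. For $(A,v)\in G_W(u)$, the restricted element $(A,v)|_{W+u}\in\Iso(W+u)$ sends $w+u$ to $A(w+u)+v=Aw+Au+v$. Writing this in the form $w'+u$ with $w'\in W$ requires
\begin{equation*}
w'=Aw+(A-\mathrm{Id})u+v,
\end{equation*}
which lies in $W$ precisely because $W$ is $H_\pi$-invariant (so $Aw\in W$) and $(A-\mathrm{Id})u+v\in W$ by the defining condition \eqref{eq:defGuW}. Consequently, the conjugated map $T_u^{-1}\circ(A,v)|_{W+u}\circ T_u$ is the element $\big(A|_W,(A-\mathrm{Id})u+v\big)\in\Iso(W)$ appearing in the definition of $\pi_W(u)$.

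Next, I would check that the assignment
\begin{equation*}
G_W(u)\ni(A,v)\longmapsto\big(A|_W,(A-\mathrm{Id})u+v\big)\in\Iso(W)
\end{equation*}
is a group homomorphism (immediate, as conjugation by $T_u$ is a group homomorphism of isometry groups) with image exactly $\pi_W(u)$ as defined. Injectivity follows from the injectivity of the restriction map \eqref{eq:restrmap} established in Proposition \ref{thm:Bieberbachleaf}, which itself rests on $\pi$ acting freely. Hence $G_W(u)\cong\pi_W(u)$, and $\pi_W(u)$ inherits from $G_W(u)$ the property of being a Bieberbach group.

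Finally, since $T_u$ is an equivariant isometry between $(W,\pi_W(u))$ and $(W+u,G_W(u))$, it descends to an isometry $W/\pi_W(u)\to(W+u)/G_W(u)$. Composing with the identification $(W+u)/G_W(u)\cong\mathcal F_W(u)$ from Proposition \ref{thm:Bieberbachleaf}, we obtain the desired isometry $W/\pi_W(u)\cong\mathcal F_W(u)$. There is no real obstacle here; the only thing to be careful about is ensuring the conjugation formula is recorded correctly and that the invariance conditions in \eqref{eq:defGuW} match the requirement that the conjugated translation part $(A-\mathrm{Id})u+v$ actually lies in $W$, so that $\pi_W(u)$ is genuinely a subgroup of $\Iso(W)$.
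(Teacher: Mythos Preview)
Your proof is correct and follows exactly the approach indicated in the paper, which states only that the result ``follows readily using conjugation with the isometry $(\mathrm{Id},u)\colon W\to W+u$.'' You have simply unpacked this one-line hint in full detail, verifying the conjugation formula and the descent to the quotient.
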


\begin{proof}
Follows readily using conjugation with the isometry $(\mathrm{Id},u)\colon W\to W+u$.
\end{proof}

We now identify the corresponding lattice $L_W(u)\subset W$, and holonomy group $H_W(u)\subset\O(W)$, such that $0\to L_W(u)\to \pi_W(u)\to H_W(u)\to 0$ is the short exact sequence yielded by the Bieberbach theorems applied to $\mathcal F_W(u)=W/\pi_W(u)$.

\begin{remark}\label{rem:holonomies}
We shall refer to $H_W(u)\subset \O(W)$ as the holonomy group of $\mathcal F_W(u)$, since it is identified with its holonomy group \emph{as a closed flat manifold}.
This is not to be confused with the leaf holonomy group $\mathrm{Hol}_p(\mathcal F_W(u))$, which is generated by parallel transports along loops based at $p\in \mathcal F_W(u)$ of vectors \emph{normal} to $\mathcal F_W(u)$.
More precisely, $\mathrm{Hol}_p(\mathcal F_W(u))$ is the image of $\pi_1(\mathcal F_W(u),p)\cong G_W(u)$ in the group of linear isometries of the normal space $\nu_p(\mathcal F_W(u))\cong W^\perp$, see~\cite{molino,radeschi}.
\end{remark}

From Corollary~\ref{thm:bieberleaves}, it is easy to give an abstract characterization of the holonomy $H_W(u)$ and the lattice $L_W(u)$ of $\pi_W(u)$. More precisely, $H_W(u)$ is the image of the map $G_W(u)\ni(A,v)\mapsto A\vert_W\in\O(W)$, while $L_W(u)=\big\{v\in W:\exists\,(A,v)\in G_W(u),\ \text{with}\ A\vert_W=\mathrm{Id}\big\}$.
In particular, $L_\pi\cap W\subset L_W(u)$ for all $u$; namely, for all $v\in L_\pi\cap W$, $(\mathrm{Id},v)\in G_W(u)$. It also follows that, given $u,u'\in\mathds R^n$,
\begin{equation}\label{eq:inclusionGLH}
G_W(u)\subset G_W(u')\;\Longrightarrow\; H_W(u)\subset H_W(u'),\, \text{and}\ L_W(u)\subset L_W(u'),
\end{equation}
and, if $u,u'\in W^\perp$,
\begin{equation}\label{eq:inclusionuu'}
G_W(u)\subset G_W(u')\;\Longrightarrow\; (A-\mathrm{Id})u=(A-\mathrm{Id})u',\;\text{for all}\,(A,v)\in G_W(u).
\end{equation}

\subsection{Algebraic description of the leaf space}\label{sub:algebraiccollapse}
We now describe the leaf space $M_\pi/\mathcal F_W$ as a compact flat orbifold, i.e., as the orbit space of a crystallographic~group.

\begin{lemma}\label{thm:proWperpLgen}
If $W\subset\mathds R^n$ is $L_\pi$-generated, then $P_{W^\perp}(L_\pi)$ is a lattice in $W^\perp$.
\end{lemma}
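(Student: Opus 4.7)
The plan is to combine two facts: the image $P_{W^\perp}(L_\pi)$ spans $W^\perp$ (easy), and it is discrete in $W^\perp$ (which has essentially already been proved above). Together these characterize a lattice in $W^\perp$.

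For spanning, I would note that since $L_\pi$ is a lattice in $\mathds R^n$ it contains a basis of $\mathds R^n$, and $P_{W^\perp}$ maps $\mathds R^n$ onto $W^\perp$; hence $P_{W^\perp}(L_\pi)$ generates $W^\perp$ as a real vector space. For discreteness, I would invoke Lemma~\ref{thm:Lclosuredenseprojection}, applied to $V=\mathds R^n$ and $L=L_\pi$, and identify the quotient $\mathds R^n/W$ with $W^\perp$ (and the quotient map with $P_{W^\perp}$) as in Remark~\ref{thm:remquotientorthogonal}: since $W$ is $L_\pi$-generated by hypothesis, the lemma gives that $P_{W^\perp}(L_\pi)$ is discrete.

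A concrete version that does not even require citing \ref{thm:Lclosuredenseprojection}: apply Lemma~\ref{thm:ZbasisLgen} to pick a $\mathds Z$-basis $\{\ell_1,\ldots,\ell_n\}$ of $L_\pi$ with $\{\ell_1,\ldots,\ell_k\}$ a basis of $W$, where $k=\dim W$. Then $P_{W^\perp}(\ell_i)=0$ for $i\le k$, so $P_{W^\perp}(L_\pi)$ is the $\mathds Z$-span of $P_{W^\perp}(\ell_{k+1}),\ldots,P_{W^\perp}(\ell_n)$. A linear relation $\sum_{j>k}c_j P_{W^\perp}(\ell_j)=0$ forces $\sum_{j>k}c_j\ell_j\in W$, but this vector also lies in $\operatorname{span}_{\mathds R}\{\ell_{k+1},\ldots,\ell_n\}$, and since $\{\ell_1,\ldots,\ell_n\}$ is an $\mathds R$-basis of $\mathds R^n$ this intersection is trivial; hence all $c_j=0$. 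So $P_{W^\perp}(\ell_{k+1}),\ldots,P_{W^\perp}(\ell_n)$ form a $\mathds Z$-basis of $P_{W^\perp}(L_\pi)$ and also an $\mathds R$-basis of the $(n-k)$-dimensional space $W^\perp$, proving that $P_{W^\perp}(L_\pi)$ is a lattice in $W^\perp$.

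There is no real obstacle here; the statement is essentially a restatement of the lattice-generated condition in terms of the orthogonal projection, and the whole argument is a couple of lines. The only mild care needed is to ensure the $\mathds Z$-basis from Lemma~\ref{thm:ZbasisLgen} is chosen so that its first $k$ elements lie in $W$, which is precisely what that lemma provides.
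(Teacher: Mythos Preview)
Your proposal is correct, and your ``concrete version'' is essentially identical to the paper's proof: both invoke Lemma~\ref{thm:ZbasisLgen} to obtain a $\mathds Z$-basis $\{\ell_1,\ldots,\ell_n\}$ of $L_\pi$ with $\{\ell_1,\ldots,\ell_k\}$ spanning $W$, and then observe that $P_{W^\perp}(\ell_{k+1}),\ldots,P_{W^\perp}(\ell_n)$ form a basis of $W^\perp$ and a $\mathds Z$-basis of $P_{W^\perp}(L_\pi)$. Your first approach via Lemma~\ref{thm:Lclosuredenseprojection} is also fine and amounts to the same computation packaged differently.
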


\begin{proof}
Choose a basis $\ell_1,\ldots,\ell_n$ as in Lemma~\ref{thm:ZbasisLgen}, so that
\begin{equation*}
P_{W^\perp}(L_\pi)=\span_\mathds Z\big\{P_{W^\perp}(\ell_{k+1}),\ldots,P_{W^\perp}(\ell_{n})\big\}.
\end{equation*}
Since $\span_\mathds R\big\{\ell_{k+1},\ldots,\ell_{n}\big\}$ is a complement of $W$, $\{P_{W^\perp}(\ell_{k+1}),\ldots,P_{W^\perp}(\ell_{n})\big\}$ is a basis of $W^\perp$, which concludes the proof.
\end{proof}

\begin{proposition}\label{thm:piperpcrystallographic}
Let $W\subset\mathds R^n$ be an $H_\pi$-invariant $L_\pi$-generated subspace. Then
\begin{equation}\label{eq:homoperp}
\pi\ni(A,v)\longmapsto\big(A\vert_{W^\perp},P_{W^\perp}(v)\big)\in\Iso(W^\perp)
\end{equation}
is a group homomorphism, and its image is a crystallographic subgroup of $\Iso(W^\perp)$.
\end{proposition}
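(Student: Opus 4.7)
The plan is to verify the homomorphism property by direct computation, and then show the image is crystallographic by exhibiting a lattice of translations of finite index inside it.

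First, I would observe that since $W$ is $H_\pi$-invariant and $H_\pi$ acts orthogonally, $W^\perp$ is also $H_\pi$-invariant; consequently each $A\in H_\pi$ commutes with $P_{W^\perp}$, and the restriction $A\vert_{W^\perp}\in\O(W^\perp)$ is well-defined. Denoting the map in \eqref{eq:homoperp} by $\Phi$, the homomorphism identity $\Phi\!\big((A_1,v_1)(A_2,v_2)\big)=\Phi(A_1,v_1)\,\Phi(A_2,v_2)$ then reduces to the computation
\[
P_{W^\perp}(A_1v_2+v_1)=A_1\vert_{W^\perp}\,P_{W^\perp}(v_2)+P_{W^\perp}(v_1),
\]
together with the group law $(A_1,v_1)(A_2,v_2)=(A_1A_2,A_1v_2+v_1)$ in both $\Iso(\R^n)$ and $\Iso(W^\perp)$. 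This is routine.

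Next, to show $\Phi(\pi)$ is crystallographic, I would analyze its translation subgroup $T=\Phi(\pi)\cap W^\perp$. An element of $T$ is of the form $P_{W^\perp}(v)$ with $(A,v)\in\pi$ satisfying $A\vert_{W^\perp}=\mathrm{Id}$; such elements form a subgroup $\pi_0\subset\pi$ containing $L_\pi$. Since $\pi_0/L_\pi$ injects into $H_\pi$, it is finite, hence $T=P_{W^\perp}(\pi_0)$ contains $P_{W^\perp}(L_\pi)$ with finite index. By Lemma~\ref{thm:proWperpLgen}, the $L_\pi$-generation of $W$ makes $P_{W^\perp}(L_\pi)$ a full lattice in $W^\perp$; therefore so is~$T$.

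Finally, since the quotient $\Phi(\pi)/T$ embeds into the finite group $\{A\vert_{W^\perp}:A\in H_\pi\}$, the image $\Phi(\pi)$ is a finite extension of the lattice $T$. This makes $\Phi(\pi)$ discrete in $\Iso(W^\perp)$ and cocompact on $W^\perp$, so crystallographic. The only delicate ingredient is ensuring that the translation subgroup really is a lattice (rather than merely containing a lattice); this is the step where the $L_\pi$-generation hypothesis on $W$ enters through Lemma~\ref{thm:proWperpLgen}, and it is the only substantive obstacle.
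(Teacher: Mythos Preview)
Your proposal is correct. The homomorphism check and the use of Lemma~\ref{thm:proWperpLgen} for cocompactness match the paper exactly; the difference lies in how discreteness is obtained.

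The paper argues directly: given a sequence $(A_k,v_k)\in\pi$ with $\big(A_k\vert_{W^\perp},P_{W^\perp}(v_k)\big)\to(\mathrm{Id}_{W^\perp},0)$, finiteness of $H_\pi$ forces $A_k$ constant, and then discreteness of the lattice $P_{W^\perp}(L_\pi)$ (from Lemma~\ref{thm:proWperpLgen}) forces $P_{W^\perp}(v_k)$ eventually constant. You instead identify the full translation subgroup $T=\Phi(\pi)\cap W^\perp$, show it contains $P_{W^\perp}(L_\pi)$ with finite index (hence is itself a lattice), and conclude that $\Phi(\pi)$, being a finite extension of a lattice, is discrete and cocompact. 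Your route is slightly more structural and in fact anticipates Corollary~\ref{thm:holattcollapse}, where exactly this $T$ reappears as $L^\perp$; the paper's route is a shade more direct. One small remark on your closing sentence: the genuinely delicate point is not that $T$ might ``merely contain'' a lattice---a finite-index overgroup of a lattice in a vector space is automatically a lattice---but rather that without the $L_\pi$-generation hypothesis $P_{W^\perp}(L_\pi)$ need not be discrete at all, in which case neither is $T$.
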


\begin{proof}
This map is a group homomorphism since $P_{W^\perp}$ commutes with all $A\in H_\pi$. Its image contains the lattice $P_{W^\perp}(L_\pi)$, hence its action on $W^\perp$ is cocompact. To show it is discrete, it suffices to show that $(\mathrm{Id}_{W^\perp},0)$ is isolated. Suppose $(A_k,v_k)\in\pi$ is a sequence such that $\big(A_k\vert_{W^\perp},P_{W^\perp}(v_k)\big)$ converges to $(\mathrm{Id}_{W^\perp},0)$. Since $H_\pi$ is finite, we may assume that $A_k = A$ for all $k$, with $A\in H_\pi$ such that $A\vert_{W^\perp}=\mathrm{Id}_{W^\perp}$. We may also assume that $v_k=v+\ell_k$, where $(A,v)\in\pi$ and $\ell_k\in L_\pi$ for all $k$. Then, $P_{W^\perp}(v_k)=P_{W^\perp}(v)+P_{W^\perp}(\ell_k)$, and the set 
$\big\{P_{W^\perp}(\ell_k):k\in\mathds N\big\}$ is closed in $W^\perp$. It follows that $P_{W^\perp}(v+\ell_k)=0$ for sufficiently large $k$, i.e., the sequence
$\big(A_k\vert_{W^\perp},P_{W^\perp}(v_k)\big)$ eventually becomes constant, so $(\mathrm{Id}_{W^\perp},0)$ is isolated.
\end{proof}

\begin{theorem}\label{thm:crystallographiccollapse}
Let $M_\pi=\R^n/\pi$ be a closed flat manifold, and $W\subset \R^n$ be an $H_\pi$-invariant and $L_\pi$-generated subspace.
The leaf space $M_\pi/\mathcal F_W$ is isometric to the flat orbifold $W^\perp/\pi^\perp$, where $\pi^\perp\subset\Iso(W^\perp)$ is the crystallographic group given by the image of the homomorphism \eqref{eq:homoperp}.
\end{theorem}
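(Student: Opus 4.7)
The plan is to construct a natural isometry between $M_\pi/\mathcal F_W$ and $W^\perp/\pi^\perp$ by exploiting the hyperpolar structure of the subspace foliation: the flat submanifold $W^\perp\subset\R^n$ meets every leaf orthogonally, so after projection it should serve as a global section for $M_\pi\to M_\pi/\mathcal F_W$. Concretely, I would study the composition
\begin{equation*}
\Phi\colon W^\perp \longrightarrow M_\pi \longrightarrow M_\pi/\mathcal F_W, \qquad \Phi(u)=\mathcal F_W(u),
\end{equation*}
where the first arrow is the restriction of $P_\pi$, and check that it descends through $\pi^\perp$ to the desired isometry $\overline\Phi$.

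Surjectivity of $\Phi$ is immediate from the parametrization \eqref{eq:leaves}, since every leaf of $\mathcal F_W$ has the form $\mathcal F_W(u)$ with $u\in W^\perp$. To identify the fibers of $\Phi$, I use Remark~\ref{thm:remsameleaf}: for $u,u'\in W^\perp$, one has $\Phi(u)=\Phi(u')$ iff there exists $(A,v)\in\pi$ with $Au+v-u'\in W$. Since $W$ is $H_\pi$-invariant, so is $W^\perp$, and each $A\in H_\pi$ commutes with $P_{W^\perp}$. Applying $P_{W^\perp}$ to this relation and using that $u,u'\in W^\perp$ converts the condition into
\begin{equation*}
A\vert_{W^\perp}\,u + P_{W^\perp}(v) \;=\; u',
\end{equation*}
which says exactly that $u$ and $u'$ lie in the same orbit of $\pi^\perp$ on $W^\perp$. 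Hence $\Phi$ descends to a bijection $\overline\Phi\colon W^\perp/\pi^\perp \to M_\pi/\mathcal F_W$.

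Finally, I upgrade $\overline\Phi$ to an isometry. By Proposition~\ref{thm:piperpcrystallographic}, $\pi^\perp$ is crystallographic, so $W^\perp/\pi^\perp$ carries the natural flat orbifold metric induced from $W^\perp$. On the other hand, Remark~\ref{thm:remmetricstructleavessoace} (applied with $\widehat W=W$) says that $W^\perp\ni u\mapsto\mathcal F_W(u)\in M_\pi/\mathcal F_W$ is a Riemannian covering of flat orbifolds; equivalently, the leaf-space metric is obtained as the quotient of the flat metric on $W^\perp$ by the equivalence relation whose classes are the fibers of $\Phi$. Combining this with the fiber identification from the previous paragraph yields that $\overline\Phi$ is an isometry. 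The substantive point hidden in Remark~\ref{thm:remmetricstructleavessoace}, and the step I expect to be the main obstacle were it not already available, is precisely that the leaf-space distance on $M_\pi/\mathcal F_W$ genuinely coincides with the one measured along the transverse slice $W^\perp$ (and not merely that the spaces are homeomorphic); this is where hyperpolarity of $\mathcal F_W$ is essential.
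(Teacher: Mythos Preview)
Your proposal is correct and follows essentially the same route as the paper's proof: both use Remark~\ref{thm:remsameleaf} to show that $u,u'\in W^\perp$ define the same leaf precisely when they lie in the same $\pi^\perp$-orbit, and then invoke Remark~\ref{thm:remmetricstructleavessoace} to upgrade the resulting bijection $W^\perp/\pi^\perp\to M_\pi/\mathcal F_W$ to an isometry. The paper's write-up is simply more terse, compressing your three paragraphs into two sentences.
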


\begin{proof}
From Remark~\ref{thm:remsameleaf}, two elements $u,u'\in W^\perp$ define the same leaf if and only if there exists $(A,v)\in\pi$ such that $Au-u'+v\in W$, i.e., $\big(A\vert_{W^\perp},P_{W^\perp}(v)\big)u=u'$. 
Thus, the map $\mathfrak l\colon W^\perp/\pi^\perp\to M_\pi/\mathcal F_W$ that carries the $\pi^\perp$-orbit of $u\in W^\perp$ to the leaf $\mathcal F_W(u)$ is a well-defined bijection, and a local isometry by Remark~\ref{thm:remmetricstructleavessoace}, hence an isometry.
\end{proof}

\begin{corollary}\label{thm:holattcollapse}
The holonomy group $H^\perp\subset\O(W^\perp)$ and lattice $L^\perp\subset W^\perp$ associated to the flat orbifold $M_\pi/\mathcal F_W=W^\perp/\pi^\perp$ by the Bieberbach Theorems are:
\begin{enumerate}[\rm (i)]
\item $H^\perp\subset\O(W^\perp)$ is the image of the map $H_\pi\ni A\mapsto A\vert_{W^\perp}\in\O(W^\perp)$;
\item $L^\perp\subset W^\perp$ is the image of the map $\mathcal L_W\ni(A,v)\mapsto P_{W^\perp}(v)\in W^\perp$, where $\mathcal L_{W}=\big\{(A,v)\in\pi:A\vert_{W^\perp}=\mathrm{Id}_{W^\perp}\!\big\}$. This is a lattice in $W^\perp$ which contains $P_{W^\perp}(L_\pi)$ as a finite index subgroup, and therefore $L^\perp\otimes\mathds Q=P_{W^\perp}(L_\pi)\otimes\mathds Q$.
\end{enumerate}
\end{corollary}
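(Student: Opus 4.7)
The plan is to invoke Theorem~\ref{thm:crystallographiccollapse}, which identifies the flat orbifold $M_\pi/\mathcal F_W$ with $W^\perp/\pi^\perp$, where $\pi^\perp\subset\Iso(W^\perp)$ is the image of the homomorphism~\eqref{eq:homoperp} and is crystallographic by Proposition~\ref{thm:piperpcrystallographic}. From there I would simply read off the holonomy group and lattice of $\pi^\perp$ using the standard algebraic dictionary supplied by the Bieberbach theorems: for any crystallographic $\Gamma\subset\O(V)\ltimes V$, the associated holonomy group is the image of the projection $\Gamma\to\O(V)$ onto the linear part, and the associated lattice is $\{v\in V:(\mathrm{Id}_V,v)\in\Gamma\}$.

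Applying these identifications to $\pi^\perp$ is essentially bookkeeping. For (i), every element of $\pi^\perp$ has the form $\bigl(A|_{W^\perp},P_{W^\perp}(v)\bigr)$ for some $(A,v)\in\pi$, so its image in $\O(W^\perp)$ runs through exactly $\{A|_{W^\perp}:A\in H_\pi\}$. For the first part of (ii), such an element has trivial linear part precisely when $A|_{W^\perp}=\mathrm{Id}_{W^\perp}$, i.e.\ when $(A,v)\in\mathcal L_W$; the corresponding translation part is then $P_{W^\perp}(v)$, yielding the claimed description of $L^\perp$.

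The inclusion $P_{W^\perp}(L_\pi)\subseteq L^\perp$ is immediate from $L_\pi\subseteq\mathcal L_W$, since pure translations have identity linear part on all of $\mathds R^n$. The one nontrivial input that I would isolate is the lattice-ness of $P_{W^\perp}(L_\pi)$: this is exactly the content of Lemma~\ref{thm:proWperpLgen} and relies crucially on the assumption that $W$ is $L_\pi$-generated (without which $P_{W^\perp}(L_\pi)$ could be merely dense in $W^\perp$, and the rational identification would collapse). Once both $P_{W^\perp}(L_\pi)$ and $L^\perp$ are known to be full lattices in $W^\perp$, their nesting forces finite index, and consequently $L^\perp\otimes\mathds Q=P_{W^\perp}(L_\pi)\otimes\mathds Q$. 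The main obstacle, therefore, is really the verification of this lattice property; the remainder is a direct unpacking of the definitions.
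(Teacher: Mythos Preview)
Your proposal is correct and follows essentially the same approach as the paper: invoke Theorem~\ref{thm:crystallographiccollapse} to identify $M_\pi/\mathcal F_W$ with $W^\perp/\pi^\perp$, read off $H^\perp$ and $L^\perp$ as the linear and pure-translation parts of $\pi^\perp$, and then use Lemma~\ref{thm:proWperpLgen} to see that $P_{W^\perp}(L_\pi)$ is a full lattice, forcing finite index in $L^\perp$. Your write-up is simply more explicit about the bookkeeping than the paper's, which compresses these steps into a couple of sentences.
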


\begin{proof}
The identifications of the holonomy and lattice of $W^\perp/\pi^\perp$ with $H^\perp$ and $L^\perp$ respectively follow from Theorem~\ref{thm:crystallographiccollapse}. Clearly, $L^\perp$ contains $P_{W^\perp}(L_\pi)$, which by Lemma~\ref{thm:proWperpLgen} is also a lattice in $W^\perp$. Thus, $P_{W^\perp}(L_\pi)$ has finite index in $L^\perp$.
\end{proof}

\section{Collapse of flat manifolds}\label{sec:collapse}
In this section, we give a proof of Theorem~\ref{mainthm:A} stated in the Introduction by combining Theorem~\ref{thm:crystallographiccollapse} with an identification of the Gromov--Hausdorff limit of the collapsing sequence of flat manifolds $(M_\pi,g_W^s)$ as $s\searrow0$. 

Recall that, given compact metric spaces $(X,d_X)$ and $(Y,d_Y)$, an \emph{$\varepsilon$-approximation} from $X$ to $Y$ is a map $f\colon X\to Y$ such that $\left\vert d_X(x_1,x_2)-d_Y\big(f(x_1),f(x_2)\big)\right\vert<\varepsilon$ for all $x_1,x_2\in X$, and such that $Y$ is in the $\varepsilon$-neghborhood of $f(X)$. It is well known that a sequence of compact metric spaces $(X_n,d_n)$ converges in Gromov--Hausdorff sense to a compact metric space $(X_\infty,d_\infty)$ if and only if for all $\varepsilon>0$ there exists $N_\varepsilon\in\mathds N$ and  $\varepsilon$-approximations $f_n^\varepsilon\colon X_n\to X_\infty$ and $g_n^\varepsilon\colon X_\infty\to X_n$ for all $n\ge N_\varepsilon$.

\begin{lemma}\label{ghcnv}
Let $\rho^s$, $s\in(0,1]$, be distance functions on $M$,  and let $\varPhi\colon M \to Q$ be a map
onto the metric space $(Q,\delta)$ such that 
\begin{equation*}
\delta(\varPhi(x),\varPhi(y))\le\rho^s(x,y)\le\delta(\varPhi(x),\varPhi(y))+d(s)
\end{equation*}
for all $x,y\in M$, where $(0,1]\ni s\mapsto d(s)\in [0,\infty)$ is a function such that $d(s)\to0$ as $s\searrow0$.  Then the Gro\-mov--Haus\-dorff limit of $(M,\rho^s)$ as $s\searrow0$ is $(Q,\delta)$.
\end{lemma}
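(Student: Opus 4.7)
The plan is to invoke directly the $\varepsilon$-approximation characterization of Gromov--Hausdorff convergence recalled just above the lemma. The strategy is to use $\varPhi$ itself as the approximation in one direction, and any set-theoretic section of $\varPhi$ as the approximation in the reverse direction.

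First I would observe that the hypothesis immediately yields $\left|\rho^s(x,y)-\delta(\varPhi(x),\varPhi(y))\right|\le d(s)$ for all $x,y\in M$, so $\varPhi\colon (M,\rho^s)\to(Q,\delta)$ satisfies the almost-isometric condition with error $d(s)$. The density condition is trivial, since $\varPhi$ is surjective and hence $\varPhi(M)=Q$. Therefore $\varPhi$ is a $d(s)$-approximation from $(M,\rho^s)$ to $(Q,\delta)$.

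For the reverse direction, I would fix a map $\Psi\colon Q\to M$ with $\varPhi\circ\Psi=\operatorname{id}_Q$, which exists by surjectivity of $\varPhi$ (and the axiom of choice). Applying the hypothesis to $x=\Psi(q_1)$ and $y=\Psi(q_2)$ gives the almost-isometry estimate $\left|\rho^s(\Psi(q_1),\Psi(q_2))-\delta(q_1,q_2)\right|\le d(s)$ at once. For the density condition, I would compare an arbitrary $x\in M$ to $\Psi(\varPhi(x))$ using the upper bound in the hypothesis:
\begin{equation*}
\rho^s\bigl(x,\Psi(\varPhi(x))\bigr)\le \delta\bigl(\varPhi(x),\varPhi(\Psi(\varPhi(x)))\bigr)+d(s)=d(s),
\end{equation*}
so $\Psi(Q)$ is $d(s)$-dense in $(M,\rho^s)$. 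Hence $\Psi$ is also a $d(s)$-approximation. Given $\varepsilon>0$, pick $s_0$ so that $d(s)<\varepsilon$ for $s<s_0$; then both $\varPhi$ and $\Psi$ are $\varepsilon$-approximations, and the stated convergence follows from the characterization.

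There is essentially no obstacle here, since the lemma is little more than a repackaging of the hypothesis in the language of $\varepsilon$-approximations. The only mildly subtle point is that the section $\Psi$ need not be continuous or canonical; but the $\varepsilon$-approximation criterion imposes no such regularity, so this presents no difficulty.
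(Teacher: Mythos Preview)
Your proof is correct and follows essentially the same approach as the paper: use $\varPhi$ and a set-theoretic section of $\varPhi$ (whose existence requires only surjectivity and the Axiom of Choice) as the pair of $\varepsilon$-approximations once $d(s)<\varepsilon$. Your write-up is in fact more explicit than the paper's, which compresses the argument to two sentences and does not spell out the density check for the section.
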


\begin{proof}
Given $\varepsilon>0$, a pair of $\varepsilon$-ap\-prox\-i\-ma\-tions between $(M ,\rho^s)$ and $(Q,\delta)$ is provided, when $d(s)<\varepsilon$, by $\varPhi\colon M\to Q $ and any map $\theta\colon Q\to M$ with $\theta\circ\varPhi=\mathrm{Id}_M$. Note that $\theta$ need not be continuous, and exists by the Axiom of Choice.
\end{proof}

The following result is a crucial step in the proof of Theorem~\ref{mainthm:A}.

\begin{theorem}\label{unifo}
Let $W\subset\R^n$ be an $H_\pi$-invariant subspace, $\widehat W$ be its $L_\pi$-closure, and $\widehat{L}$ be the lattice in $\widehat{W}$ given by $\widehat L=L_\pi\cap\widehat W$. Consider the Riemannian metric induced by $\langle\cdot,\cdot\rangle_s=s^2 \langle\cdot,\cdot\rangle|_{W}\oplus \langle\cdot,\cdot\rangle|_{W^\perp}$ on the torus $\widehat{W}/\widehat{L}$, and denote by $\rho^s$ the corresponding distance function.
Then its diameter $d(s)=\operatorname{diam}(\widehat W/\widehat L,\rho^s)$ satisfies $\lim\limits_{s\searrow0} d(s)=0$. Moreover, the limit $\rho^0$ of these distance functions vanishes identically.
\end{theorem}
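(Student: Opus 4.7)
The plan is to exploit the explicit $\mathds Z$-basis of $\widehat L = L_\pi \cap \widehat W$ supplied by Proposition~\ref{xtens}: there is a basis $\{w_j, v_a\}$ of $\widehat L$ in which the $w_j$ generate $L_\pi \cap W$ (and span $W$), while the $w_j$ together with the $v_a$ span all of $\widehat W$. Identifying $V/W$ with $W^\perp$ as in Remark~\ref{thm:remquotientorthogonal}, set $K = \widehat W \cap W^\perp$, so that $\widehat W = W \oplus K$, and recall from Proposition~\ref{xtens}\eqref{itm:e} that $P_{W^\perp}(\widehat L)$ is dense in $K$. Fix once and for all the constant $D := \diam\!\bigl(W/(L_\pi \cap W)\bigr)$ computed with the standard Euclidean metric on $W$; this depends only on $W$ and $L_\pi$, not on $s$.

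The heart of the argument is a uniform approximation bound: I would show that for every $x \in \widehat W$,
\begin{equation*}
\inf_{\widehat\ell \in \widehat L}\,\|x - \widehat\ell\|_s \;\le\; sD,
\end{equation*}
where $\|\cdot\|_s$ denotes the norm induced by $\langle\cdot,\cdot\rangle_s$. To prove this, decompose $x = w + k$ with $w \in W$ and $k \in K$. Given an arbitrary $\eta > 0$, density produces $\ell \in \widehat L$ with $\|P_{W^\perp}(\ell) - k\| < \eta$, so that $x - \ell$ has $K$-component of norm at most $\eta$ and $W$-component $w - P_W(\ell) \in W$. I can then further subtract an element of $L_\pi \cap W \subset \widehat L$ to bring the $W$-component inside a fundamental domain of $L_\pi \cap W$ in $W$, whose Euclidean diameter is at most $D$. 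The resulting $\widehat\ell \in \widehat L$ satisfies $\|x - \widehat\ell\|_s^2 \le s^2 D^2 + \eta^2$, and letting $\eta \searrow 0$ gives the claimed bound. Translation invariance of $\rho^s$ then upgrades this to $d(s) \le sD$, so $d(s) \to 0$ as $s \searrow 0$. Finally, $\rho^s(x,y) \le d(s)$ for all $x, y$ forces the pointwise (in fact uniform) limit $\rho^0$ to vanish identically.

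The main obstacle, and the reason $\widehat W$ rather than $W$ is the correct subspace to consider, is that $\widehat L$ does not split as an orthogonal direct sum of lattices in $W$ and in $K$: the generators $v_a$ generally have nonzero $W$-components $P_W(v_a)$, and their $W^\perp$-components $P_{W^\perp}(v_a)$ span only a dense, never a discrete, subgroup of $K$ (unless $K = 0$). Consequently one cannot reduce the $K$-direction modulo any lattice; instead, one uses density to approximate in $K$ and then relies on the fact that the accompanying $W$-error, regardless of how large it is forced to become when $\eta$ is small, is subsequently shrunk by the factor $s$ and bounded uniformly once reduced modulo $L_\pi \cap W$. This interplay between the dense projection $P_{W^\perp}(\widehat L) \subset K$ and the shrinking $W$-factor is the crux of the argument.
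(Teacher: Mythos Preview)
Your argument has a genuine gap at the definition of $D$. You assert that the $w_j$ from Proposition~\ref{xtens} span $W$, but that proposition only says they generate $L_\pi\cap W$ as a group; it is the full family $\{w_j,v_a\}$ that spans $\widehat W$. In general $L_\pi\cap W$ spans only a proper subspace of $W$: for instance, if $W$ is a line of irrational slope in $\R^2$ with $L_\pi=\Z^2$, then $L_\pi\cap W=\{0\}$. Consequently $W/(L_\pi\cap W)$ is not compact and $D=\diam\!\big(W/(L_\pi\cap W)\big)=+\infty$, so the bound $\inf_{\widehat\ell}\|x-\widehat\ell\|_s\le sD$ is vacuous. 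In fact $D<\infty$ holds precisely when $W$ is already $L_\pi$-generated, i.e., $W=\widehat W$, in which case $K=\{0\}$, $\rho^s=s\,\rho^1$, and the theorem is trivial. So your argument works only in the case where there is nothing to prove.

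The underlying obstruction is exactly the one you identify in your final paragraph but then do not overcome: after density is used to make the $K$-component small, the accompanying $W$-error $w-P_W(\ell)$ may be arbitrarily large, and there is \emph{no} lattice available inside $W$ to reduce it. The paper's proof avoids this by reversing the order of the two reductions. It first translates $\widehat w$ by an element of the full lattice $\widehat L$ into a fixed compact fundamental domain (bounding both components at once), and only then corrects the $W^\perp$-component using a \emph{finite} set $\lambda_1,\dots,\lambda_m\in\widehat L$, chosen in advance via compactness of the projected domain, whose $W^\perp$-projections $\varepsilon/4$-cover that projection. Because only finitely many correction vectors are ever used, a single constant $R=R(\varepsilon)$ bounds both the fundamental domain and all the $\lambda_i$, yielding $|\widehat w-\widehat\lambda|_s^2<(sR)^2+\varepsilon^2/4<\varepsilon^2$ for $s$ small. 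Your approach can be repaired along these lines, but not by reducing modulo $L_\pi\cap W$.
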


\begin{proof}
For each $s\in(0,1]$, we have the Euclidean norm $|\cdot|_s$ on $\widehat{W}$ defined by $|w+w'|_s^2 =s^2|w|^2 +|w'|^2$ for all $w\in W $ and $w'\in W^\perp$, where $W^\perp$ is the orthogonal complement of $W$ in $\widehat{W}$. 
Since points in $\widehat{W}/\widehat{L}$ are cosets of $\widehat{L}$ in $\widehat{W}$ and their $\rho^s$-dis\-tance is the $|\cdot|_s$-dis\-tance between the corresponding cosets, our assertion follows if we establish the existence, for any $\varepsilon>0$, of some $s_\varepsilon\in\left(0,1 \right]$ satisfying:
\begin{equation}\label{exi}
\mathrm{for\ every\ } \widehat w\in\widehat W \mathrm{\ and\ } s\in\left(0,s _\varepsilon\right], \mathrm{\ there\ exists\ }\widehat\lambda\in\widehat L \mathrm{\ with\ }|\widehat w-\widehat\lambda|_s<\varepsilon.
\end{equation}
Note that, by homogeneity, we may assume that one of the two cosets is $\widehat L$ itself.

To prove the above, identify $\widehat{W}/W$ with the orthogonal complement $W^\perp$. 
By Proposition~\ref{xtens}(e), see also Remark~\ref{thm:remquotientorthogonal}, 
$P_{W^\perp}(\widehat L)$ is a dense additive sub\-group of $W^\perp$.
Let $K\subseteq\widehat W $ be a fixed compact fundamental domain for the translational action of $\widehat L$. Density of $P_{W^\perp}(\widehat L)$ in $W^\perp$ and compactness of $P_{W^\perp}(K)$ allow us to choose an integer $m\ge1$, points $w_1,\dots,w_m\in P_{W^\perp}(K)$, and $\lambda_1,\dots,\lambda_m \in\widehat L$ such that each $P_{W^\perp}(\lambda_i)$, $i\in\{1,\dots,m\}$, lies in the open ball in $W^\perp$ centered at $P_{W^\perp}(w_i)$ of radius $\varepsilon/4$, while the union of these $m$ open balls contains $P_{W^\perp}(K)$. Let $R/2$ be the radius of an open ball in $\widehat W$ centered at $0$  containing $K\cup\{\lambda_1,\dots,\lambda_m\}$. Then \eqref{exi} holds if we define $s_\varepsilon$ by $2Rs _\varepsilon=\sqrt{3}\varepsilon$. Namely, fix $\widehat w\in\widehat W$. Since $K$ is a fun\-da\-men\-tal do\-main, we may fix $\lambda_0\in\widehat L$ such that $\widehat w-\lambda_0\in K$. Generally, whenever $w'\in K$, the open ball in $W^\perp$ centered at $P_{W^\perp}(w')$ with radius $\varepsilon/2$ contains one of the $m$ open balls radius $\varepsilon/4$ (that to which $P_{W^\perp}(w')$ belongs) and, along with it, one of $P_{W^\perp}(\lambda_i)$, $i=1,\dots,m$. Applied to $w' =\widehat w-\lambda_0$, this yields the existence of $i\in\{1,\dots,m\}$ with $| P_{W^\perp}(\widehat w-\widehat\lambda)|<\varepsilon/2$, where $\widehat\lambda=\lambda_0+\lambda_i\in\widehat L$. Our choice of $R$ makes the norms of both $\widehat w-\widehat\lambda=(\widehat w-\lambda_0)-\lambda_i$ and its $W$-component less than $R$, and so $|\widehat w-\widehat\lambda|_s^2 <(sR)^2 +\varepsilon^2/4$, while $(sR)^2 +\varepsilon^2/4\le\varepsilon^2$ when $s\in\left(\smash{0,s _\varepsilon}\right]$.
	
Finally, $\rho^0\equiv0$. Namely, Proposition~\ref{thm:closureleaves} implies that the leaves of the subspace foliation $\mathcal F_W$ are dense in the torus $\widehat{W} /\widehat{L}$. If $x,y\in\widehat{W} /\widehat{L}$ and $\varepsilon>0$, let $y'$ in the leaf through $x$ be such that $\rho^1(y,y')<\varepsilon$. Since $\rho^0(x,y')=0$, and $\rho^0\le\rho^1$, the triangle inequality for $\rho^0$ implies $\rho^0(x,y)<\varepsilon$, concluding the proof.
\end{proof}

\begin{proof}[Proof of Theorem~\ref{mainthm:A}]
For all $s>0$, denote by $\rho^s\colon M_\pi\times M_\pi\to\R$ the distance function on $M_\pi$ induced by the Riemannian metric $g_W^s$ as in \eqref{eq:gs}. Similarly, replacing $W$ by its $L_\pi$-closure $\widehat W$, one may define a Riemannian metric $g_{\widehat W}^s$, for all $s>0$; and its distance function is  denoted $\widehat\rho^s$.
Note that both $g_{W}^s$ and $g_{\widehat W}^s$ are flat metrics on $M_\pi$, that, in the limit $s=0$, degenerate into positive-semidefinite symmetric $2$-tensors. Accordingly, the limits of the distance functions $\rho^s$ and $\widehat\rho^s$ are \emph{pseudo-distances} $\rho^0$ and $\widehat\rho^0$ on $M_\pi$.
Let $\varPhi\colon M_\pi \to M_\pi/\mathcal F_{\widehat W}$ be the natural projection map, and $\delta$ be the quotient metric on the leaf space $M_\pi/\mathcal F_{\widehat W}$, see Remark~\ref{thm:remmetricstructleavessoace}.

\begin{claim}\label{claim:ineqs}
For all $x,y\in M_\pi$ and $s\in(0,1]$, we have that
\begin{equation}\label{dqr}
\delta(\varPhi(x),\varPhi(y))\le\widehat\rho^s(x,y)\le\rho^s(x,y) \le\delta(\varPhi(x),\varPhi(y))+2d(s),
\end{equation}	
where $d(s)$ is as in Theorem~\ref{unifo}. Moreover,
\begin{equation}\label{drz}
\delta(\varPhi(x),\varPhi(y))\,\le\,\widehat\rho^0(x,y)\,\le\rho^0(x,y)\,\le\,\delta(\varPhi(x),\varPhi(y))
\end{equation}
or, in other words, $\delta(\varPhi(x),\varPhi(y))=\rho^0(x,y)=\widehat\rho^0(x,y)$.
\end{claim}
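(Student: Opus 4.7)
The plan is to establish the three-term chain \eqref{dqr} first, and then to obtain \eqref{drz} by passing to the limit $s\searrow0$ and using $d(s)\to0$ from Theorem~\ref{unifo} to collapse the sandwich. The inequalities in \eqref{dqr} are of different natures: the leftmost is a soft Riemannian-submersion statement, the middle one a pointwise comparison of $2$-tensors, and the rightmost is the substantial estimate that requires an explicit near-minimizing path.

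For the leftmost inequality, I would invoke Remark~\ref{thm:remmetricstructleavessoace}, by which $\delta$ is defined so that $\widehat W^\perp\ni v\mapsto\mathcal F_{\widehat W}(v)\in M_\pi/\mathcal F_{\widehat W}$ is a local isometry, and hence $\delta$ is induced by the restriction of $g$ to the horizontal distribution $T\mathcal F_{\widehat W}^\perp$. Since $g_{\widehat W}^s$ agrees with $g$ on $T\mathcal F_{\widehat W}^\perp$ and is smaller on $T\mathcal F_{\widehat W}$, the map $\varPhi\colon(M_\pi,g_{\widehat W}^s)\to(M_\pi/\mathcal F_{\widehat W},\delta)$ remains $1$-Lipschitz, which gives $\delta(\varPhi(x),\varPhi(y))\le\widehat\rho^s(x,y)$. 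For the middle inequality, setting $W'=\widehat W\cap W^\perp$ and using the orthogonal decomposition $\R^n=W\oplus W'\oplus\widehat W^\perp$ produces the pointwise comparison
\[
g_{\widehat W}^s=s^2\,g|_W+s^2\,g|_{W'}+g|_{\widehat W^\perp}\le s^2\,g|_W+g|_{W'}+g|_{\widehat W^\perp}=g_W^s
\]
for every $s\in(0,1]$, and hence $\widehat\rho^s\le\rho^s$ by integrating along paths.

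For the rightmost, key inequality, I would work with lifts to $\R^n$. Given $x,y\in M_\pi$ and any lifts $\tilde x,\tilde y\in\R^n$, the identification of $M_\pi/\mathcal F_{\widehat W}$ with a compact flat orbifold provided by Theorem~\ref{thm:crystallographiccollapse} (applied with $\widehat W$ in place of $W$) allows me to choose $(A^*,v^*)\in\pi$ realizing
\[
\delta(\varPhi(x),\varPhi(y))=\bigl|P_{\widehat W^\perp}(\tilde x-A^*\tilde y-v^*)\bigr|.
\]
Since $\widehat L=L_\pi\cap\widehat W\subset\widehat W$ embeds into $\pi$ as translations, replacing $(A^*,v^*)$ by $(A^*,v^*+\lambda)$ for any $\lambda\in\widehat L$ preserves the $\widehat W^\perp$-component of $\tilde x-A^*\tilde y-v^*$ while shifting its $\widehat W$-component by $-\lambda$. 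Theorem~\ref{unifo} then lets me choose $\lambda$ so that the shifted $\widehat W$-component has $g_W^s$-norm at most $d(s)$. Since the splitting $\R^n=\widehat W\oplus\widehat W^\perp$ is orthogonal for $g_W^s$, combining these two bounds via Pythagoras yields
\[
\rho^s(x,y)\le\sqrt{\delta(\varPhi(x),\varPhi(y))^2+d(s)^2}\le\delta(\varPhi(x),\varPhi(y))+d(s),
\]
which is slightly stronger than the $+2d(s)$ slack appearing in \eqref{dqr}. Finally, \eqref{drz} is obtained by letting $s\searrow0$ in \eqref{dqr}, since $d(s)\to0$. The main obstacle is this lattice-correction step: its feasibility rests entirely on Theorem~\ref{unifo}, which ensures that the $g_W^s$-diameter of the torus $\widehat W/\widehat L$ shrinks to zero as $s\searrow0$, making a short intra-leaf detour sufficient to reconcile a horizontal lift with the prescribed endpoint in the degenerating metric.
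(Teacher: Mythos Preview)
Your argument is correct, and for the two leftmost inequalities it matches the paper's reasoning almost verbatim (pointwise tensor comparison for $\widehat\rho^s\le\rho^s$; the $1$-Lipschitz property of the leaf-space projection for $\delta\circ\varPhi\le\widehat\rho^s$).

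The genuine difference lies in the rightmost estimate. The paper argues in $M_\pi$: it takes a minimizing geodesic between the $\mathcal F_{\widehat W}$-leaves through $x$ and $y$, observes that this geodesic is orthogonal to $\widehat W$ and therefore has $\rho^s$-length exactly $\delta(\varPhi(x),\varPhi(y))$, and then uses the triangle inequality with \emph{two} intra-leaf detours $\rho^s(x,x')$ and $\rho^s(y',y)$, each bounded by the leaf diameter $d(s)$, yielding the slack $+2d(s)$. You instead work entirely in the universal cover: after choosing $(A^*,v^*)\in\pi$ that realizes $\delta(\varPhi(x),\varPhi(y))$ as the norm of the $\widehat W^\perp$-component of $\tilde x-A^*\tilde y-v^*$, you make a \emph{single} correction by $\lambda\in\widehat L\subset L_\pi$ (so that $(A^*,v^*+\lambda)=(\mathrm{Id},\lambda)(A^*,v^*)\in\pi$) and invoke the $g_W^s$-orthogonality of the splitting $\widehat W\oplus\widehat W^\perp$ to apply Pythagoras. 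This buys you the sharper bound $\rho^s(x,y)\le\sqrt{\delta^2+d(s)^2}\le\delta+d(s)$, which of course implies the paper's $+2d(s)$. Your route is slightly more economical and avoids discussing how leaves of $\mathcal F_{\widehat W}$ in $M_\pi$ inherit the diameter bound from the torus $\widehat W/\widehat L$; the paper's route is perhaps more geometric and makes the role of the horizontal geodesic explicit. Both rest on Theorem~\ref{unifo} in the same essential way.

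Your deduction of \eqref{drz} by letting $s\searrow0$ in \eqref{dqr} also agrees with the paper, which likewise passes to the limit using $d(s)\to0$.
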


Note that Claim~\ref{claim:ineqs} and Lemma~\ref{ghcnv} imply that the Gromov--Hausdorff limits of both $(M_\pi,\rho^s)$ and $(M_\pi,\widehat\rho^s)$ are isometric to $\big(M_\pi/\mathcal F_{\widehat W},\delta\big)$.
Thus, to finish the proof of Theorem~\ref{mainthm:A},
replace $W$ with $\widehat W$ if necessary, and
apply Theorem~\ref{thm:crystallographiccollapse}.

We are only left with proving Claim~\ref{claim:ineqs}.
First, for all $s\in[ 0,1 ]$, we clearly have $\widehat\rho^s\le\rho^s$, while $\delta(\varPhi(x),\varPhi(y))\,\le\,\widehat\rho^s(x,y)$, which implies the two leftmost inequalities of both (\ref{dqr}) and  (\ref{drz}). To see that $\delta(\varPhi(x),\varPhi(y))\,\le\,\widehat\rho^s(x,y)$, consider any piecewise $C^1$ curve in $M_\pi$, of $\widehat \rho^s $-length $\ell_s$, joining $x$ to $y$. Lifting this curve to $\R^n$, then replacing it by its orthogonal projection onto an affine sub\-space parallel to the orthogonal complement of $\widehat W$ (which is, consequently, also orthogonal to $W$) and, finally, projecting this last curve back into $M_\pi$, we obtain a new curve joining the $\mathcal F_{\widehat W}$-leaves through $x$ and $y$, 
with $\rho^s$-length and $\widehat \rho^s$-length equal to one another and not exceeding $\ell_s$. Therefore, $\delta(\varPhi(x),\varPhi(y))\,\le\,\widehat\rho^s(x,y)$, as desired.

Second, join the $\mathcal F_{\widehat W}$-leaves through $x$ and $y$ by a 
short\-est geodesic in $M_\pi$, which hence has $\rho^1$-length $\delta(\varPhi(x),\varPhi(y))$ and is orthogonal to both leaves.
Lifted to $\R^n$, this geodesic becomes a line segment orthogonal to $\widehat W $, and hence to $W$, so that 
the $\widehat \rho^s $-length and $\rho^s$-length of the geodesic are all equal to $\delta(\varPhi(x),\varPhi(y))$. For its endpoints $x' ,y' $, with $\varPhi(x')=\varPhi(x)$ and $\varPhi(y')=\varPhi(y)$, we have that 
$\rho^s(x' ,y')\le\delta(\varPhi(x),\varPhi(y))$, and the triangle inequality gives $\rho^s(x,y)\le\rho^s(x,x')+\delta(\varPhi(x),\varPhi(y))+\rho^s(y' ,y)\le\delta(\varPhi(x),\varPhi(y))+2d(s)$. 
By Theorem~\ref{unifo},
since $d(s)\to0$ as $s\searrow0$, this implies that \eqref{dqr} and \eqref{drz} hold, completing the proof of Claim~\ref{claim:ineqs}.
\end{proof}

\begin{remark}
The collapsing deformation of a flat manifold $M_\pi$ along a subspace foliation $\mathcal F_W$ as formulated in \eqref{eq:gs} coincides with the notion of collapse of flat metrics from \cite{BetPic2016,BetDerPic2017}. Namely, the latter formulation is  in terms of a deformation of the original Bieberbach group $\pi\subset\Aff(\R^n)$ through (isomorphic) Bieberbach groups $\pi_s=\mathcal A_s\cdot \pi\cdot\mathcal A_s^{-1}\subset\Aff(\R^n)$, $s\in (0,1]$, where 
$\mathcal A_s=s\, P_W+P_{W^\perp}\in\GL(n)$, and $W\subset\R^n$ is an $H_\pi$-invariant subspace. 
Since $P_W$ and $P_{W^\perp}$ commute with $H_\pi$, the holonomy and lattice associated to $\pi_s$ are respectively $H_{\pi_s}=H_\pi$ and $L_{\pi_s}=\mathcal A_s(L_\pi)$. 
Denote by $M_{\pi_s}=\R^n/\pi_s$ the corresponding flat Riemannian manifold, that is, such that the quotient map $P_{\pi_s}\colon \R^n\to M_{\pi_s}$ is a Riemannian covering. We claim that $M_{\pi_s}$ is isometric to $(M_\pi,g_W^s)$.
Indeed, the linear isomorphism $\mathcal A_s\colon\mathds R^n\to\mathds R^n$ is equivariant with respect to the actions of $\pi$ on the domain and of $\pi_s$ on the codomain, and hence descends to a diffeomorphism 
$\widetilde{\mathcal A}_s\colon M_\pi \to M_{\pi_s}$. 
For all $z\in\mathds R^n$, $\Vert\mathrm d\widetilde{\mathcal A}_s(z)\Vert^2=s^2\Vert P_W(z)\Vert^2+\Vert P_{W^\perp}(z)\Vert^2=g^s_W(z,z)$, which means that $\widetilde{\mathcal A}_s$ is an isometry between $(M_\pi,g_W^s)$ and $M_{\pi_s}$, as claimed above.
\end{remark}

\section{Singularities of the leaf space}
\label{sec:excleaves}

In this section, we analyze different types of leaves of subspace foliations, and their relation with singularities of the leaf space, leading to the proof of Theorem~\ref{mainthm:B}.
We assume throughout that $W\subset\mathds R^n$ is an $H_\pi$-invariant $L_\pi$-generated subspace.

\subsection{Principal and exceptional leaves}\label{sub:defexceptional}
Recall that the Bieberbach group of a leaf $\mathcal F_W(u)\subset M_\pi$ is isomorphic to the subgroup $G_W(u)\subset\pi$ given by \eqref{eq:defGuW}.

\begin{definition}\label{def:exceptionalleaf}
The leaf $\mathcal F_W(u)$ is \emph{exceptional} if there exists $u'\in\mathds R^n$ and $(A,v)\in G_W(u)$ such that $(A,v)\not\in G_W(u')$, i.e., if  $G_W(u)\not\subset G_W(u')$ for some $u'\in\mathds R^n$. Leaves that are not exceptional are called \emph{principal} leaves.
\end{definition}

\begin{lemma}\label{thm:charexcept}
The leaf $\mathcal F_W(u)$ is principal if and only if $A\vert_{W^\perp}=\mathrm{Id}$ and $v\in W$ for all $(A,v)\in G_W(u)$.
\end{lemma}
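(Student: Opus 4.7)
The plan is to unfold the definition of ``principal'' from Definition~\ref{def:exceptionalleaf} into the statement that $G_W(u)\subset G_W(u')$ for all $u'\in\mathds R^n$, and then translate membership in $G_W(u')$ via its explicit description \eqref{eq:defGuW}. This turns the lemma into a purely linear-algebraic statement that we can dispatch in a couple of lines.

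Concretely, I would first observe that $\mathcal F_W(u)$ is principal if and only if, for every $(A,v)\in G_W(u)$ and every $u'\in\mathds R^n$, one has $(A-\mathrm{Id})u'+v\in W$. Setting $u'=0$ gives the necessary condition $v\in W$. Granted $v\in W$, the condition at a general $u'$ collapses to $(A-\mathrm{Id})u'\in W$ for all $u'\in\mathds R^n$, i.e., $\mathrm{Im}(A-\mathrm{Id})\subset W$. Using the orthogonal decomposition $\R^n=\Ker(A-\mathrm{Id})\oplus\mathrm{Im}(A-\mathrm{Id})$ of \eqref{eq:orthimker}, this is equivalent to $W^\perp\subset\Ker(A-\mathrm{Id})$, i.e., to $A\vert_{W^\perp}=\mathrm{Id}$. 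This gives the implication from ``principal'' to the algebraic condition in the lemma.

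For the converse, assume $A\vert_{W^\perp}=\mathrm{Id}$ and $v\in W$ for every $(A,v)\in G_W(u)$, and fix an arbitrary $u'\in\mathds R^n$. Writing $u'=w+w^\perp$ with $w\in W$ and $w^\perp\in W^\perp$, we compute $(A-\mathrm{Id})u'=(A-\mathrm{Id})w$ since $w^\perp\in\Ker(A-\mathrm{Id})$; and $(A-\mathrm{Id})w\in W$ by $H_\pi$-invariance of $W$. Adding $v\in W$ shows $(A-\mathrm{Id})u'+v\in W$, so $(A,v)\in G_W(u')$, proving $G_W(u)\subset G_W(u')$ for every $u'$, i.e., $\mathcal F_W(u)$ is principal.

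No real obstacle is anticipated; the lemma is a direct bookkeeping consequence of \eqref{eq:defGuW}, \eqref{eq:orthimker} and the $H_\pi$-invariance of $W$. The only mild subtlety worth flagging explicitly is that the necessary condition $v\in W$ must be extracted first (by choosing $u'=0$), since otherwise one cannot cleanly isolate $\mathrm{Im}(A-\mathrm{Id})\subset W$ from the mixed condition $(A-\mathrm{Id})u'+v\in W$.
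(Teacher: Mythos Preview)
Your proof is correct and follows essentially the same approach as the paper: both directions unfold the definition of $G_W(u')$ from \eqref{eq:defGuW} and reduce to elementary linear algebra. The only minor difference is the order of extraction in the ``only if'' direction: you set $u'=0$ to get $v\in W$ first and then deduce $A\vert_{W^\perp}=\mathrm{Id}$ via \eqref{eq:orthimker}, whereas the paper normalizes $u\in W^\perp$ and invokes \eqref{eq:inclusionuu'} to obtain $A\vert_{W^\perp}=\mathrm{Id}$ first, then $v\in W$; your route is slightly more direct since it avoids the auxiliary implication \eqref{eq:inclusionuu'}.
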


\begin{proof}
Using \eqref{eq:defGuW}, it is readily seen that if $(A,v)\in\pi$ satisfies $A\vert_{W^\perp}=\mathrm{Id}$ and $v\in W$, then $(A,v)\in G_W(u')$ for all $u'\in\mathds R^n$. Thus, if all $(A,v)\in G_W(u)$ satisfy $A\vert_{W^\perp}=\mathrm{Id}$ and $v\in W$, then $\mathcal F_W(u)$ must be principal. Conversely, if $\mathcal F_W(u)$ is principal, assume $u\in W^\perp$ (otherwise replace $u$ with $u-P_W(u)$), and \eqref{eq:inclusionuu'} must hold for every $u'\in W^\perp$. In particular, setting $u'=0$ we get that $(A-\mathrm{Id})u=0$ for all $(A,v)\in G_W(u)$, which again implies $(A-\mathrm{Id})u'=0$ for all $u'\in W^\perp$. In this situation, it follows easily from \eqref{eq:defGuW} that $v\in W$ for all $(A,v)\in G_W(u)$.
\end{proof}

\begin{remark}\label{rem:samedef}
The above shows that Definition~\ref{def:exceptionalleaf} agrees with the usual notions for (regular) foliations; namely, a leaf $\mathcal F_W(u)$ is exceptional if and only if its leaf holonomy $\mathrm{Hol}_p(\mathcal F_W(u))$ is nontrivial, and principal if and only if $\mathrm{Hol}_p(\mathcal F_W(u))$ is trivial, see e.g.~\cite{molino,radeschi}.
From Remark~\ref{rem:holonomies}, the leaf holonomy $\mathrm{Hol}_p(\mathcal F_W(u))$ is the image of $\pi_1(\mathcal F_W(u),p)\cong G_W(u)$ in $\O\big(\nu_p(\mathcal F_W(u))\big)\cong\O(W^\perp)$. Thus, Lemma~\ref{thm:charexcept} states precisely that $\mathcal F_W(u)$ is principal if and only if $\mathrm{Hol}_p(\mathcal F_W(u))$ is trivial, see also \cite[Thm~10.1 (ii), (iv)]{derdzinski-piccione}.
\end{remark}

\begin{corollary}\label{thm:corHLnonexcept}
If $\mathcal F_W(u)$ is principal, then the map $H_W(u)\ni A\mapsto A\vert_W\in H_W(u)$ is injective, and $L_W(u)=L_\pi\cap W$.
\end{corollary}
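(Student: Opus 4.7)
The plan is to deduce both claims directly from the characterization of principal leaves in Lemma~\ref{thm:charexcept}, which says that $\mathcal F_W(u)$ is principal precisely when every $(A,v)\in G_W(u)$ satisfies $A\vert_{W^\perp}=\mathrm{Id}$ and $v\in W$. Both conclusions are essentially formal consequences of this once definitions are unpacked, so I do not foresee a real obstacle; the only subtlety is interpreting the restriction map correctly, since in the statement the source should be read as the preimage of $H_W(u)$ under $A\mapsto A\vert_W$ (i.e., the set of $A\in H_\pi$ such that $(A,v)\in G_W(u)$ for some $v$), rather than $H_W(u)\subset\O(W)$ itself.

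For the injectivity assertion, I would take two elements $A_1,A_2\in H_\pi$ coming from pairs $(A_i,v_i)\in G_W(u)$, and assume $A_1\vert_W=A_2\vert_W$. Since the principal hypothesis forces $A_i\vert_{W^\perp}=\mathrm{Id}$ for each $i$, and $A_i\in\O(n)$ respects the orthogonal decomposition $\R^n=W\oplus W^\perp$, each $A_i$ is fully determined by its restriction to $W$ together with the identity on $W^\perp$. Thus $A_1=A_2$, so the restriction map $A\mapsto A\vert_W$ is injective onto $H_W(u)$.

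For the identification $L_W(u)=L_\pi\cap W$, one inclusion was already observed in Section~\ref{sub:geomleaves}: every $v\in L_\pi\cap W$ gives $(\mathrm{Id},v)\in G_W(u)$ and hence contributes $v$ to $L_W(u)$. For the reverse inclusion, I would invoke Corollary~\ref{thm:bieberleaves} to write a general element of $L_W(u)$ as $w=(A-\mathrm{Id})u+v$ with $(A,v)\in G_W(u)$ and $A\vert_W=\mathrm{Id}_W$. The principal assumption also gives $A\vert_{W^\perp}=\mathrm{Id}$, so $A=\mathrm{Id}$ and $w=v$. Then $(\mathrm{Id},v)\in\pi$ forces $v\in L_\pi$, while $v\in W$ holds by Lemma~\ref{thm:charexcept}, so $w\in L_\pi\cap W$, as desired.
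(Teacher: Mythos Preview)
Your proof is correct and is exactly the argument the paper intends: the corollary is stated without proof immediately after Lemma~\ref{thm:charexcept}, and both assertions follow at once from that lemma's characterization $A\vert_{W^\perp}=\mathrm{Id}$, $v\in W$, just as you unpack them. Your observation about the intended domain of the restriction map (the set of $A\in H_\pi$ arising from some $(A,v)\in G_W(u)$, rather than $H_W(u)\subset\O(W)$ itself) is also well taken.
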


The general result in foliation theory that the \emph{closest-point projection} is a covering map can be easily obtained in the context of subspace foliations as follows:

\begin{proposition}\label{thm:leafcovering}
Given $u,u'\in W^\perp$, such that $\mathcal F_W(u)$ is a principal leaf, the translation $T_{u'-u}\colon\ W+u\to W+u'$ induces a covering map $\mathcal F_W(u)\to\mathcal F_W(u')$.
\end{proposition}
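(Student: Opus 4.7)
My plan is to reduce the problem to the standard fact that an inclusion of Bieberbach groups of the same Euclidean space induces a finite covering between the corresponding flat manifolds. The principality of $\mathcal F_W(u)$ is what makes the passage from the translation $T_{u'-u}$ to a well-defined quotient map work.

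\textbf{Step 1: $G_W(u) \subseteq G_W(u')$.} Fix $(A,v)\in G_W(u)$. Since $\mathcal F_W(u)$ is principal, Lemma~\ref{thm:charexcept} gives $A\vert_{W^\perp}=\mathrm{Id}$ and $v\in W$. Because $u'\in W^\perp$, this yields $(A-\mathrm{Id})u'=0$, so $(A-\mathrm{Id})u'+v=v\in W$, and therefore $(A,v)\in G_W(u')$ by~\eqref{eq:defGuW}.

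\textbf{Step 2: Equivariance of $T_{u'-u}$.} I want to show that for every $(A,v)\in G_W(u)$, the diagram
\begin{equation*}
\begin{CD}
W+u @>(A,v)>> W+u \\
@V T_{u'-u} VV @VV T_{u'-u} V \\
W+u' @>>(A,v)> W+u'
\end{CD}
\end{equation*}
commutes. For $w\in W$, the upper-right path gives $Aw+Au+v+(u'-u)=Aw+u+v+(u'-u)$, using $(A-\mathrm{Id})u=0$ since $u\in W^\perp$ and $A\vert_{W^\perp}=\mathrm{Id}$. The lower-left path gives $Aw+Au'+v=Aw+u'+v$, again using $(A-\mathrm{Id})u'=0$. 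Both equal $Aw+v+u'$, so $T_{u'-u}$ is $G_W(u)$-equivariant, where on the target $G_W(u)$ acts via its inclusion into $G_W(u')$.

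\textbf{Step 3: Descent and covering.} By Proposition~\ref{thm:Bieberbachleaf} and Corollary~\ref{thm:bieberleaves}, $\mathcal F_W(u)\cong(W+u)/G_W(u)$ and $\mathcal F_W(u')\cong(W+u')/G_W(u')$. The equivariant isometry $T_{u'-u}$ therefore descends to a well-defined map $f\colon\mathcal F_W(u)\to\mathcal F_W(u')$, which factors as
\begin{equation*}
(W+u)/G_W(u)\;\xrightarrow{\;\overline{T_{u'-u}}\;}\;(W+u')/G_W(u)\;\xrightarrow{\;q\;}\;(W+u')/G_W(u'),
\end{equation*}
where $\overline{T_{u'-u}}$ is a bijective isometry and $q$ is the quotient by the action of $G_W(u')$ extending that of $G_W(u)$. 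Since $G_W(u)$ and $G_W(u')$ (identified with their images in $\Iso(W)$ via the restriction isomorphism of Proposition~\ref{thm:Bieberbachleaf}) are both Bieberbach groups acting cocompactly on $W$, the subgroup $G_W(u)\subseteq G_W(u')$ has finite index. Therefore $q$ is a finite covering map, and so is $f$.

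\textbf{Main obstacle.} The only delicate point is the equivariance in Step~2, which hinges on the vanishing of $(A-\mathrm{Id})$ on both $u$ and $u'$; both vanishings rely on $u,u'\in W^\perp$ combined with the principality hypothesis $A\vert_{W^\perp}=\mathrm{Id}$. Without principality, the translation $T_{u'-u}$ would fail to intertwine the group actions, and the map between leaves would not be well defined.
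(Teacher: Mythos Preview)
Your proof is correct and follows essentially the same approach as the paper: establish the inclusion $G_W(u)\subset G_W(u')$, verify that $T_{u'-u}$ intertwines the $G_W(u)$-actions (using $A\vert_{W^\perp}=\mathrm{Id}$ from Lemma~\ref{thm:charexcept}), and conclude that the induced map on quotients is a covering. The paper's argument is terser---it invokes the inclusion $G_W(u)\subset G_W(u')$ directly from Definition~\ref{def:exceptionalleaf} rather than rederiving it via Lemma~\ref{thm:charexcept}, and it omits your explicit Step~3 factorization through $(W+u')/G_W(u)$---but the substance is identical.
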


\begin{proof}
The projections $P_\pi\colon W+u\to\mathcal F_W(u)$ and $P_\pi\colon W+u'\to\mathcal F_W(u')$ are covering maps, with deck transformation groups $G_W(u)$ and $G_W(u')$ respectively, see Proposition~\ref{thm:Bieberbachleaf}. Since $\mathcal F_W(u)$ is principal, $G_W(u)\subset G_W(u')$. In order to conclude, it suffices to note that for all $(A,v)\in G_W(u)$, one has $T_{u'-u}\big((A,v)x\big)=(A,v)\big(T_{u'-u}(x)\big)$ for all $x\in W+u$. This follows immediately from $A\vert_{W^\perp}=\mathrm{Id}$, see Lemma~\ref{thm:charexcept}.
\end{proof}

\begin{remark}\label{rem:isotopy}
It follows from the proof of Proposition~\ref{thm:leafcovering} that the homomorphism between fundamental groups $G_W(u)\to G_W(u')$ induced by the above covering map $\mathcal F_W(u)\to\mathcal F_W(u')$ is the inclusion.
\end{remark}

Moreover, in the realm of subspace foliations, the proof that exceptional leaves constitute a meager set is also relatively simple. Given $A\in H_\pi$, recall that the restriction $(A-\mathrm{Id})\vert_{\Ker(A-\mathrm{Id})^\perp}$ is an isomorphism, since
$\Ker(A-\mathrm{Id})^\perp=\mathrm{Im}(A-\mathrm{Id})$ by \eqref{eq:orthimker}. We denote its inverse by \[\mathcal S_A\colon\Ker(A-\mathrm{Id})^\perp\longrightarrow\Ker(A-\mathrm{Id})^\perp.\]
Define $\pi_W^\text{sing}$ to be the following subset of the Bieberbach group $\pi$:
\begin{equation}\label{eq:defpising}
\pi_W^\text{sing}=\big\{(A,v)\in\pi:A\vert_{W^\perp}\ne\mathrm{Id},\ \text{and}\ P_{W^\perp}(v)\in\Ker(A-\mathrm{Id})^\perp\big\}.
\end{equation}
It is interesting to observe that for all $u\in\mathds R^n$, if $(A,v)\in G_W(u)$ and $A\vert_{W^\perp}\ne\mathrm{Id}$,
 then $(A,v)\in\pi_W^\text{sing}$; namely:
\begin{equation*}
\begin{aligned}
(A,v)\in G_W(u) \;  &\stackrel{\eqref{eq:defGuW}}{\Longrightarrow}  \; (A-\mathrm{Id})u+v\in W\\ 
\; & \Longrightarrow \; P_{W^\perp}(v)=-P_{W^\perp}\big((A-\mathrm{Id})u\big)=-(A-\mathrm{Id})\big(P_{W^\perp}(u)\big).
\end{aligned}
\end{equation*}
Thus, we have a well-defined map:
\begin{equation*}
\pi_W^\text{sing}\ni (A,v)\longmapsto u_{(A,v)}:=\mathcal S_A\big(P_{W^\perp}(v)\big)\in\Ker(A-\mathrm{Id})^\perp.
\end{equation*}
Note that $u_{(A,v)}\in W^\perp$ for all $(A,v)\in\pi_W^\text{sing}$, since $W^\perp$ is preserved by $A-\mathrm{Id}$.

\begin{proposition}\label{thm:existenceexcept}
The set $\mathcal E_W=\big\{u\in\mathds R^n:\mathcal F_W(u)\ \text{is exceptional}\big\}$ is the union of a countable family of proper affine subspaces of $\mathds R^n$, more precisely
\begin{equation*}
\mathcal E_W=\bigcup_{(A,v)\in\pi_W^\text{sing}}\Big((A-\mathrm{Id})^{-1}(W)-u_{(A,v)}\Big).
\end{equation*}
\end{proposition}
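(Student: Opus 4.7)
The plan is to establish the claimed equality by unpacking both inclusions using the explicit description of $G_W(u)$ in \eqref{eq:defGuW} and the characterization of principal leaves in Lemma~\ref{thm:charexcept}. Recall that Lemma~\ref{thm:charexcept} says $\mathcal F_W(u)$ is principal iff every $(A,v)\in G_W(u)$ satisfies both $A|_{W^\perp}=\mathrm{Id}$ and $v\in W$. My first step will be to argue that the second condition is redundant: indeed, if $(A,v)\in G_W(u)$ and $A|_{W^\perp}=\mathrm{Id}$, then since $A$ preserves both $W$ and $W^\perp$, decomposing $u=P_W(u)+P_{W^\perp}(u)$ gives $(A-\mathrm{Id})u=(A-\mathrm{Id})P_W(u)\in W$, from which $v=[(A-\mathrm{Id})u+v]-(A-\mathrm{Id})u\in W$. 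Hence $\mathcal F_W(u)$ is exceptional iff there exists $(A,v)\in G_W(u)$ with $A|_{W^\perp}\ne\mathrm{Id}$.

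The second step is to fix such an $(A,v)\in\pi$ with $A|_{W^\perp}\ne\mathrm{Id}$, and determine the set of $u\in\R^n$ satisfying $(A,v)\in G_W(u)$. Projecting the defining condition $(A-\mathrm{Id})u+v\in W$ onto $W^\perp$ and using $A$-invariance of $W^\perp$, this is equivalent to
\begin{equation*}
(A-\mathrm{Id})P_{W^\perp}(u)=-P_{W^\perp}(v).
\end{equation*}
Since $\Ker(A-\mathrm{Id})^\perp=\mathrm{Im}(A-\mathrm{Id})$ by \eqref{eq:orthimker}, this equation has a solution iff $P_{W^\perp}(v)\in\Ker(A-\mathrm{Id})^\perp$, i.e., iff $(A,v)\in\pi_W^{\text{sing}}$; and when it does, the solution set for $P_{W^\perp}(u)$ is the affine subspace $-u_{(A,v)}+\bigl(\Ker(A-\mathrm{Id})\cap W^\perp\bigr)$ of $W^\perp$. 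Since $P_W(u)$ is unconstrained, and a direct computation (decomposing along $W\oplus W^\perp$) shows that $(A-\mathrm{Id})^{-1}(W)=W\oplus\bigl(\Ker(A-\mathrm{Id})\cap W^\perp\bigr)$, the full set of solutions is exactly $(A-\mathrm{Id})^{-1}(W)-u_{(A,v)}$.

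Combining these two steps gives the asserted equality. Finally, to justify the qualitative claim, I note that $\pi$ is countable (as a discrete subgroup of $\Iso(\R^n)$), hence so is $\pi_W^{\text{sing}}$; and for each $(A,v)\in\pi_W^{\text{sing}}$, the condition $A|_{W^\perp}\ne\mathrm{Id}$ forces $\Ker(A-\mathrm{Id})\cap W^\perp$ to be a proper subspace of $W^\perp$, so that $(A-\mathrm{Id})^{-1}(W)-u_{(A,v)}$ is a proper affine subspace of $\R^n$. There is no substantive obstacle here; the only point demanding care is the reduction in the first step (checking that the apparently weaker condition $A|_{W^\perp}\ne\mathrm{Id}$ captures \emph{all} exceptional behavior), together with keeping track of which component of the equation lives in $W$ versus $W^\perp$ when inverting $A-\mathrm{Id}$.
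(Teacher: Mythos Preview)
Your proof is correct and follows essentially the same route as the paper, with one organizational difference worth noting: you begin by invoking Lemma~\ref{thm:charexcept} and immediately simplify it to the criterion ``$\mathcal F_W(u)$ is exceptional iff some $(A,v)\in G_W(u)$ has $A|_{W^\perp}\ne\mathrm{Id}$,'' and then for each such $(A,v)$ you solve the linear equation $(A-\mathrm{Id})P_{W^\perp}(u)=-P_{W^\perp}(v)$ to identify the corresponding affine slab. The paper instead works directly from Definition~\ref{def:exceptionalleaf}, verifying both inclusions by explicit computation (in particular, for the forward inclusion it constructs a witness $u'$ with $(A,v)\notin G_W(u+u')$, and for the reverse it derives $A|_{W^\perp}\ne\mathrm{Id}$ and $P_{W^\perp}(v)\in\Ker(A-\mathrm{Id})^\perp$ from the existence of such a $u'$). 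Your reduction via Lemma~\ref{thm:charexcept} packages these same computations more cleanly and makes transparent why the $v\in W$ clause in that lemma is redundant; the paper's argument is slightly more self-contained in that it does not appeal to Lemma~\ref{thm:charexcept}. The identification $(A-\mathrm{Id})^{-1}(W)=W\oplus(\Ker(A-\mathrm{Id})\cap W^\perp)$ you use is correct and matches the paper's Remark~\ref{thm:rempropaffsubsp}.
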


\begin{remark}\label{thm:rempropaffsubsp}
Note that if $(A,v)\in \pi_W^\text{sing}$, then $\mathrm{Im}(A-\mathrm{Id})\cap W^\perp\ne\{0\}$, because $W^\perp$ is $A$-invariant, and $A\vert_{W^\perp}\ne\mathrm{Id}$. In particular, $\mathrm{Im}(A-\mathrm{Id})\not\subset W$, which says that the inverse image $(A-\mathrm{Id})^{-1}(W)$ is a \emph{proper} subspace of $\mathds R^n$ for all $(A,v)\in \pi_W^\text{sing}$.
\end{remark}

\begin{proof}[Proof of Proposition~\ref{thm:existenceexcept}]
Assume that $u\in(A-\mathrm{Id})^{-1}(W)-u_{(A,v)}$ for some $(A,v)\in\pi_W^\text{sing}$, i.e., $(A-\mathrm{Id})(u+u_{(A,v)})\in W$. Then:
\begin{multline*}
(A-\mathrm{Id})u+v=(A-\mathrm{Id})(u+u_{(A,v)})-(A-\mathrm{Id})u_{(A,v)}+v\\=(A-\mathrm{Id})(u+u_{(A,v)})-P_{W^\perp}(v)+v\in W,
\end{multline*}
i.e., $(A,v)\in G_W(u)$. Moreover, since $(A,v)\in\pi_W^\text{sing}$, then $A\vert_{W^\perp}\ne\mathrm{Id}$, and hence there exists $u'\in(A-\mathrm{Id})^{-1}(W^\perp\setminus\{0\})$. A direct computation shows that
\[(A-\mathrm{Id})(u+u')+v=(A-\mathrm{Id})u'+(A-\mathrm{Id})u+v=(A-\mathrm{Id})u'+P_W(v)\not\in W,\]
i.e., $(A,v)\not\in G_W(u+u')$. Therefore, $\mathcal F_W(u)$ is exceptional.

Conversely, assume $\mathcal F_W(u)$ is exceptional, and let $(A,v)\in\pi$, $u'\in\mathds R^n$ with
\begin{equation*}
(A-\mathrm{Id})u+v\in W,\quad\text{and}\quad (A-\mathrm{Id})u'+v\not\in W.
\end{equation*}
By the above, we get $P_{W^\perp}(v)=-P_{W^\perp}(A-\mathrm{Id})u$, and
\[0\ne P_{W^\perp}(A-\mathrm{Id})u'-P_{W^\perp}(v)=P_{W^\perp}(A-\mathrm{Id})(u'+u)=(A-\mathrm{Id})P_{W^\perp}(u'+u),\]
which implies that $A\vert_{W^\perp}\ne\mathrm{Id}$. Moreover:
\[P_{\Ker(A-\mathrm{Id})}\big(P_{W^\perp}(v)\big)=-P_{\Ker(A-\mathrm{Id})}\big((A-\mathrm{Id})P_{W^\perp}(u)\big)\stackrel{ \eqref{eq:orthimker}}=0,\]
i.e., $P_{W^\perp}(v)\in\Ker(A-\mathrm{Id})^\perp$, and so $(A,v)\in\pi_W^\text{sing}$. Moreover, we have that
\[P_{W^\perp}(A-\mathrm{Id})\big(u+u_{(A,v)}\big)=-P_{W^\perp}(v)+(A-\mathrm{Id})\mathcal S_A\big(P_{W^\perp}(v)\big)=0,\]
i.e., $u\in\mathcal E_W$, which concludes the proof.
\end{proof}

\subsection{Characterizing singularities}
We now describe the singularities of the leaf space $M_\pi/\mathcal F_W$, relating them with exceptional leaves of $\mathcal F_W$. Once again, although these results hold in far greater generality for totally geodesic Riemannian foliations, we provide simple and explicit proofs in the context of subspace foliations, see also \cite[Thm~10.1 (iii)]{derdzinski-piccione}.

\begin{lemma}\label{thm:nonexceptisometric}
Any two principal leaves are isometric. More generally, if $G_W(u)=G_W(u')$, then $\mathcal F_W(u)$ and $\mathcal F_W(u')$ are isometric.
\end{lemma}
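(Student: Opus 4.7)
The plan is to realize each leaf intrinsically as a flat-manifold quotient $W/\pi_W(u)$ (via Corollary~\ref{thm:bieberleaves}) and then produce an explicit translation of $W$ that conjugates $\pi_W(u)$ onto $\pi_W(u')$. The first (principal) statement will drop out as a special case once we show the general $G_W(u)=G_W(u')$ case, because Lemma~\ref{thm:charexcept} (and the discussion following Definition~\ref{def:exceptionalleaf}) guarantees that any two principal leaves share the same subgroup $G_W(u)=\{(A,v)\in\pi:A|_{W^\perp}=\mathrm{Id},\,v\in W\}$.

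So suppose $G_W(u)=G_W(u')$. The group $\pi_W(u)$ consists of isometries $(A|_W,(A-\mathrm{Id})u+v)$ with $(A,v)$ ranging over the common subgroup; $\pi_W(u')$ differs only in that $u$ is replaced by $u'$. My first step is to extract the decisive structural fact: for every $(A,v)$ in this common subgroup, both $(A-\mathrm{Id})u+v$ and $(A-\mathrm{Id})u'+v$ lie in $W$, so their difference $(A-\mathrm{Id})(u-u')$ lies in $W$. Decomposing $u-u'=P_W(u-u')+P_{W^\perp}(u-u')$, noting that $W$ and $W^\perp$ are $A$-invariant, the $W^\perp$-component of this difference equals $(A-\mathrm{Id})P_{W^\perp}(u-u')\in W^\perp$ and must therefore vanish. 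Consequently $P_{W^\perp}(u-u')\in\Ker(A-\mathrm{Id})$ and
\begin{equation*}
(A-\mathrm{Id})(u-u')=(A|_W-\mathrm{Id}_W)P_W(u-u').
\end{equation*}

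Next I set $w_0:=P_W(u-u')\in W$ and consider the translation $T_{w_0}\colon W\to W$, $T_{w_0}(w)=w+w_0$, which is an isometry of $W$. A direct computation shows that conjugation by $T_{w_0}$ sends $(A|_W,b)\in\Iso(W)$ to $(A|_W,b+(\mathrm{Id}_W-A|_W)w_0)$. Plugging in $b=(A-\mathrm{Id})u+v$ and using the displayed identity, the new translation part becomes
\begin{equation*}
(A-\mathrm{Id})u+v-(A-\mathrm{Id})(u-u')=(A-\mathrm{Id})u'+v,
\end{equation*}
so $T_{w_0}\,\pi_W(u)\,T_{w_0}^{-1}=\pi_W(u')$. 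It follows that $T_{w_0}$ descends to an isometry $W/\pi_W(u)\to W/\pi_W(u')$, and by Corollary~\ref{thm:bieberleaves} this is the desired isometry $\mathcal F_W(u)\to\mathcal F_W(u')$. For the initial assertion about principal leaves, I will simply note that Lemma~\ref{thm:charexcept} shows $G_W(u)$ is independent of $u$ among principal leaves, so the general case applies.

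The only step that could plausibly trip me up is the verification that $P_{W^\perp}(u-u')$ must be $A$-fixed for all relevant $(A,v)$; this is precisely what makes the single translation $T_{w_0}$ work simultaneously for every element of the group, and it uses in an essential way both the hypothesis $G_W(u)=G_W(u')$ and the $A$-invariance of $W^\perp$. Once this is in hand, the rest is routine affine algebra.
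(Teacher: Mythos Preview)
Your proof is correct and follows essentially the same approach as the paper's: both hinge on the observation that $G_W(u)=G_W(u')$ forces $(A-\mathrm{Id})P_{W^\perp}(u-u')=0$, which makes a single translation conjugate one Bieberbach group onto the other. The only cosmetic difference is that the paper first reduces to $u,u'\in W^\perp$ (so that $(A-\mathrm{Id})(u-u')=0$ outright, via \eqref{eq:inclusionuu'}) and then uses the translation by $u'-u$ between the affine subspaces $W+u$ and $W+u'$, whereas you skip that reduction and instead work intrinsically on $W$ via Corollary~\ref{thm:bieberleaves}, translating by the $W$-component $w_0=P_W(u-u')$.
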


\begin{proof}
Assume $u,u'\in W^\perp$, and $G_W(u)=G_W(u')$ By \eqref{eq:inclusionuu'}, $(\mathrm{Id}-A)u=(\mathrm{Id}-A)u'$, i.e., $A(u-u')=u-u'$, for all $(A,v)\in G_W(u)=G_W(u')$. This means that the isometry $(\mathrm{Id},u'-u)\colon W+u\to W+u'$ is equivariant with respect to the actions of $G_W(u)=G_W(u')$ on $W+u$ and on $W+u'$. Thus, $(\mathrm{Id},u'-u)$ induces an isometry from $\mathcal F_W(u)$ to $\mathcal F_W(u')$.
\end{proof}

A partial converse to the above statement is given as follows:

\begin{proposition}\label{thm:allisometric}
The subspace foliation $\mathcal F_W$ has no exceptional leaves if and only if all of its leaves are isometric.
\end{proposition}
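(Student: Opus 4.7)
The forward implication is already essentially recorded in Lemma~\ref{thm:nonexceptisometric}. Indeed, if $\mathcal F_W$ has no exceptional leaves, then by definition every leaf $\mathcal F_W(u)$ satisfies $G_W(u)\subset G_W(u')$ for all $u'\in\mathds R^n$; applying this to both $u$ and $u'$ yields $G_W(u)=G_W(u')$, so the ``more general'' half of Lemma~\ref{thm:nonexceptisometric} produces an isometry $\mathcal F_W(u)\cong\mathcal F_W(u')$ for every pair of leaves.

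For the converse, I will argue by contrapositive: assume some leaf $\mathcal F_W(u')$ is exceptional, and exhibit a leaf not isometric to it. Since $\mathcal F_W(u)$ depends only on $P_{W^\perp}(u)$, and the same holds for the subgroup $G_W(u)\subset\pi$ (because $A$ preserves $W$, so $(A-\mathrm{Id})P_W(u)\in W$), there is no loss of generality in assuming $u'\in W^\perp$. By Proposition~\ref{thm:existenceexcept}, the set $\mathcal E_W$ of exceptional points is a countable union of proper affine subspaces of $\mathds R^n$ (cf.\ Remark~\ref{thm:rempropaffsubsp}); in particular its complement meets $W^\perp$, so we may pick $u\in W^\perp$ whose leaf $\mathcal F_W(u)$ is principal.

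The key step is now to compare $G_W(u)$ and $G_W(u')$. Since $\mathcal F_W(u)$ is principal, $G_W(u)\subset G_W(u')$. Because $\mathcal F_W(u')$ is exceptional, there exist $\tilde u\in\mathds R^n$ and $(A,v)\in G_W(u')$ with $(A,v)\notin G_W(\tilde u)$; but $\mathcal F_W(u)$ being principal forces $G_W(u)\subset G_W(\tilde u)$, so $(A,v)\notin G_W(u)$. Hence the inclusion $G_W(u)\subsetneq G_W(u')$ is strict. Now invoke Proposition~\ref{thm:leafcovering}: the translation $T_{u'-u}$ descends to a covering map $\mathcal F_W(u)\to\mathcal F_W(u')$, whose induced homomorphism on fundamental groups is precisely the inclusion $G_W(u)\hookrightarrow G_W(u')$ (Remark~\ref{rem:isotopy}). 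Thus this covering has degree $[G_W(u'):G_W(u)]\ge 2$, giving
\[
\vol(\mathcal F_W(u))\,=\,[G_W(u'):G_W(u)]\cdot\vol(\mathcal F_W(u'))\,>\,\vol(\mathcal F_W(u')),
\]
so $\mathcal F_W(u)$ and $\mathcal F_W(u')$ cannot be isometric.

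The only real obstacle is the bookkeeping step of showing strict inclusion $G_W(u)\subsetneq G_W(u')$; everything else is a direct application of the previously established lemmas. Once the strict inclusion is in hand, the volume comparison through the finite-sheeted covering closes the argument without further work.
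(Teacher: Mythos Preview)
Your argument is correct and follows essentially the same route as the paper's proof: both invoke Lemma~\ref{thm:nonexceptisometric} for the forward direction, and for the converse both pick a principal leaf via Proposition~\ref{thm:existenceexcept}, use the covering of Proposition~\ref{thm:leafcovering} together with Remark~\ref{rem:isotopy}, and compare volumes. The only cosmetic difference is that the paper argues by contradiction (assuming the leaves are isometric forces the covering to have degree one, whence $G_W(u)=G_W(u')$, contradicting exceptionality), whereas you argue the contrapositive by first establishing the strict inclusion $G_W(u)\subsetneq G_W(u')$ and then reading off the strict volume inequality.
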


\begin{proof}
By Lemma~\ref{thm:nonexceptisometric}, if $\mathcal F_W$ has no exceptional leaves, then all the leaves are isometric. Conversely, assume that $\mathcal F_W(u)$ is exceptional for some $u\in\mathds R^n$, and choose $u'\in\mathds R^n$ such that $\mathcal F_W(u')$ is principal, which is possible by Proposition~\ref{thm:existenceexcept}. 
Since $\mathcal F_W(u)$ and $\mathcal F_W(u')$ are isometric, they have the same volume. Thus, the covering map from Proposition~\ref{thm:leafcovering} is a diffeomorphism, and hence induces an isomorphism $G_W(u)\to G_W(u')$ between fundamental groups. By Remark~\ref{rem:isotopy}, this isomorphism is the inclusion $G_W(u)\subset G_W(u')$, which implies that $G_W(u)=G_W(u')$, yielding the desired contradiction.
\end{proof}

We are now in position to prove Theorem~\ref{mainthm:B} stated in the Introduction.

\begin{proof}[Proof of Theorem~\ref{mainthm:B}]
The equivalence between (ii) and (iii) is proven in Proposition~\ref{thm:allisometric}.
From Theorem~\ref{thm:crystallographiccollapse}, the leaf space $M_\pi/\mathcal F_W$ is isometric to the flat orbifold $W^\perp/\pi^\perp$.
In order to show that (i) and (iii) are equivalent, we first claim that a point in $W^\perp/\pi^\perp=M_\pi/\mathcal F_W$ is singular if and only if the corresponding leaf is exceptional. 
By definition, the singularities of $W^\perp/\pi^\perp$ correspond to orbits of the $\pi^\perp$-action on $W^\perp$ with nontrivial stabilizer.
Fix $u\in\mathcal E_W$, and choose $(A,v)\in\pi_W^\text{sing}$ such that $(A-\mathrm{Id})(u+u_{(A,v)})\in W$, see Proposition~\ref{thm:existenceexcept}. Let $x=-u_{(A,v)}\in W^\perp\cap\Ker(A-\mathrm{Id})^\perp$, so that $Ax=x-P_{W^\perp}(v)$, and hence the (nontrivial) element $\big(A\vert_{W^\perp},P_{W^\perp}(v)\big)\in\pi^\perp$ is in the stabilizer of $x$. Conversely, if $(A,v)\in\pi$, $x\in W^\perp$ are such that $\big(A\vert_{W^\perp},P_{W^\perp}(v)\big)\in\pi^\perp$ is nontrivial, $Ax+P_{W^\perp}(v)=x$, i.e., $P_{W^\perp}(v)=-(A-\mathrm{Id})x$, then clearly $P_{W^\perp}(v)\in\Ker(A-\mathrm{Id})^\perp=\mathrm{Im}(A-\mathrm{Id})$. Moreover, $A\vert_{W^\perp}\ne\mathrm{Id}$, for otherwise $P_{W^\perp}(v)=0$, contrary to the assumption that $\big(A\vert_{W^\perp},P_{W^\perp}(v)\big)$ is a nontrivial element in $\pi^\perp$. 
Therefore, $\pi_W^\text{sing}\neq\emptyset$ by \eqref{eq:defpising}, and hence $\mathcal E_W\neq\emptyset$ by Proposition~\ref{thm:existenceexcept}.
This proves the above claim, i.e., $M_\pi/\mathcal F_W$ is smooth if and only if $\mathcal F_W$ has no exceptional leaves. When this is the case, by Proposition~\ref{thm:leafcovering}, the map $M_\pi\to M_\pi/\mathcal F_W$ is a fiber bundle whose fibers are the leaves $\mathcal F_W(u)$ for any $u\in\R^n$, hence (i) and (iii) are equivalent. Finally, the equivalence between (iii) and (iv) follows from Proposition~\ref{thm:existenceexcept}, since $\mathcal F_W$ has no exceptional leaves if and only if $\pi_W^\text{sing}=\emptyset$, which is equivalent to (iv) by \eqref{eq:defpising}.
\end{proof}

\section{Existence of at least two nontrivial collapses}\label{sec:existence}

Whenever needed, we implicitly identify the rational vector space $L_\pi\tens{\mathds Z}\mathds Q$ with the $\mathds Q$-subspace of $\mathds R^n$ spanned by $L_\pi$. 
By Maschke's Theorem (see also Proposition~\ref{thm:ExistcomplHinvLgen}), the rational holonomy representation is completely reducible, so there is a decomposition of the rational vector space $L_\pi\tens{\mathds Z}\mathds Q$ of the form
\begin{equation}\label{eq:lpiq} 
L_\pi\tens{\mathds Z}\mathds Q= V_{1}^{(1)} \oplus \cdots\oplus V_{a_1}^{(1)} \oplus\ldots\oplus V_{1}^{(k)}\oplus\cdots\oplus V_{a_k}^{(k)},
\end{equation}
where the $V_j^{i}$ are pairwise distinct $\mathds Q$-irreducible $H_\pi$-invariant subspaces, with $V_j^{(i)}$ equivalent to $V_{j'}^{(i')}$ if and only if $i=i'$. Thus, the integers $a_i$ represent the multiplicity of each irreducible component, and $\widetilde V_i:=V_1^{(i)}\oplus\cdots\oplus V_{a_i}^{(i)}$ are the isotypic components of the rational holonomy representation. By Theorem~\ref{thm:lutowski}, we have that $k\geq 2$. Set $d_j=\dim(V_1^{(j)})$, for $j=1,\ldots,k$.
If the $\widetilde V_j$'s are arranged with dimensions in nondecreasing order, i.e., $d_j\le d_{j+1}$ for all $1\leq j\leq k-1$, then the i-sequence of the rational holonomy representation of $H_\pi$ is given by:
\begin{equation*}
i_\pi=\big(\!\underbrace{d_1,\ldots,d_1}_{a_1 \text{ times}},\cdots,\underbrace{d_k,\ldots,d_k}_{a_k \text{ times}}\!\big)
\end{equation*}
Let us now show that the i-sequence of the rational holonomy representation of a flat orbifold obtained by collapsing a flat manifold $M_\pi$ is a \emph{subsequence} of the i-sequence of the rational holonomy representation of $M_\pi$.

\begin{lemma}\label{thm:collapseafew}
Consider the decomposition \eqref{eq:lpiq}, and fix integers $0\leq b_j\leq a_j$,  $j=1,\ldots,k$. Let $W$
be the $H_\pi$-invariant and $L_\pi$-generated  subspace given by the real span of the rational vector subspace $V_{1}^{(1)} \oplus \cdots V_{b_1}^{(1)} \oplus\ldots\oplus V_{1}^{(k)}\oplus\cdots\oplus V_{b_k}^{(k)}$. The rational holonomy representation of the flat orbifold $M_\pi/\mathcal F_W=W^\perp/\pi^\perp$ has i-sequence given by:
\begin{equation*}
i_{\pi^\perp}=\big(\!\!\!\underbrace{d_1,\ldots,d_1}_{(a_1-b_1) \text{ times}} \!\!,\cdots,\!\underbrace{d_k,\ldots,d_k}_{(a_k-b_k) \text{ times}}\!\!\!\big).
\end{equation*}
\end{lemma}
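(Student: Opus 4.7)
The plan is to identify the rational holonomy representation of $\pi^\perp$ with an explicit complementary $H_\pi$-submodule of $L_\pi\tens{\mathds Z}\mathds Q$, and then read off its i-sequence directly. Throughout, I set $V=V_1^{(1)}\oplus\cdots\oplus V_{b_1}^{(1)}\oplus\cdots\oplus V_1^{(k)}\oplus\cdots\oplus V_{b_k}^{(k)}$ (so that $W=V\tens{\mathds Q}\mathds R$), and let $V'=V_{b_1+1}^{(1)}\oplus\cdots\oplus V_{a_1}^{(1)}\oplus\cdots\oplus V_{b_k+1}^{(k)}\oplus\cdots\oplus V_{a_k}^{(k)}$ denote the natural $H_\pi$-invariant complement of $V$ in $L_\pi\tens{\mathds Z}\mathds Q$.

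First, I would apply Corollary~\ref{thm:holattcollapse} to see that $H^\perp$ is the image of the map $H_\pi\ni A\mapsto A\vert_{W^\perp}$, and that $L^\perp\tens{\mathds Z}\mathds Q=P_{W^\perp}(L_\pi)\tens{\mathds Z}\mathds Q$. Because $W$ is $H_\pi$-invariant and $H_\pi$ acts orthogonally, $W^\perp$ is also $H_\pi$-invariant and the orthogonal projection $P_{W^\perp}\colon\mathds R^n\to W^\perp$ is $H_\pi$-equivariant; restricting it to $L_\pi\tens{\mathds Z}\mathds Q$ yields a surjective $H_\pi$-equivariant $\mathds Q$-linear map onto $L^\perp\tens{\mathds Z}\mathds Q$.

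Next, I would show that the kernel of this restricted projection is exactly $V$. The inclusion $V\subseteq\Ker$ is immediate from $V\subseteq W$. For the reverse inclusion, the key observation is that the $\mathds Q$-direct sum $L_\pi\tens{\mathds Z}\mathds Q=V\oplus V'$ remains direct upon extension of scalars to $\mathds R$, giving $\mathds R^n=W\oplus(V'\tens{\mathds Q}\mathds R)$; hence any element of $L_\pi\tens{\mathds Z}\mathds Q$ lying in $W$ already lies in $V$. This produces an $H_\pi$-module isomorphism $L^\perp\tens{\mathds Z}\mathds Q\cong V'$.

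Finally, since the kernel of the surjection $H_\pi\to H^\perp$ acts trivially on $L^\perp\tens{\mathds Z}\mathds Q$, the $H_\pi$-action on $V'\cong L^\perp\tens{\mathds Z}\mathds Q$ factors through $H^\perp$; consequently a $\mathds Q$-subspace of $V'$ is $H_\pi$-invariant if and only if it is $H^\perp$-invariant, and two such subspaces are $H_\pi$-isomorphic if and only if they are $H^\perp$-isomorphic. The explicit decomposition of $V'$ given above consists of $a_j-b_j$ pairwise non-equivalent summands of dimension $d_j$ for each $j=1,\ldots,k$, and reordering these dimensions non-decreasingly yields the claimed i-sequence. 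I expect the main subtlety to be the kernel computation, which hinges on the stability of $\mathds Q$-direct sum decompositions under scalar extension to $\mathds R$; everything else reduces to $H_\pi$-equivariance and the factorization of the action through $H^\perp$.
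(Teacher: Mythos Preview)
Your proposal is correct and follows essentially the same route as the paper: both identify $L^\perp\tens{\mathds Z}\mathds Q$ with $V'$ via the $H_\pi$-equivariant projection $P_{W^\perp}$ (the paper restricts $P_{W^\perp}$ to $V'$ and asserts it is an isomorphism of $H_\pi$-modules onto $P_{W^\perp}(L_\pi)\tens{\mathds Z}\mathds Q$, while you compute the kernel of $P_{W^\perp}$ on all of $L_\pi\tens{\mathds Z}\mathds Q$ and find it equals $V$; these are equivalent), and both invoke Corollary~\ref{thm:holattcollapse}. Your explicit remark that the $H_\pi$-action on $L^\perp\tens{\mathds Z}\mathds Q$ factors through $H^\perp$ is a detail the paper leaves implicit.

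One minor slip in wording: the summands $V_{b_j+1}^{(j)},\ldots,V_{a_j}^{(j)}$ within a fixed isotypic block are mutually \emph{equivalent}, not non-equivalent; only summands with different superscripts $j$ are inequivalent. This does not affect the conclusion, since by Definition~\ref{thm:defisequence} the i-sequence records only the multiset of dimensions in an irreducible decomposition, and that multiset is exactly $(a_j-b_j)$ copies of $d_j$ for each $j$.
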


\begin{proof}
The restriction of the orthogonal projection 
\begin{equation*}
P_{W^\perp}\colon V_{b_1+1}^{(1)} \oplus \cdots V_{a_1}^{(1)} \oplus\ldots\oplus V_{b_k+1}^{(k)}\oplus\cdots\oplus V_{a_k}^{(k)}
\longrightarrow P_{W^\perp}(L_\pi)\otimes\mathds Q
\end{equation*}
is an isomorphism of $H_\pi$-modules. Using Corollary~\ref{thm:holattcollapse}, it is easy to see that the image of each $V_i^{(j)}$ in this decomposition corresponds to an irreducible subspace of the rational holonomy representation of $M_\pi/\mathcal F_W=W^\perp/\pi^\perp$.
\end{proof}

In fact, Lemma~\ref{thm:collapseafew} also shows that any subsequence of the i-sequence of the rational holonomy representation of $M_\pi$ is the i-sequence of the rational holonomy of some collapse of $M_\pi$.
With this, we are finally ready to prove the following:

\begin{proposition}\label{thm:existemncenontrivialcollapse2}
If the i-sequence $i_\pi$ of the rational holonomy representation of $M_\pi$ is not of the form $(k,k)$, then $M_\pi$ admits at least two nontrivial collapsed limits that are not affinely equivalent.
\end{proposition}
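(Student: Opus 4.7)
The plan is to exploit the i-sequence of the rational holonomy representation as an affine invariant of flat orbifolds, so that two collapsed limits with distinct i-sequences are automatically not affinely equivalent. By Corollary~\ref{thm:iseqaffequivorb}, it therefore suffices to produce two nontrivial collapses of $M_\pi$ with different i-sequences. The essential input for constructing these is Lemma~\ref{thm:collapseafew}: for every tuple $(b_1,\ldots,b_k)$ with $0\le b_j\le a_j$, there exists an $H_\pi$-invariant, $L_\pi$-generated subspace $W\subset\R^n$ such that the collapse $M_\pi/\mathcal F_W$ has i-sequence consisting of $a_j-b_j$ copies of $d_j$ for each $j$. Such a collapse is nontrivial in that $W\ne 0$ precisely when $(b_j)\ne(0,\ldots,0)$, and yields a positive-dimensional limit $W\ne\R^n$ precisely when some $b_j<a_j$.

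I will then split into cases according to the length $s=a_1+\cdots+a_k$ of $i_\pi$. If $s=2$, then either $k=1,a_1=2$, which forces $i_\pi=(d_1,d_1)$ and is the excluded form, or $k=2,a_1=a_2=1$, in which case the hypothesis forces $d_1\ne d_2$. The two nontrivial collapses coming from $(b_1,b_2)=(1,0)$ and $(0,1)$ then have i-sequences $(d_2)$ and $(d_1)$, which differ.  If instead $s\ge 3$, I plan to construct two collapses whose i-sequence \emph{lengths} themselves differ, namely $s-1$ (realized by any choice with $\sum b_j=1$) and $s-2$ (realized by any choice with $\sum b_j=2$). The latter is always available: either some $a_{j_0}\ge 2$, so I set $b_{j_0}=2$, or else all $a_j=1$, in which case $k=s\ge 3$ allows me to set two distinct $b_j$'s equal to $1$. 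In all constructed cases one has $1\le\sum b_j\le s-1$, ensuring $0\ne W\ne\R^n$.

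I do not anticipate any serious obstacle, as the substantive content is already packaged in Lemma~\ref{thm:collapseafew} and Corollary~\ref{thm:iseqaffequivorb}, after which the argument reduces to a short combinatorial case analysis on $s$. The only point requiring mild care is recognizing that the form $(k,k)$ is precisely the configuration that obstructs the $s=2$ case, while for $s\ge 3$ there is always enough room to vary the collapse-length; both observations are immediate from Lemma~\ref{thm:collapseafew}.
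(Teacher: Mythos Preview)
Your proposal is correct and follows essentially the same approach as the paper: both rely on Lemma~\ref{thm:collapseafew} to realize any subsequence of $i_\pi$ as the i-sequence of a collapse, and on Corollary~\ref{thm:iseqaffequivorb} to conclude that distinct i-sequences yield non-affinely-equivalent orbifolds. The paper's proof simply asserts in one line that two distinct nontrivial subsequences of $i_\pi$ exist whenever $i_\pi\ne(k,k)$, whereas you spell out the elementary case analysis on the length $s$ that justifies this; the content is the same.
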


\begin{proof}
When $i_\pi$ is not of the form $(k,k)$, then one can find two distinct and nontrivial subsequences of $i_\pi$. By Lemma~\ref{thm:collapseafew}, such subsequences correspond to nontrivial flat collapses of $M_\pi$ that are not affinely equivalent, cf.~Corollary~\ref{thm:iseqaffequivorb}.
\end{proof}

In particular, Theorem~\ref{mainthm:C} stated in the Introduction follows directly from Proposition~\ref{thm:existemncenontrivialcollapse2}, since the sum of all the elements of the i-sequence of the rational holonomy representation is equal to the dimension $n$ of the flat manifold $M_\pi$. 

\begin{remark}
Note that if the $H_\pi$-representation on $W^\perp$ is irreducible, then so is the holonomy representation of the collapsed limit $M_\pi/\mathcal F_W$, which hence is not smooth, see Theorem~\ref{thm:hissszczepa91} and Corollary~\ref{thm:holattcollapse}. In particular, this implies that the two collapsed limits in Proposition~\ref{thm:existemncenontrivialcollapse2} can be chosen to be nonsmooth flat orbifolds.
\end{remark}


\begin{thebibliography}{BDP18}

\bibitem[AR17]{alexrad}
{\sc M.~M. Alexandrino and M.~Radeschi}.
\newblock {\em Closure of singular foliations: the proof of {M}olino's
  conjecture}.
\newblock Compos. Math., 153 (2017), 2577--2590.

\bibitem[BDP18]{BetDerPic2017}
{\sc R.~G. Bettiol, A.~Derdzinski, and P.~Piccione}.
\newblock {\em Teichm\"{u}ller theory and collapse of flat manifolds}.
\newblock Ann. Mat. Pura Appl. (4), 197 (2018), 1247--1268.

\bibitem[Bie11]{bieberbach}
{\sc L.~Bieberbach}.
\newblock {\em \"{U}ber die {B}ewegungsgruppen der {E}uklidischen {R}\"aume,
  {I}}.
\newblock Math. Ann., 70 (1911), 297--336.

\bibitem[BP18]{BetPic2016}
{\sc R.~G. Bettiol and P.~Piccione}.
\newblock {\em Infinitely many solutions to the {Y}amabe problem on noncompact
  manifolds}.
\newblock Ann. Inst. Fourier (Grenoble), 68 (2018), 589--609.

\bibitem[CG86]{cheeger-gromov1}
{\sc J.~Cheeger and M.~Gromov}.
\newblock {\em Collapsing {R}iemannian manifolds while keeping their curvature
  bounded. {I}}.
\newblock J. Differential Geom., 23 (1986), 309--346.

\bibitem[CG90]{cheeger-gromov2}
{\sc J.~Cheeger and M.~Gromov}.
\newblock {\em Collapsing {R}iemannian manifolds while keeping their curvature
  bounded. {II}}.
\newblock J. Differential Geom., 32 (1990), 269--298.

\bibitem[Cha86]{charlap}
{\sc L.~S. Charlap}.
\newblock {\em Bieberbach groups and flat manifolds}.
\newblock Universitext. Springer-Verlag, New York, 1986.

\bibitem[DP]{derdzinski-piccione}
{\sc A.~Derdzinski and P.~Piccione}.
\newblock {\em Flat manifolds and reducibility}.
\newblock Preprint.
\newblock
  \htmladdnormallink{arXiv:1903.10479}{http://arxiv.org/abs/1903.10479}.

\bibitem[Fuk87]{fukaya1}
{\sc K.~Fukaya}.
\newblock {\em Collapsing {R}iemannian manifolds to ones of lower dimensions}.
\newblock J. Differential Geom., 25 (1987), 139--156.

\bibitem[Fuk88]{fukaya3}
{\sc K.~Fukaya}.
\newblock {\em A boundary of the set of the {R}iemannian manifolds with bounded
  curvatures and diameters}.
\newblock J. Differential Geom., 28 (1988), 1--21.

\bibitem[Fuk89]{fukaya2}
{\sc K.~Fukaya}.
\newblock {\em Collapsing {R}iemannian manifolds to ones with lower dimension.
  {II}}.
\newblock J. Math. Soc. Japan, 41 (1989), 333--356.

\bibitem[Fuk06]{fukaya}
{\sc K.~Fukaya}.
\newblock Metric {R}iemannian geometry.
\newblock In {\em Handbook of differential geometry. {V}ol. {II}}, 189--313.
  Elsevier/North-Holland, Amsterdam, 2006.

\bibitem[HS91]{HisSzc91}
{\sc G.~Hiss and A.~Szczepa{\'n}ski}.
\newblock {\em On torsion free crystallographic groups}.
\newblock J. Pure Appl. Algebra, 74 (1991), 39--56.

\bibitem[Lut]{lutowski18}
{\sc R.~Lutowski}.
\newblock {\em Flat manifolds with homogeneous holonomy representation}.
\newblock Preprint.
\newblock
  \htmladdnormallink{arXiv:1803.07177}{http://arxiv.org/abs/1803.07177}.

\bibitem[Mol88]{molino}
{\sc P.~Molino}.
\newblock {\em Riemannian foliations}, vol.~73 of Progress in Mathematics.
\newblock Birkh\"{a}user Boston, Inc., Boston, MA, 1988.
\newblock Translated from the French by Grant Cairns, With appendices by
  Cairns, Y. Carri\`ere, \'{E}. Ghys, E. Salem and V. Sergiescu.

\bibitem[Rad17]{radeschi}
{\sc M.~Radeschi}.
\newblock {\em Lecture notes on singular riemannian foliations}.
\newblock Unpublished notes,  (2017).
\newblock Available at:
  \htmladdnormallink{https://www.marcoradeschi.com/s/SRF-Lecture-Notes.pdf}{https://www.marcoradeschi.com/s/SRF-Lecture-Notes.pdf}.

\bibitem[Szc12]{szczepa-book}
{\sc A.~Szczepa{\'n}ski}.
\newblock {\em Geometry of crystallographic groups}, vol.~4 of Algebra and
  Discrete Mathematics.
\newblock World Scientific Publishing Co. Pte. Ltd., Hackensack, NJ, 2012.

\bibitem[Wol11]{Wolf_book}
{\sc J.~A. Wolf}.
\newblock {\em Spaces of constant curvature}.
\newblock AMS Chelsea Publishing, Providence, RI, sixth edition, 2011.

\end{thebibliography}
\end{document}